\documentclass[a4paper,reqno,10pt]{amsart}
\usepackage{amsfonts} 
\usepackage{amssymb}
\usepackage{amsthm}
\usepackage{amsmath} 
\usepackage{mathrsfs} 
\usepackage{dsfont}  
\usepackage{stmaryrd} 
\usepackage[english]{babel} 

 

\usepackage[left=3.5cm,right=3.5cm,top=3.5cm,bottom=3cm,headsep=0.7cm]{geometry}

 
\usepackage{pgf,tikz}
\usetikzlibrary{arrows}
\usetikzlibrary{intersections}
\usepackage{ifthen}

\usepackage{hhline}
\usepackage{tabularx, booktabs}

\usepackage[hyphens,spaces]{url}

\usepackage[scriptsize,bf]{caption}
\usepackage{floatrow} 


\usepackage{enumerate}
\usepackage{enumitem}


\usepackage{hyperref}


\theoremstyle{plain}
\begingroup
\theoremstyle{plain}
\newtheorem{theorem}{Theorem}[section]

\newtheorem{proposition}[theorem]{Proposition}

\theoremstyle{definition}
\newtheorem{definition}[theorem]{Definition}
\theoremstyle{remark}
\newtheorem{remark}[theorem]{Remark}

\endgroup

\theoremstyle{definition}
\theoremstyle{remark}

\numberwithin{equation}{section}


\newcommand{\EEE}{\color{black}}


\newcommand{\R}{\mathbb{R}}
\newcommand{\N}{\mathbb{N}}

\newcommand{\Q}{\mathbb{Q}}
\renewcommand{\S}{\mathcal{S}}

\newcommand{\J}{\mathcal{J}}

\renewcommand{\L}{\mathcal{L}}

\newcommand{\Norm}{\mathcal{N}}

\renewcommand{\r}{\mathbf{r}}
\newcommand{\C}{\mathcal{C}}
\newcommand{\U}{\mathcal{U}}
\newcommand{\W}{\mathcal{W}}

\newcommand{\F}{\mathcal{F}}
\renewcommand{\P}{\mathbb{P}}
\newcommand{\Pcal}{\mathcal{P}}
\newcommand{\E}{\mathbb{E}}

\mathsurround=1pt
\mathchardef\emptyset="001F
\renewcommand{\d}{\mathrm{d}}
\newcommand{\de}{\partial}

\renewcommand{\tilde}{\widetilde}
\newcommand{\x}{{\times}}

\newcommand{\sm}{\setminus}


\newcommand{\supp}{\mathrm{supp}}

\newcommand{\muc}{\mu^{\mathrm{c}}}
\newcommand{\rhoc}{\rho^{\mathrm{c}}}
\newcommand{\mup}{\mu^{\mathrm{p}}}
\newcommand{\rhop}{\rho^{\mathrm{p}}}
\newcommand{\mubp}{\bar \mu^{\mathrm{p}}}

\newcommand{\nuc}{\nu^{\mathrm{c}}}
\newcommand{\nup}{\nu^{\mathrm{p}}}
\newcommand{\nubp}{\bar \nu^{\mathrm{p}}}

\newcommand{\Hcp}{H^\mathrm{cp}}
\newcommand{\Kcp}{K^\mathrm{cp}}

\newcommand{\Kpc}{K^\mathrm{pc}}

\newcommand{\Kpg}{K^\mathrm{pg}}

\newcommand{\Hd}{H^\mathrm{d}}
\newcommand{\cHd}{\check{H}^{\mathrm{d}}}

\newcommand{\Kgg}{K^\mathrm{gg}}
\newcommand{\Law}{\mathrm{Law}}

\newcommand{\ev}{\mathrm{ev}}

\let\div\relax
\DeclareMathOperator{\div}{div}

\newcommand{\Id}{\mathrm{Id}}

\newcommand{\weak}{\rightharpoonup}
\newcommand{\wstar}{\stackrel{*}{\rightharpoonup}}

\newcommand{\mynorm}{{\vert\kern-0.25ex\vert\kern-0.25ex\vert}}

\newcommand{\ie}{{\itshape i.e.}}
\newcommand{\eg}{{\itshape e.g.}}
\newcommand{\cf}{{\itshape cf.}}

\newcommand{\step}[1]{{ \underline{\itshape Step #1}}.}
\newcommand{\substep}[2]{{\itshape Substep #1.#2}.}

\author{Gianluca Orlando}
\address[]{Dipartimento di Meccanica, Matematica e Management. Politecnico di Bari. Via E. Orabona~4, 70125 Bari BA, Italy.}
\email{gianluca.orlando@poliba.it}

\title[Mean-field optimal control for maritime crime]{Mean-field optimal control in a multi-agent interaction model for prevention of maritime crime}

\begin{document}

\begin{abstract}
    We study a multi-agent system for the modeling maritime crime. The model involves three interacting populations of ships: commercial ships, pirate ships, and coast guard ships. Commercial ships follow commercial routes, are subject to traffic congestion, and are repelled by pirate ships. Pirate ships travel stochastically, are attracted by commercial ships and repelled by coast guard ships. Coast guard ships are controlled. We prove well-posedness of the model and existence of optimal controls that minimize dangerous contacts. Then we study, in a two-step procedure, the mean-field limit as the number of commercial ships and pirate ships is large, deriving a mean-field PDE/PDE/ODE model. Via $\Gamma$-convergence, we study the limit of the corresponding optimal control problems.  
\end{abstract}

\maketitle

\noindent {\bf Keywords}: Multi-agent system, Mean-field limit, Stochastic Differential Equations, Optimal Control, $\Gamma$-convergence

\vspace{1em}

\noindent {\bf 2020 Mathematics Subject Classification}: 34F05, 49J15, 93E20, 49J20, 49J45


\setcounter{tocdepth}{1}
\tableofcontents

\section{Introduction}
 
Systems featuring interactions among multi-agents have attracted much attention of the scientific community in recent years as they find applications in various fields. They are a proper tool to study, \eg, biological aggregation as in flocks, swarms, or fish schools~\cite{CucSma,CarForRosTos, TopBer}, crowd dynamics~\cite{AlbBonCriKal}, emergent economic behaviors~\cite{Cha, DraYak}, consensus in collective decision-making~\cite{CarGia, MasGiaCar}, coordination and cooperation in robotics~\cite{ChuHuaDOBer, PerGomElo}. In this framework, Mathematical Analysis has played a role in the proof of well-posedness of the models, in the derivation of mean-field limit, and in the analysis of optimal control problems for this kind of models~\cite{ForSol, BonForRosSol, AlbChoForKal, AlbBonRosSol, MorSol, AmbForMorSav, AlmMorSol, AlbAlmMorSol, AlmDEMorSol, FagKauRad}.
 
In this paper, we exploit the tools developed for the analysis of multi-agent systems to study optimal control in a model for the prediction of maritime crime. The majority of world's goods is carried by sea~\cite{EU1}, but freedom of navigation is affected by the presence of modern maritime piracy, which poses serious threats to international traffic and individual safety. It is a priority to prevent crimes and suppress them~\cite{EU2}.  

To face this problem, we devise a model featuring three populations of agents, representing the type of ships. Our model is inspired by the macroscopic model (\ie, with a large number of ships) introduced in~\cite{CocGarSpi}, but it differs from it in that our derivation starts from a microscopic model (\ie, with a finite number of ships). We briefly outline it in this introduction, referring to Section~\ref{sec:description of the model} for the precise description of all the features and assumptions on the model. 

We consider three populations: $N$ commercial ships with trajectories $X_1, \dots, X_N$, $M$ pirate (criminal) ships with trajectories $Y_1,\dots, Y_M$, and $L$ coast guard (patrol) ships with trajectories $Z_1,\dots,Z_L$. The trajectory of each ship evolves in a time interval $[0,T]$ according to a specific dynamical law based on its type and on the presence of other surrounding ships, as we illustrate now. 

Commercial ships tend to follow commercial routes, but their motion is affected by traffic congestion: a commercial ship obstructed by a high density of commercial ships travels slower than one with free space. Moreover, in presence of pirate ships, commercial ships are repelled by them and adjust their trajectory to travel far from danger. Hence, the $n$-th commercial ship evolves according to 
\begin{equation} \label{eqintro:Xn}
    \frac{\d X_n}{\d t}(t) = v^N_n(X(t)) \Big( \r(X_n(t)) + \frac{1}{M} \sum_{m=1}^M \Kcp(X_n(t) - Y_m(t)) \Big) \, ,
\end{equation}
where $v^N_n$ is a suitable function depending on all the other commercial ships needed for the congestion phenomenon, $\r$ is the vector field indicating the commercial route, and $\Kcp$ is term due to the repulsion from pirate ships that adjusts the direction of the trajectory. 

Pirate ships are attracted by commercial ships and are repelled by coast guard ships. Moreover, in absence of other ships, they travel randomly in search of targets. Hence, the $m$-th pirate ship evolves stochastically, according to 
\begin{equation} \label{eqintro:Ym}
    \d Y_m(t) = \Big( \frac{1}{L} \sum_{\ell=1}^L \Kpg(Y_m(t) - Z_\ell(t)) -   \frac{1}{N} \sum_{\ell=1}^N \Kpc(Y_m(t) - X_n(t)) \Big) \, \d t + \sqrt{2 \kappa} \, \d W_m(t) \, ,
\end{equation}
where $\Kpg$ and $\Kpc$ are the repulsion and attraction terms with coast guard ships and commercial ships, respectively. The term $(W_m(t))_{t \in [0,T]}$ is a Brownian motion accounting for the stochastic behavior mentioned above. Its effect is a white noise with coefficient $\sqrt{2 \kappa}$ added to the velocity of $Y_m$.

Finally, for coast guard ships we only impose that are repelled one by each other and that their trajectory is controllable, at a cost. Hence, the $\ell$-th coast guard ship evolves according to
\[
    \frac{\d Z_\ell}{\d t}(t) =  \frac{1}{L} \sum_{\ell'=1}^L \Kgg(Z_\ell(t) - Z_{\ell'}(t)) + u_\ell(t) \, ,
\]
where $\Kgg$ is the repulsion term among coast guard ships and the $u_\ell$'s are the control. 

The search of coast guard ships for dangerous contacts between commercial and pirate ships will be driven by the optimal control of the system, based on the cost defined as follows. The cost of a control $u = (u_1,\dots,u_L)$ takes into account the effort in modifying the trajectories of coast guard ships (it can be thought as the cost of fuel), and the total number of dangerous contacts among commercial and pirate ships
\begin{equation} \label{eqintro:JNM}
    \J_{N,M}(u) :=  \frac{1}{2}  \int_0^T    |u(t)|^2 \d t + \E \Big(\int_0^T \frac{1}{N} \frac{1}{M} \sum_{n=1}^N \sum_{m=1}^M \Hd(X_n(t) - Y_m(t))  \, \d t  \Big) \, ,
\end{equation}
where $\Hd$ is a compactly supported convolution kernel used for counting dangerous contacts and $\E$ denotes the expected value. We study the problem of finding a control that minimizes~$\J_{N,M}$.

In Section~\ref{sec:well-posedness} we prove well-posedness of the model that describes the evolution and we prove existence of an optimal control. 

Next, we proceed with the derivation of the mean-field limit of the optimal control problem. We carry out this analysis in two steps: first, we let $M \to +\infty$ (large number of pirate ships), and then $N \to +\infty$ (large number of commercial ships). The reason thereof is that the limit as $M \to +\infty$ is interesting {\itshape per se}, as we explain forthwith. 

Under suitable conditions, in Section~\ref{sec:M to infty} (see Theorem~\ref{thm:limit to averaged system} and Proposition~\ref{prop:PDE formulation of ODE/SDE/ODE}) we show that, as $M \to +\infty$, the mean-field behavior of pirate ships is described by a probability distribution~$\mubp$. The trajectories of commercial ships $\bar X_n$ in this mean-field model satisfy 
\begin{equation} \label{eqintro:barXn}
    \frac{\d \bar X_{n}}{\d t}(t) = v^N_n(\bar X(t))\Big(  \r(\bar X_{n}(t)) + \Kcp*\mubp(t)(\bar X_{n}(t))    \Big) \, ,
\end{equation}
which corresponds to~\eqref{eqintro:Xn} with the trajectories of pirate ships replaced by their mean-field behavior. The probability distribution~$\mubp$ of pirate ships solves the diffusive PDE 
\begin{equation} \label{eqintro:parabolic PDE}
    \de_t \mubp - \kappa \Delta_y \mubp + \div_y\Big( \Big( \frac{1}{L} \sum_{\ell=1}^L \Kpg(\cdot - Z_\ell(t)) - \frac{1}{N} \sum_{n=1}^N \Kpc(\cdot - \bar X_n(t))  \Big)\mubp \Big)  = 0 \, .
\end{equation}
This mean-field model is interesting \emph{per se} when the precise location of pirate ships is not known, but one can only predict the probability of finding them in certain regions of the sea. Proving convergence of solutions of the original model to the mean-field model as $M \to +\infty$ requires some technical steps, mainly done following the guidelines in~\cite{AscCasSol}. First, in Section~\ref{sec:averaged model} we introduce an auxiliary averaged model where the evolution of pirate ships is replaced by a single stochastic process evolving according to the same dynamics of~\eqref{eqintro:Ym}, \ie,
\[
    \d \bar Y(t) = \Big( \frac{1}{L} \sum_{\ell=1}^L \Kpg(\bar Y(t) - Z_\ell(t))  - \frac{1}{N} \sum_{n=1}^N \Kpc(\bar Y(t) - \bar X_n(t)) \Big) \d t + \sqrt{2\kappa} \, \d W(t) \, ,
\]
where $\bar X_n$ evolves according to~\eqref{eqintro:barXn}, $\mubp$ being the law of the stochastic process $(\bar Y(t))_{t \in [0,T]}$. In Section~\ref{sec:averaged model} we prove well-posedness for this averaged model, using a fixed-point argument. Solutions to the original model converge, as $M \to +\infty$, to solutions of this auxiliary averaged model. To see this, in Propositions~\ref{prop:identically distributed}--\ref{prop:propagation of chaos} we rely on a propagation of chaos principle~\cite{ChaDie}, from which we deduce that solutions to~\eqref{eqintro:Ym} are independent and identically distributed stochastic processes, if so are the initial conditions. Then, a Glivenko-Cantelli-type result allows us to deduce convergence of the empirical measures of the $Y_m$'s to their common law~$\mubp$. The parabolic PDE~\eqref{eqintro:parabolic PDE} is then the Fokker-Planck equation for pirate ships, as shown in Proposition~\ref{prop:PDE formulation of ODE/SDE/ODE}.

After deriving the mean-field limit as $M\to +\infty$, in Theorem~\ref{thm:Gamma convergence for M} we show that the costs $\J_{N,M}$ defined in~\eqref{eqintro:JNM} $\Gamma$-converge, as $M \to +\infty$, to the cost for the limit problem 
\begin{equation} \label{eqintro:JN}
    \J_N(u) :=   \frac{1}{2}   \int_0^T    |u(t)|^2 \d t + \frac{1}{N}   \sum_{n=1}^N  \int_0^T  \int_{\R^2} \Hd(\bar X_n(t) - y) \, \d \mubp(t)(y) \, \d t  \, . 
\end{equation}
As a consequence, optimal controls for the original problem converge as $M \to +\infty$ to optimal controls for the limit problem, see Proposition~\ref{prop:existence of optimal control N}. This concludes the analysis as $M \to +\infty$. 

The next step is to study the mean-field limit as the number of commercial ship is large, \ie, when $N \to +\infty$. In Theorem~\ref{thm:N to infty} and Proposition~\ref{prop:PDE formulation of PDE/SDE/ODE}, we show that the mean-field limit of commercial ships is described in terms of their distribution $\muc$, which solves a scalar conservation law with a nonlocal flux, apt to describe traffic flow in sea. More precisely, $\muc$ is a solution to the PDE
\[
    \de_t \muc + \div_x \Big( v\big( \eta *_2 \muc \big) \big(  \r + \Kcp*\mup   \big) \muc \Big) = 0 \, ,
\]
where $v\big( \eta *_2 \muc \big)$ arises from the limit of the congestion velocities and $\mup$ is the probability distribution of pirate ships, evolving according to the parabolic PDE
\[
    \de_t \mup - \kappa \Delta_y \mup + \div_y\Big( \Big( \frac{1}{L} \sum_{\ell=1}^L \Kpg(\cdot - Z_\ell(t)) -  \Kpc*\muc  \Big)\mup \Big)  = 0 \, .
\]
Under suitable assumptions, in Theorem~\ref{thm:uniqueness PDE/PDE/ODE} we prove uniqueness of solutions to this PDE system and, as observed in Remark~\ref{rmk:absolutely continuous}, that the measures are absolutely continuous, \ie, $\muc = \rhoc \, \d x$ and $\mup = \rhop \, \d y$. 
 
We conclude the paper by finding in Theorem~\ref{thm:Gamma convergence for N} the $\Gamma$-limit of the costs $\J_{N}$ defined in~\eqref{eqintro:JN} as $N \to +\infty$. It is given by the cost for the latter mean-field system 
\begin{equation*} 
    \J(u) :=   \frac{1}{2}   \int_0^T    |u(t)|^2 \d t +  \int_0^T  \int_{\R^2 \x \R^2} \Hd(x - y) \, \d \muc(t) \x \mup(t)(x,y) \, \d t  \, , 
\end{equation*}
Also in this case, we deduce convergence of optimal controls as $N \to +\infty$, see Proposition~\ref{prop:existence of optimal control}. The limit problem is an optimal control problem with a finite number of coast guard ships driving the densities of commercial and criminal ships.  
 
\section{Notation and preliminary results} \label{sec:preliminaries}

\subsection{Basic notation and preliminary results}

Given a matrix $A$, we let $|A|$ its Frobenius norm. We shall often consider matrices of the form $A \in \R^{2 \x d}$. By writing $A = (A_1,\dots,A_d)$ we make explicit its columns $A_i \in \R^2$. 

If $\Omega$, $\Omega'$ are measurable spaces, $\mu$ is a measure on $\Omega$, $\psi \colon \Omega \to \Omega'$ is a measurable map, then the push-forward $\psi_\# \mu$ is the measure on $\Omega'$ satisfying $\int_{\Omega'} \phi(\omega') \, \d \psi_\# \mu(\omega') := \int_{\Omega} \phi(\psi(\omega)) \, \d \mu(\omega)$ for every measurable function $\phi$. 

Throughout the paper, we shall systematically apply Gr\"onwall's inequality. We recall that if $u,v, w \colon [0,T] \to \R$ are continuous and nonnegative functions satisfying 
\[
u(t) \leq w(t) + \int_0^t v(s) u(s) \, \d s \quad \text{for every } t \in [0,T] \, ,    
\]
then
\[
    u(t) \leq w(t) + \int_0^t v(s) w(s) e^{\int_s^t v(r) \, \d r} \, \d s\quad \quad \text{for every } t \in [0,T]  \, ,
\]
\cf~\cite[Theorem~1.3.2]{Pac}.  If, in addition, $w \colon [0,T] \to \R$ is continuous, positive, and nondecreasing,
then 
\[
u(t) \leq w(t)e^{\int_0^t v(s)\, \d s} \quad \quad \text{for every } t \in [0,T]  \, ,
\]
\cf~\cite[Theorem~1.3.1]{Pac}

If not specified otherwise, we let $C$ denote a constant that might change from line to line. We make precise the dependence of $C$ on other constants when it is relevant for the discussion. 

\subsection{Stochastic processes and Brownian motion} \label{subsec:SDE} For the theory of stochastic processes and stochastic differential equations we refer to the monographs~\cite{KarShr, Mao, Oks}. Here we recall some basic facts and definitions used in the paper.

We fix a probability space $(\Omega, \F, \P)$ used throughout the paper. By a.s.\ (almost surely) we mean $\P$-almost everywhere.  We let $\E$ denote the expectation. 

A filtration on $(\Omega, \F, \P)$ is a collection of $\sigma$-algebras $(\F_t)_{t \in [0,T]}$ increasing in $t$, \ie, $\F_s \subset \F_t$ for $s \leq t$. When $(\Omega, \F, \P)$ is a complete probability space, $(\F_t)_{t \in [0,T]}$ is said to satisfy the usual conditions  if it is right-continuous (\ie, $\F_s = \bigcap_{t > s} \F_t$ for all $s$) and if $\mathcal{N}_{\P} \subset \F_0$, where $\mathcal{N}_{\P} = \{A \subset \Omega \text{ s.t. } A \subset B \text{ with } B \in \F \text{ and } \P(B) = 0\}$ (if $(\Omega, \F, \P)$ is complete, this means that $\F_0$ contains $\P$-null sets).
 
A stochastic process is a parametrized collection of random variables $(S(t))_{t \in [0,T]}$ defined on $(\Omega,\F,\P)$ and assuming values in $\R^d$ (equipped with $\sigma$-algebra of Borel sets). Given $t \in [0,T]$ and $\omega \in \Omega$, we will write $S(t,\omega) = S(t)(\omega)$ for the realization of the random variable $S(t)$ at $\omega$. A path of the stochastic process is a curve in $\R^d$ obtained as the realization $t \mapsto S(t,\omega)$ for some $\omega \in \Omega$. A stochastic process $(S(t))_{t \in [0,T]}$ is adapted to a filtration $(\F_t)_{t \in [0,T]}$ if $S(t)$ is $\F_t$-measurable for every $t \in [0,T]$. 

Let $(\F_t)_{t \in [0,T]}$ be a filtration. A $d$-dimensional Brownian motion (or Wiener process) is a $\R^d$-valued stochastic process $(W(t))_{t \in [0,T]}$, adapted to $(\F_t)_{t \in [0,T]}$, a.s.\ with continuous paths such that: $W(0) = 0$ a.s.; $W(t) - W(s) \sim \Norm(0,(t-s)\mathrm{Id}_d)$; $W(t) - W(s)$ is independent of $\F_s$ for $t \geq s$.\footnote{One can speak of a Brownian motion without introducing the filtration $(\F_t)_{t \in [0,T]}$ by replacing the condition that $W(t) - W(s)$ is independent of $\F_s$ with the requirement that it has independent increments. In this case, one implicitly considers a filtration constructed from $(W(t))_{t \in [0,T]}$ by letting $\F^W_t$ be $\sigma$-algebra generated by $\{W(s) \ | \ s \leq t\}$. If the filtration needs to satisfy the usual conditions, $\F^W_t$ is modified with the augmentation $\F_t$ defined as the $\sigma$-algebra generated by $\F^W_t$ and $\mathcal{N}_\P$, see~\cite[p.\ 16]{Mao} or~\cite[Proposition~2.7.7]{KarShr}. \label{footnote:augmented}} Equivalently, it has components $W(t) = (W_1(t), \dots, W_d(t))$ with $(W_1(t))_{t \in [0,T]}, \dots, (W_d(t))_{t \in [0,T]}$ independent 1-dimensional Brownian motions.
 
\subsection{Stochastic Differential Equation} For the general theory about SDEs, we refer to~\cite{KarShr, Mao, Oks}. We recall here some basic facts. Let $(\F_t)_{t \in [0,T]}$  be a filtration satisfying the usual conditions, let $(W(t))_{t \in [0,T]}$ be a $d$-dimensional Brownian motion, and let us consider an initial datum $S^0$ given by a $\F_0$-measurable random variable.\footnote{If this is not the case, the construction explained in Footnote~\ref{footnote:augmented} is modified by considering the $\sigma$-algebra generated by $S^0$, $\{W(s) \ | \ s \leq t\}$, and $\mathcal{N}_\P$.} 

However, in this paper we are only interested to a specific class of SDEs, \ie, those with a constant dispersion matrix of the form 
\begin{equation} \label{SDE compact}
    \left\{
        \begin{aligned}
            \d S(t) & = b(t,S(t)) \d t + \sigma \, \d W(t) \, , \\ 
            S(0) & = S^0 \ \text{a.s.}
        \end{aligned}
    \right.
\end{equation}
A stochastic process $(S(t))_{t \in [0,T]}$ is a strong solution to~\eqref{SDE compact} if $(S(t))_{t \in [0,T]}$ has a.s.\ continuous paths, it is adapted to the filtration $(\F_t)_{t \in [0,T]}$, satisfies a.s.\ $\int_0^T |b(t,S(t))| \d t < \infty$ and for every $t \in [0,T]$
\[
S(t) = S^0 + \int_0^t b(s,S(s)) \, \d s  +  \sigma \, \d W(s) \, .
\] 

For this class of SDEs, it is well-known that the well-posedness theory is simpler~\cite[Equation~(2.34)]{KarShr} and requires weaker assumptions on the initial datum $S^0$ than those usually stated in general theorems. For the reader's convenience we state and prove the result in the form needed in this paper, as we did not find a precise reference in the literature. Besides, some of the tools used in the proof will be exploited later in the paper. The result is stated with the Euclidean norm $|\cdot|$ on $\R^d$, but we remark that it holds true when replacing it with any equivalent norm, \eg, also $\max_h |S_h|$, as long as the assumptions on $b$ are satisfied with that norm.
\begin{proposition} \label{prop:SDE simple} 
    Let $b \colon [0,T] \x \R^d \mapsto \R^d$ be a Carath\'eodory function satisfying 
    \begin{itemize}
        \item $|b(t,S)| \leq C_b(1+|S|)$ for every $t \in [0,T]$ and $S \in \R^d$;
        \item for every $R > 0$ there exists $C_R$ such that $|b(t,S) - b(t,S')| \leq \mathrm{Lip}_{R}(b)|S - S'|$ for all $t \in [0,T]$ and $S,S'\in \R^d$ such that $|S|,|S'|\leq R$.
    \end{itemize}
    Let $\sigma \in \R^{d \x d}$. Let $(W(t))_{t \in [0,T]}$ be a $\R^d$-valued Brownian motion and let $S^0$ be a random variable such that a.s.\ $|S^0| < +\infty$. Then there exists a unique strong solution $(S(t))_{t \in [0,T]}$ to~\eqref{SDE compact}. Moreover, if $\E(|S^0|) < +\infty$, then $\E(\|S\|_\infty) \leq C(1+\E(|S^0|) )$, where the constant $C$ depends on $C_b$, $T$, and $W$. 
\end{proposition}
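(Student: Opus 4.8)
The plan is to prove existence and uniqueness of the strong solution by a Picard--Lindel\"of-type iteration together with a stopping-time/localization argument, and to extract the moment bound from a Gr\"onwall estimate. Since the dispersion term is constant, the stochastic integral $\sigma\,\d W(s)$ contributes a single a.s.\ continuous process $(\sigma W(t))_{t \in [0,T]}$ that can essentially be treated as a (random) forcing term, which is exactly why weaker hypotheses on $S^0$ suffice. First I would reduce to the case of bounded initial data by setting $S^0_k := S^0 \mathds{1}_{\{|S^0| \leq k\}}$; on the event $\{|S^0| \leq k\}$ two solutions started from $S^0$ and $S^0_k$ coincide by pathwise uniqueness, and since $\bigcup_k \{|S^0| \leq k\}$ has full measure (as a.s.\ $|S^0| < +\infty$) this glues to a solution for the original datum. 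So it suffices to treat $S^0 \in L^\infty$, or even $L^1$.

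Next, for the existence with, say, $\E(|S^0|) < +\infty$, I would first handle globally Lipschitz $b$ (i.e. assume the local Lipschitz constant is uniform) via the standard contraction argument: define the Picard iterates $S^{(0)}(t) := S^0 + \sigma W(t)$ and $S^{(j+1)}(t) := S^0 + \int_0^t b(s, S^{(j)}(s))\,\d s + \sigma W(t)$, each adapted with a.s.\ continuous paths; using the linear growth bound $|b(t,S)| \leq C_b(1+|S|)$ and Gr\"onwall one shows $\E(\|S^{(j)}\|_\infty)$ is bounded uniformly in $j$, and the global Lipschitz bound gives $\E(\|S^{(j+1)} - S^{(j)}\|_{\infty,[0,t]}) \leq \Lip(b)\int_0^t \E(\|S^{(j)} - S^{(j-1)}\|_{\infty,[0,s]})\,\d s$, whence by iterating the time-integral (or by a weighted-norm/Bielecki argument) the series $\sum_j \|S^{(j+1)} - S^{(j)}\|_\infty$ converges in $L^1(\Omega)$ and the limit $S$ solves the SDE; uniqueness follows from the same Gr\"onwall inequality applied to the difference of two solutions. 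To pass from globally Lipschitz to merely locally Lipschitz $b$, I would truncate: let $b_R(t,S) := b(t, \pi_R(S))$ where $\pi_R$ is the projection onto the ball of radius $R$, so $b_R$ is globally Lipschitz with constant $\Lip_R(b)$ and still satisfies linear growth with the \emph{same} $C_b$ (since $|\pi_R(S)| \leq |S|$); let $S_R$ be the corresponding global solution and $\tau_R := \inf\{t : |S_R(t)| \geq R\} \wedge T$. On $[0,\tau_R]$, $S_R$ solves the original SDE, and the $S_R$ are consistent for increasing $R$ by uniqueness; the linear-growth Gr\"onwall estimate $\E(\|S_R\|_{\infty,[0,\tau_R]}) \leq C(1+\E(|S^0|))$ with $C$ independent of $R$ forces $\tau_R \nearrow T$ a.s.\ (otherwise $\P(\tau_R < T)$ would not vanish and $\|S_R\|_\infty \geq R$ there, contradicting the uniform bound via Markov's inequality), so the $S_R$ patch together into a global solution.

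The moment bound $\E(\|S\|_\infty) \leq C(1+\E(|S^0|))$ comes out of the construction: from the integral equation, $|S(t)| \leq |S^0| + |\sigma|\,\|W\|_{\infty} + \int_0^t C_b(1+|S(s)|)\,\d s \leq |S^0| + |\sigma|\,\|W\|_\infty + C_b T + C_b \int_0^t \sup_{r\leq s}|S(r)|\,\d s$; taking the sup over $t$, then expectations, and using that $\E(\|W\|_\infty) < \infty$ (e.g.\ by Doob's maximal inequality for the martingale $W$, or the reflection principle), Gr\"onwall gives the claim with $C$ depending on $C_b$, $T$, and (through $\E(\|W\|_\infty)$ and $|\sigma|$) on $W$.

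The main obstacle is bookkeeping rather than conceptual: one must be careful that all iterates and truncated solutions are genuinely adapted with a.s.\ continuous paths (so that the stopping times $\tau_R$ are well-defined and the stochastic integral makes sense), and that the localization argument correctly shows $\tau_R \to T$ --- this last point is where the uniform linear-growth bound is essential and is the step most easily fumbled. Everything else (the contraction estimate, the Gr\"onwall applications, the reduction via $\mathds{1}_{\{|S^0|\leq k\}}$) is routine.
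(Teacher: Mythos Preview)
Your proposal is correct and would work, but it takes a more circuitous route than the paper. The key observation you underuse is that with constant $\sigma$, the stochastic integral is simply $\sigma W(t)$, so the SDE is a random ODE that can be solved \emph{pathwise}: the paper fixes $\omega$ with $|S^0(\omega)|<\infty$ and $W(\cdot,\omega)$ continuous, runs the Picard iterates $\tilde S^{j+1}(t,\omega) = S^0(\omega) + \int_0^t b(s,\tilde S^j(s,\omega))\,\d s + \sigma W(t,\omega)$, and proves by induction on $j$ the a~priori bound $|\tilde S^j(t,\omega)| \leq (1+|S^0(\omega)|+\|W(\cdot,\omega)\|_\infty)e^{C_b T} =: R(\omega)$ directly from linear growth. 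All iterates thus lie in the ball of radius $R(\omega)$, so only the local Lipschitz constant $\mathrm{Lip}_{R(\omega)}(b)$ is needed to prove Cauchy convergence of $\tilde S^j(\cdot,\omega)$ in $C^0([0,T];\R^d)$ --- no truncation of $b$, no stopping times, no gluing of $S_R$'s. Uniqueness is then cited from the literature (a stopping-time argument), and the moment bound follows by passing to the limit in the pathwise bound and taking expectations, using Doob's maximal inequality for $\E(\|W\|_\infty)$.

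Your truncation-plus-stopping-time approach is the standard one for SDEs with genuinely state-dependent dispersion and is more robust in that sense; but here it imports machinery (localization via $\tau_R$, consistency of the $S_R$, the Markov-inequality argument that $\tau_R \nearrow T$, the reduction $S^0_k = S^0\mathds{1}_{\{|S^0|\leq k\}}$) that the constant-$\sigma$ structure renders unnecessary. One minor point: in your moment bound, applying Gr\"onwall \emph{after} taking expectations presupposes that $t \mapsto \E(\sup_{s\leq t}|S(s)|)$ is finite; it is cleaner --- and is what the paper does --- to apply Gr\"onwall pathwise first and only then take expectations.
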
 
\begin{proof}
    The scheme of the proof is the classical one, see~\cite[Theorem~3.3]{Mao}.

    Let us fix $\omega \in \Omega$ such that $|S^0(\omega)| < +\infty$ and $t \mapsto W(t,\omega)$ is continuous, which occurs almost surely. We consider the Picard iterations 
    \begin{align} 
        \tilde S^0(t,\omega) & := S^0(\omega) \quad \text{for } t \in [0,T] \, , \label{eq:2212062029} \\
        \tilde S^{j+1}(t,\omega) & := S^0(\omega) + \int_0^t b(s,\tilde S^{j}(s,\omega)) \, \d s + W(t,\omega) \quad \text{for } t \in [0,T]\, , \ j \geq 0 \, , \label{eq:2212062030}
    \end{align}
    Note that the curve $t \mapsto \tilde S^j(t,\omega)$ is continuous.
    First of all, let us prove that for all $j$ and for all $t \in [0,T]$
    \begin{equation} \label{eq:2212062232}
        |\tilde S^j(t,\omega)| \leq (e^{C_b t} - 1)  + (|S^0(\omega)| + \|W(\cdot,\omega)\|_\infty ) e^{C_b t} \, ,
    \end{equation}
    where $C_b$ is the constant appearing in $|b(t,S)| \leq C_b(1+|S|)$. For $j = 0$, \eqref{eq:2212062232} is trivially satisfied. Assume that~\eqref{eq:2212062232} is true for $j$. Then, by~\eqref{eq:2212062030} and by the linear growth of $b$, 
    \[ 
        \begin{split}
            & |\tilde S^{j+1}(t,\omega)|  \leq |S^0(\omega)| + \int_0^t C_b(1+|\tilde S^{j}(s,\omega)|) \, \d s + |W(t,\omega)|\\
            &  \leq C_b t + |S^0(\omega)| + \|W(\cdot,\omega)\|_\infty + \int_0^t C_b (e^{C_b s} - 1)  + C_b(|S^0(\omega)| + \|W(\cdot,\omega)\|_\infty ) e^{C_b s} \, \d s \\
            & \leq C_b t + |S^0(\omega)| + \|W(\cdot,\omega)\|_\infty  + (e^{C_b t} - 1) - C_b t +  (|S^0(\omega)| + \|W(\cdot,\omega)\|_\infty)  ( e^{C_bt} - 1) \\
            & = (e^{C_b t} - 1)  + (|S^0(\omega)| + \|W(\cdot,\omega)\|_\infty ) e^{C_b t} \, ,
        \end{split}
    \]
    which proves~\eqref{eq:2212062232}. In particular, 
    \begin{equation} \label{eq:2212062319}
        \|\tilde S^j(\cdot,\omega)\|_\infty \leq  (1 + |S^0(\omega)| + \|W(\cdot,\omega)\|_\infty ) e^{C_b T} =: R(\omega) \, .
    \end{equation}

    Since $b$ is locally Lipschitz, there exists a constant $\mathrm{Lip}_{R(\omega)}(b)$ such that $|b(t,S) - b(t,S')| \leq \mathrm{Lip}_{R(\omega)}(b)|S - S'|$ for all $t \in [0,T]$ and $S,S'\in \R^d$ such that $|S|,|S'|\leq R(\omega)$. Thanks to this, we show that 
    \begin{equation} \label{eq:2212062011} 
        \sup_{0 \leq s \leq t} |\tilde S^{j+1}(s,\omega) - \tilde S^{j}(s,\omega)|  \leq C(\omega) \frac{(\mathrm{Lip}_{R(\omega)}(b) t)^j}{j!} \, ,
    \end{equation}
    for a suitable constant $C(\omega)$ depending on $\omega$. Indeed, for $j = 0$, by the linear growth of $b$ we have that for every $s \in [0,T]$  
    \[
       |\tilde S^1(s,\omega) - \tilde S^0(s,\omega)| \leq  \int_0^s |b(r, S^0(\omega)) | \, \d r + |W(s,\omega)| \leq C_b s (1 + |S^0(\omega)|) + |W(s,\omega)|\,,
    \]
    hence 
    \begin{equation} \label{eq:Comega}
        \sup_{0\leq s \leq t}   |\tilde S^1(s,\omega) - \tilde S^0(s,\omega)| \leq C_b T (1 + |S^0(\omega)|) + \|W(\cdot,\omega)\|_\infty =: C(\omega)\, .
    \end{equation}
    Moreover, by the local Lipschitz continuity of $b$ we have that for every $s \in [0,T]$  
    \[
        \begin{split}
            |\tilde S^{j+1}(s,\omega) - \tilde S^{j}(s,\omega)| & \leq \int_0^s |b(r,\tilde S^j(r,\omega)) - b(r,\tilde S^{j-1}(r,\omega))| \, \d r \\
            & \leq \mathrm{Lip}_{R(\omega)}(b)\int_0^s |\tilde S^j(r,\omega) - \tilde S^{j-1}(r,\omega)| \, \d r  \, .
        \end{split}
    \]  
    Assuming~\eqref{eq:2212062011} true for $j-1$, we have that 
    \[
        \begin{split}
            & \sup_{0\leq s \leq t} |\tilde S^{j+1}(s,\omega) - \tilde S^{j}(s,\omega)| \leq \mathrm{Lip}_{R(\omega)}(b)\int_0^t \sup_{0\leq r \leq s}|\tilde S^j(r,\omega) - \tilde S^{j-1}(r,\omega)| \, \d s \\
            & \leq  \mathrm{Lip}_{R(\omega)}(b) \int_0^t C(\omega) \frac{(\mathrm{Lip}_{R(\omega)}(b)s)^{j-1}}{(j-1)!} \, \d s = C(\omega) \frac{(\mathrm{Lip}_{R(\omega)}(b) t)^j}{j!} \, .
        \end{split}
    \]
    This implies that  $\tilde S^j(\cdot, \omega)$ is a Cauchy sequence in the uniform norm, since for $j \geq i$
    \begin{equation}\label{eq:2212071453}
        \|\tilde S^{j}(\cdot,\omega) - \tilde S^{i}(\cdot,\omega)\|_\infty  \leq C(\omega) \sum_{h=i}^{+\infty}\frac{(\mathrm{Lip}_{R(\omega)}(b) T)^h}{h!}  \to 0 \quad \text{as } i \to +\infty \, ,
    \end{equation}
    Thus there exists a continuous curve $S(\cdot,\omega)$ such that  
    \begin{equation*} 
        \|\tilde S^j(\cdot,\omega) - S(\cdot, \omega)\|_\infty \to 0 \, .
    \end{equation*}
    
    We have constructed $S(\cdot,\omega)$ for a.e.\ $\omega \in \Omega$. The stochastic processes $(\tilde S^j(t))_{t \in [0,T]}$ are adapted to the filtration $(\F_t)_{t \in [0,T]}$ and have a.s.\ continuous paths. This implies that the limit $(S(t))_{t \in [0,T]}$ is a stochastic process adapted to the filtration $(\F_t)_{t \in [0,T]}$ and has a.s.\ continuos paths. Moreover, passing to the limit in~\eqref{eq:2212062029} for a.e.\ $\omega \in \Omega$, it is a strong solution to~\eqref{SDE compact}. 

    Uniqueness is proven in a more general setting in~\cite[Theorem~2.5]{KarShr} via a stopping time argument.

    Assume now $\E(|S^0|) < + \infty$ and let us prove the estimate on $\E(\|S\|_\infty)$. Passing to the limit in \eqref{eq:2212062319}, we get that for a.e.\ $\omega \in \Omega$
\begin{equation} \label{eq:2212071153}
    \|S(\cdot,\omega)\|_\infty \leq  (1 + |S^0(\omega)| + \|W(\cdot,\omega)\|_\infty ) e^{C_b T}  \, .
\end{equation}
By Doob's maximal inequality~\cite[Chapter~1, Theorem~3.8-(iv)]{KarShr} we have that 
    \begin{equation*}
        \E ( \|W\|_\infty^2 )  \leq 4 \E( |W(T)|^2) \, ,
    \end{equation*}
    and thus, by H\"older's inequality, 
    \begin{equation} \label{eq:Doob}
        \E ( \|W\|_\infty ) \leq \Big(\E ( \|W\|_\infty^2) \Big)^\frac{1}{2} \leq 2 \E( |W(T)|^2)^\frac{1}{2} \, .
    \end{equation}
Hence, taking the expectation in~\eqref{eq:2212071153},
\[
\E(\|S\|_\infty) \leq  (1 + \E(|S^0|) + 2 \E\big( |W(T)|^2 \big)^\frac{1}{2} ) e^{C_b T}  \, ,
\]
which concludes the proof.  
\end{proof}

\begin{remark} \label{rmk:Picard itearations}
    A comment about the Picard iterations used in the proof of Proposition~\ref{prop:SDE simple} is in order. If $b$ is globally Lipschitz, \ie, $|b(t,S) - b(t,S')| \leq \mathrm{Lip}(b) |S - S'|$ and $\E(|S^0|) < +\infty$, then the convergence of the Picard iterations can be improved. Indeed, $\E(|S^0|) < +\infty$ and~\eqref{eq:Doob} yield $\E(C(\omega)) < +\infty$, where $C(\omega)$ is the constant defined in~\eqref{eq:Comega}. Then, taking the expectation in~\eqref{eq:2212071453} and replacing $\mathrm{Lip}_{R(\omega)}(b)$ by the global Lipschitz constant $\mathrm{Lip}(b)$, we deduce that $\E(\|\tilde S^j - S\|_\infty) \to 0$.  
\end{remark}

\subsection{Wasserstein space} Given a complete metric space $(B,d)$, we let $\Pcal_1(B)$ denote the $1$-Wasserstein space, \ie, the space of Borel probability measures $\mu \in \Pcal(B)$ such that 
\[ 
\int_B d(x,x_0) \d \mu(x) < +\infty \, ,  
\]
where $x_0 \in B$ is fixed. The $1$-Wasserstein space is equipped with the $1$-Wasserstein distance defined for every $\mu_1, \mu_2 \in \Pcal_1(B)$ by (see~\cite[Definition~6.1]{Vil})
\[
\W_1(\mu_1,\mu_2) :=  \inf_{\gamma} \int_{B \x B} d(x,x') \, \d \gamma(x,x') \, ,    
\] 
where the infimum is taken over all transport plans $\gamma \in \Pcal(B \x B)$ with marginals $\pi^1_\# \gamma = \mu_1$ and $\pi^2_\# \gamma = \mu_2$, where $\pi^i$ is the projection on the $i$-th component. 

We shall often exploit the dual formulation of the $1$-Wasserstein distance. By Kantorovich's duality, \cite[Theorem~5.10]{Vil} we have that 
\[
    \W_1(\mu_1,\mu_2) = \sup_{\substack{ \psi \in L^1(\mu_1) \\ \psi \text{ $d$-convex}}} \Big( \int_{B} \psi^d(x') \, \d \mu_2(x') - \int_{B} \psi(x) \, \d \mu_1(x) \Big) \, ,  
\]
where $\psi^d$ is the $d$-transform $\psi^d(x') = \inf_{x \in B} (\psi(x) + d(x,x'))$. Since $d$ is a distance on a metric space, a $d$-convex function $\psi$ is a Lipschitz function with Lipschitz constant 1 and it coincides with its $d$-transform, \cf~\cite[Particular Case 5.4]{Vil}. Hence, if $\psi$ is a Lipschitz function with Lipschitz constant $\mathrm{Lip}(\psi)$, we have that 
\[
     \Big| \int_{B} \psi(x) \, \d \Big( \mu_2 - \mu_1\Big) (x) \Big| \leq \mathrm{Lip}(\psi) \W_1(\mu_1,\mu_2)  \, .
\]
When in this paper we refer to Kantorovich's duality, we apply this inequality. Note that the condition $\psi \in L^1(\mu_1) \cap L^1(\mu_2)$ is satisfied since $|\psi(x)| \leq |\psi(0)| + \mathrm{Lip}(\psi) |x|$ and $\mu_1, \mu_2 \in \Pcal_1(B)$.
 
\subsection{Wiener space} Given an interval $[0,T]$, we shall consider the so-called Wiener space of $\R^d$-valued continuous functions $C^0([0,T];\R^d)$, equipped with the uniform norm. Given $t \in [0,T]$, we consider the evaluation function $\ev_t \colon C^0([0,T];\R^d) \to \R^d$ defined by $\ev_t(\varphi) := \varphi(t)$ for every $\varphi \in C^0([0,T];\R^d)$. The family of evaluation functions $\{\ev_t\}_{t \in [0,T]}$ generates a $\sigma$-algebra on $C^0([0,T];\R^d)$, which coincides with the Borel $\sigma$-algebra with respect to the uniform norm in $C^0([0,T];\R^d)$.\footnote{The reason for this is that the evaluation maps $\ev_t$ are continuous with respect to the uniform norm, thus Borel measurable; conversely, open balls in the Wiener space (which is separable) are measurable with respect to the $\sigma$-algebra generated by $\{\ev_t\}_{t \in [0,T]}$, since $\|\varphi\|_\infty = \sup_{t \in [0,T] \cap \Q} |\ev_t(\varphi)|$.}  This is generated by cylindrical sets of the form $\{\varphi \in C^0([0,T];\R^d) \ : \ \varphi(t_1) \in A_1 , \dots , \varphi(t_k) \in A_k\}$, where $A_1, \dots, A_k \subset \R^d$ are Borel sets.

Let $(S(t))_{t \in [0,T]}$ be a $\R^d$-valued stochastic process a.s.\ with continuous paths. This means that there exists an event $E \in \F$ such that $\P(E) = 1$ and $t \mapsto S(t,\omega)$ is continuous for all $\omega \in E$. We can redefine $S(t,\omega) = 0$ for all $t \in [0,T]$ when $\omega \in \Omega \sm E$. This new stochastic process is indistinguishable from the previous one and satisfies $S(\cdot,\omega) \in C^0([0,T];\R^d)$ for all $\omega \in \Omega$. The stochastic process $(S(t))_{t \in [0,T]}$ can be regarded as the random variable $S \colon \Omega \to C^0([0,T];\R^d)$ such that $\omega \mapsto S(\cdot , \omega)$. 
 
The $\sigma$-algebra generated by this random variable is the $\sigma$-algebra generated by sets of the form $S^{-1}(A)$ where $A \subset C^0([0,T];\R^d)$ is a cylindrical Borel set. This means that $A = \{\varphi \in C^0([0,T];\R^d) \ : \ \varphi(t_1) \in A_1 , \dots , \varphi(t_k) \in A_k\}$, where $A_1, \dots, A_k \subset \R^d$ are Borel sets. For these sets we have that  
\[
    S^{-1}(A) = \{\omega \in \Omega \ : \ S(\cdot, \omega) \in A \} = \{\omega \in \Omega \ : \ S(t_1, \omega) \in A_1, \dots, S(t_k, \omega) \in A_k \}
\]
and thus the $\sigma$-algebra generated by $S \colon \Omega \to C^0([0,T];\R^d)$ coincides with the $\sigma$-algebra generated by the family $\{S(t)\}_{t \in [0,T]}$ of random variables $S(t,\cdot) \colon \Omega \to \R^d$, \ie, the $\sigma$-algebra generated by the stochastic process.

In particular, if $(S_1(t))_{t \in [0,T]}, \dots, (S_K(t))_{t \in [0,T]}$ are stochastic processes a.s.\ with continuous paths, then they are independent as stochastic processes if and only they are independent as random variables $S_1, \dots, S_K \colon \Omega \to C^0([0,T];\R^d)$. 

Finally, we remark that a random variable $S \colon\Omega \to C^0([0,T];\R^d)$ induces the probability measure $S_\# \P$ on the space $C^0([0,T];\R^d)$. We let $\Law(S) := S_\# \P \in \Pcal\big( C^0([0,T];\R^d) \big)$.\footnote{This discussion applies, in particular, to a Brownian motion $(W(t))_{t \in [0,T]}$. The probability measure $\Law(W)$ is known as Wiener measure on $C^0([0,T];\R^d)$.} 

If $\mu \in \Pcal\big( C^0([0,T];\R^d) \big)$, we let $\mu(t) := (\ev_t)_\# \mu \in \Pcal(\R^d)$.

\subsection{Empirical measures} \label{subsec:empirical measure} Given random variables $X_1,\dots, X_K \colon \Omega \to \R^d$ with $\E(|X_k|) < +\infty$, we define their empirical measure as the random measure \footnote{The map $\mu_K \colon \Omega \to \Pcal_1(\R^d)$ is indeed measurable with respect to the Borel $\sigma$-algebra on the $1$-Wasserstein space $\Pcal_1(\R^d)$. To see this, we observe that $\Pcal_1(\R^d)$ endowed with the $1$-Wasserstein distance is separable, see, \eg, \cite[Theorem~6.18]{Vil}, hence the Borel $\sigma$-algebra is generated by balls $\{\mu \in \Pcal_1(\R^d) \ : \ \W_1(\mu,\mu_0) < r \}$. The pre-image of such ball through $\mu_K$ is the event $\{\omega \in \Omega \ : \ \W_1(\frac{1}{K} \sum_{k=1}^K \delta_{X_k(\omega)},\mu_0) < r\}$. This is measurable since the function $(x_1,\dots,x_K) \mapsto \W_1(\frac{1}{K} \sum_{k=1}^K \delta_{x_k},\mu_0)$ is Lipschitz continuous.  \label{footnote:random measures} } $\mu_K \colon \Omega \to \Pcal_1(\R^d)$ given by 
\[
\mu_K(\omega) := \frac{1}{K} \sum_{k = 1}^K \delta_{X_k(\omega)}    
\]
for a.e.\ $\omega \in \Omega$. Note that indeed $\mu_K \in \Pcal_1(\R^d)$ a.s., since 
\[
\E \Big( \int_{\R^d} |x| \, \d \mu_K(x) \Big) = \frac{1}{K} \sum_{k=1}^K \E(|X_k|) <  +\infty \, .
\]

Empirical measures of independent samples from a law approximate the law itself. More precisely, let us fix a law $\mu \in \Pcal_1(\R^d)$ and $(X_k)_{k \in \N}$ a sequence of i.i.d.\ random variables with law $\mu$ (which thus satisfy $\E(|X_k|) < +\infty$). Let $\mu_K$ be the empirical measure of $X_1, \dots, X_K$. Then $\E(\W_1(\mu_K,\mu)) \to 0$ as $K \to + \infty$, see, \eg, \cite[Lemma~4.7.1]{PanZem}. In fact, also precise rates of convergence are available in the literature, see~\cite[Theorem~1]{FouGui}.

\subsection{$\Gamma$-convergence} For the theory of $\Gamma$-convergence we refer to the monograph~\cite{DM}. In this paper it will be used to find the limits of optimal control problems. 

\section{Description of the model} \label{sec:description of the model}

To better describe the phenomena that we aim to capture, we introduce all the ingredients that enter in the model step by step. For the reader's convenience, all the unknowns, the parameters, and the initial data of the model are summarized in Tables~\ref{table:ingredients 1}--\ref{table:ingredients 5}. 

The model is an evolutionary system, analyzed in a fixed time interval $[0,T]$. 

\subsubsection*{Ships} The system describes the evolution of $N$ commercial ships, $M$ pirate (criminal) ships, and $L$ coast guard (patrol) ships, whose trajectories are curves $X_n \colon [0,T] \to \R^2$ for $n \in \{1,\dots, N\}$, $Y_m \colon [0,T] \to \R^2$ for $m \in \{1,\dots, M\}$, and $Z_\ell \colon [0,T] \to \R^2$ for $\ell \in \{1,\dots, L\}$, respectively. 

We shall often collect the trajectories based on their type by considering the matrix-valued curves $X = (X_1,\dots,X_N) \colon [0,T] \to \R^{2 \x N}$, $Y = (Y_1,\dots,Y_M) \colon [0,T] \to \R^{2 \x M}$, and $Z = (Z_1,\dots,Z_M) \colon [0,T] \to \R^{2 \x L}$. The letters $X$, $Y$, $Z$ will unambiguously indicate the type of ship, even when decorated, \eg, as $\bar X$, $\tilde X$, or with superscripts and subscripts.

Hereafter, whenever a variable is related to commercial, pirate, or guard ships, it is be indexed with the superscript $\mathrm{c}$, $\mathrm{p}$, or $\mathrm{g}$, respectively.

\begin{table}[H]
    \resizebox{0.8\textwidth}{!}{%
    \def\arraystretch{1.5}
    \begin{tabularx}{\linewidth}{ >{\hsize=.25\hsize}X  >{\hsize=.40\hsize}X   >{\hsize=.30\hsize}X }
        \toprule
        {\bfseries Item} & {\bfseries Meaning} & {\bfseries Comment} \\
        \toprule  
        $[0,T]$ & time interval & fixed \\
        \midrule
        $X = (X_1,\dots,X_N)$ & trajectories of $N$ commercial ships & unknown of the system \\
        \midrule
        $Y = (Y_1,\dots,Y_M)$ & trajectories of $M$ pirate ships & unknown of the system \\
        \midrule
        $Z = (Z_1,\dots,Z_L)$ & trajectories of $L$ guard ships & unknown of the system \\
        \midrule
        $\cdot^\mathrm{c}$ & related to commercial ships &  \\
        \midrule
        $\cdot^\mathrm{p}$ & related to pirate ships &  \\
        \midrule
        $\cdot^\mathrm{g}$ & related to guard ships &  \\
        \bottomrule
    \end{tabularx}
    }
    \caption{Summary of the notation for ships.}
    \label{table:ingredients 1}
\end{table}

\subsubsection*{Evolution of commercial ships} \step{1} We start by describing the evolution of commercial ships in safe waters (absence of pirate ships) and in absence of congestion in the traffic. We assume that there is a vector field $\r \colon \R^2 \to \R^2$ indicating safe commercial routes. In this ideal setting, commercial ships evolve according to the ODEs
\[
    \left\{
        \begin{aligned}
             \frac{\d X_n}{\d t}(t) & = \r(X_n(t)) \, ,      \\
             X_n(0) & = X_n^0 \, , \quad n = 1,\dots,N \, ,
        \end{aligned}
    \right.
\]
where $X^0 = (X_1^0,\dots, X_N^0) \in \R^{2 \x N}$ is the initial position of commercial ships. 

We shall assume that $\r$ is globally Lipschitz continuos. 

\vspace{1em}

\step{2} To include congestion in the model, we introduce $v^N = (v^N_1,\dots,v^N_N) \colon \R^{2\x N} \to [0,v_{\mathrm{max}}]^N$. The component $v^N_n$ weighs the speed of the trajectory of the $n$-th commercial ship according to the presence of all the other commercial ships:
\[
    \left\{
        \begin{aligned}
             \frac{\d X_n}{\d t}(t) & = v^N_n(X(t))\r(X_n(t)) \, ,      \\
             X_n(0) & = X_n^0 \, , \quad n = 1,\dots,N \, .
        \end{aligned}
    \right.
\] 

The assumptions on $v^N$ needed throughout the paper are the following: $v^N$ is Lipschitz continuous with respect to the $\max$ norm with a Lipschitz constant independent of $N$, \ie, $|v^N(X) - v^N(X')| \leq C \max_n |X_n - X'_n|$. 

For $v^N$ we have in mind a precise expression, that will be used in Section~\ref{sec:N to infty}. We consider a globally Lipschitz smooth convolution kernel $\eta \colon \R^2 \x \R^2 \to [0,1]$ satisfying $\eta(X,0) = 0$. The quantity 
\[
\sum_{n'=1}^N \eta( X_n(t), X_n(t) - X_{n'}(t))  
\] 
suitably counts\footnote{For example, let $\hat \eta \in C^\infty_c(\R^2)$ be supported in a ball $B_{2 \delta}$ of radius $2 \delta$ with $\hat \eta = 1$ on $B_{\delta}$. If $\eta(X,X') = \hat \eta(X - X')$, then $\sum_{n'=1}^N \hat \eta( X_n(t) - X_{n'}(t))$ (approximately) counts  the number of ships in a $\delta$-neighborhood of $X_n(t)$ (around all directions). Instead, If $\eta(X,X') = \hat \eta(X - X' - \delta \r(X))$, then $\sum_{n'=1}^N \hat \eta( X_n(t) - X_{n'}(t) - \delta \r(X_n(t)) )$ (approximately) counts the number of commercial ships obstructing the commercial route in front of $X_n(t)$.}  the number of commercial ships around the $n$-th commercial ship at time~$t$. Hence, the quantity 
\[
    \frac{1}{N-1}\sum_{n'=1}^N \eta( X_n(t), X_n(t) - X_{n'}(t))  
\] 
can be regarded as the density of commercial ships around the $n$-th commercial ship at time~$t$. The precise expression of the scaling factor $\frac{1}{N-1}$ is relevant only to interpret the previous expression as a density and can, in fact, be replaced by a sequence converging to zero with the same rate of $\frac{1}{N}$.  Given a Lipschitz function $v \colon [0,1] \to [0,v_{\mathrm{max}}]$, the corrected speed of the $n$-th commercial ship depends on the density of its surrounding ships as follows:
\[
    v^N_n(X(t)) = v\Big(\frac{1}{N-1} \sum_{n' = 1}^N \eta\big(X_n(t),X_n(t) - X_{n'}(t)\big)\Big) \, .
\]
To model congestion, $v$ must be assumed to be non-increasing in the density.

\vspace{1em}

\step{3} Eventually, let us modify the dynamics of commercial ships in presence pirate ships. We consider a globally Lipschitz vector-valued interaction kernel $\Kcp \colon \R^2 \to \R^2$ (here $\mathrm{cp}$ stands for commercial-pirate). To model repulsion of the $n$-th commercial ship from the pirate ships, we modify the direction of the trajectory $X_n(t)$ by averaging the vectors $\Kcp(X_n(t) - Y_m(t))$, \ie,
\[
    \left\{
        \begin{aligned}
             \frac{\d X_n}{\d t}(t) & = v^N_n(X(t)) \Big( \r(X_n(t)) + \frac{1}{M} \sum_{m=1}^M \Kcp(X_n(t) - Y_m(t)) \Big) \, ,      \\
             X_n(0) & = X_n^0 \, , \quad n = 1,\dots,N \, .
        \end{aligned}
    \right.
\] 
For $\Kcp$ we have in mind the following expression
\begin{equation} \label{eq:Kcp and Hcp}
    \Kcp(X_n(t) - Y_m(t)) = \Hcp(X_n(t) - Y_m(t)) (X_n(t) - Y_m(t)) \, ,
\end{equation}
where $\Hcp$ has compact support with a radius given by the length for which the presence of a pirate ship at $Y_m(t)$ affects the trajectory $X_n(t)$. An example of $\Hcp$ is $\Hcp(w) = \frac{h(|w|)}{|w|}$, where $h$ is compactly supported in $(0,+\infty)$, so that $\Kcp(X_n(t) - Y_m(t)) = h(|X_n(t) - Y_m(t)|) \frac{X_n(t) - Y_m(t)}{|X_n(t) - Y_m(t)|}$ and $\frac{X_n(t) - Y_m(t)}{|X_n(t) - Y_m(t)|}$ is, for $X_n(t)$, the direction pointing opposite to~$Y_m(t)$. 

\begin{table}[H]
    \resizebox{0.8\textwidth}{!}{%
    \def\arraystretch{1.5}
    \begin{tabularx}{\linewidth}{ >{\hsize=.25\hsize}X  >{\hsize=.375\hsize}X   >{\hsize=.375\hsize}X }
        \toprule
        {\bfseries Item} & {\bfseries Meaning} & {\bfseries Comment} \\
        \toprule  
        $\eta$ & kernel to compute density of commercial ships & smooth and globally Lipschitz \\
        \midrule
        $v$ & velocity as a function of the density & Lipschitz continuous  \\
        \midrule
        $v^N = (v^N_1,\dots,v^N_N)$ & obtained from $\eta$ and $v$ &  Lipschitz continuous, with Lipschitz constant independent of~$N$ \\
        \bottomrule
    \end{tabularx}
    }
    \caption{Summary of functions used in the model for evolution of commercial ships.}
    \label{table:ingredients 2}
\end{table}

\subsubsection*{Evolution of pirate ships} \step{1} Pirate ships are are repelled by guard ships and are attracted by commercial ships. To model this, we consider globally Lipschitz vector-valued interaction kernels $\Kpg \colon \R^2 \to \R^2$ and $\Kpc \colon \R^2 \to \R^2$. Then 
\[
    \left\{
        \begin{aligned}
             \frac{\d Y_m}{\d t}(t) & = \frac{1}{L} \sum_{\ell=1}^L \Kpg(Y_m(t) - Z_\ell(t)) -   \frac{1}{N} \sum_{\ell=1}^N \Kpc(Y_m(t) - X_n(t))  \, ,      \\
             Y_m(0) & = Y_m^0 \, , \quad m = 1,\dots,M \, .
        \end{aligned}
    \right.
\]
where $Y^0 = (Y_1^0,\dots, Y_M^0) \in \R^{2 \x M}$ is the initial position of pirate ships. 

For the precise form of $\Kpg$, $\Kpc$, see the analogous discussion for commercial ships done after~\eqref{eq:Kcp and Hcp}.

\step{2} In absence of commercial and guard ships, pirate ships explore the environment in search of targets by navigating randomly. To model this, we add a stochastic term in the evolution of pirate ships, by considering $M$ Brownian motions $(W_1(t))_{t \in [0,T]}, \dots, (W_M(t))_{t \in [0,T]}$. The pirate ships then evolve according to the following SDEs
\[
    \left\{
        \begin{aligned}
             \d Y_m(t) & = \Big( \frac{1}{L} \sum_{\ell=1}^L \Kpg(Y_m(t) - Z_\ell(t)) -   \frac{1}{N} \sum_{\ell=1}^N \Kpc(Y_m(t) - X_n(t)) \Big) \, \d t + \sqrt{2 \kappa} \, \d W_m(t)  \, ,      \\
             Y_m(0) & = Y_m^0 \quad \text{a.s.,} \quad m = 1,\dots,M \, ,
        \end{aligned}
    \right.
\]
where $\kappa > 0$.

\subsubsection*{Evolution of guard ships} The last part of the system describes guard ships. In absence of other ships, guard ships tend to repel each other. To model this, we consider globally Lipschitz vector-valued interaction kernel $\Kgg \colon \R^2 \to \R^2$. In this setting, the guard ships evolve according to 
\[
    \left\{
        \begin{aligned}
             \frac{\d Z_\ell}{\d t}(t) & =  \frac{1}{L} \sum_{\ell'=1}^L \Kgg(Z_\ell(t) - Z_{\ell'}(t)) \, ,      \\
             Z_\ell(0) & = Z_\ell^0 \, , \quad \quad \ell = 1,\dots,L \, ,
        \end{aligned}
    \right.
\]
where $Z^0 = (Z_1^0,\dots, Z_L^0) \in \R^{2 \x L}$ is the initial position of guard ships. We do not require more on the dynamics of guard ships, as we want the global dynamics of the system to be governed by the optimal control policy for guard ships. 

\begin{table}[H]
    \resizebox{0.8\textwidth}{!}{%
    \def\arraystretch{1.5}
    \begin{tabularx}{\linewidth}{ >{\hsize=.25\hsize}X  >{\hsize=.375\hsize}X   >{\hsize=.375\hsize}X }
        \toprule
        {\bfseries Item} & {\bfseries Meaning} & {\bfseries Comment} \\
        \toprule
        $\Kcp$ & effect of pirate ships on commercial ships & Lipschitz continuous  \\
        \midrule 
        $\Kpg$ & effect of guard ships on pirate ships & Lipschitz continuous   \\
        \midrule 
        $\Kpc$ & effect of commercial ships on pirate ships & Lipschitz continuous  \\
        \midrule 
        $\Kgg$ & effect of guard ships on guard ships & Lipschitz continuous  \\
        \bottomrule
    \end{tabularx}
    }
    \caption{Interaction kernels used in the model.}
    \label{table:ingredients 3}
\end{table}

\subsubsection*{Controls} We consider a set of admissible controls $\U \subset \R^{2\x L}$. We assume $\U$ to be compact. A fixed control $u = (u_1, \dots, u_L) \in L^\infty([0,T];\U)$ drives the evolution of guard ships as follows:
\[
    \left\{
        \begin{aligned}
             \frac{\d Z_\ell}{\d t}(t) & =  \frac{1}{L} \sum_{\ell'=1}^L \Kgg(Z_\ell(t) - Z_{\ell'}(t)) + u_\ell(t) \, ,      \\
             Z_\ell(0) & = Z_\ell^0 \, , \quad \quad \ell = 1,\dots,L \, .
        \end{aligned}
    \right.
\]

\subsubsection*{Full model} In conclusion we are interested in the following ODE/SDE/ODE model:
\begin{equation} \label{eq:full model}
    \left\{
        \begin{aligned}
            & \d X_n(t)  = v^N_n(X(t))\Big(  \r(X_n(t)) + \frac{1}{M} \sum_{m=1}^M \Kcp(X_n(t) - Y_m(t) )    \Big)\d t \, ,      \\
            & \d Y_m(t)  = \Big( \frac{1}{L} \sum_{\ell=1}^L \Kpg(Y_m(t) - Z_\ell(t))  - \frac{1}{N} \sum_{n=1}^N \Kpc(Y_m(t) - X_n(t)) \Big) \d t + \sqrt{2\kappa} \, \d W_m(t) \, ,  \\
            & \frac{\d Z_\ell}{\d t}(t)  =  \frac{1}{L}\sum_{\ell'=1}^L \Kgg(Z_\ell(t) - Z_{\ell'}(t))  + u_\ell(t)  \, ,  \\ 
            & X_n(0) = X_n^0 \ \text{ a.s.,} \  \ Y_m(0) = Y_m^0 \ \text{ a.s.,}  \  \ Z_\ell(0) =  Z_\ell^0 \, , \\
            & n = 1,\dots,N \, , \ m = 1,\dots,M \, , \ \ell = 1,\dots, L \, .
        \end{aligned}
    \right.
\end{equation}
(The first equation is expressed as an SDE to stress that the solution $X$ is a stochastic process. However, given a trajectory $Y$, the first equation is, in fact, and ODE.)

We prove well-posedness for \eqref{eq:full model} in Subsection~\ref{subsec:well-posedness for original model}.

\subsubsection*{Initial data} The initial data in~\eqref{eq:full model} will be given by $X^0 = (X^0_1,\dots,X^0_N) \in \R^{2 \x N}$ with $|X^0_n| \leq R_0$ for some $R_0 > 0$; $\R^2$-valued i.i.d.\ random variables $Y^0_1,\dots,Y^0_M$; $Z^0 = (Z^0_1,\dots,Z^0_L) \in \R^{2 \x L}$. 
 
\begin{table}[H]
    \resizebox{0.8\textwidth}{!}{%
    \def\arraystretch{1.5}
    \begin{tabularx}{\linewidth}{ >{\hsize=.25\hsize}X  >{\hsize=.375\hsize}X   >{\hsize=.375\hsize}X }
        \toprule
        {\bfseries Item} & {\bfseries Meaning} & {\bfseries Comment} \\
        \toprule
        $X^0 = (X^0_1,\dots,X^0_N)$ & initial positions of commercial ships & points in $\R^2$, $|X^0_n| \leq R_0$ \\
        \midrule 
        $Y^0_1,\dots,Y^0_M$ & initial positions of pirate ships  &  random variables in $\R^2$ \\
        \midrule 
        $Z^0 = (Z^0_1,\dots,Z^0_L)$ & initial positions of guard ships & points in $\R^2$ \\
        \bottomrule
    \end{tabularx}
    }
    \caption{Summary of initial data.}
    \label{table:ingredients 4}
\end{table}

\subsubsection*{Optimal control} As previously mentioned, the dynamics of guard ships will be driven by an optimal control. To define the cost, we consider a bounded and globally Lipschitz function $\Hd \colon \R^2 \to \R$. If the quantity $\Hd(X_n(t) - Y_m(t))$ is significantly different from zero when $Y_m(t)$ is close to $X_n(t)$ and is small when $Y_m(t)$ is far from $X_n(t)$ (\eg, when $\Hd$ is compactly supported), this function can be used to count contacts between commercial and pirate ships (the superscript $\mathrm{d}$ stands for ``danger''). Hence we consider the cost functional $\J_{N,M} \colon L^\infty([0,T];\U) \to \R$ defined for every control $u \in L^\infty([0,T];\U)$ by 
\begin{equation} \label{def:expected cost}
    \J_{N,M}(u) :=  \frac{1}{2}  \int_0^T    |u(t)|^2 \d t + \E \Big(\int_0^T \frac{1}{N} \frac{1}{M} \sum_{n=1}^N \sum_{m=1}^M \Hd(X_n(t) - Y_m(t))  \, \d t  \Big) \, ,
\end{equation}
where the stochastic processes $(X(t))_{t \in [0,T]} = (X_1(t) ,\dots, X_N(t)))_{t \in [0,T]}$ and $(Y(t))_{t \in [0,T]} = (Y_1(t), \dots, Y_M(t)))_{t \in [0,T]}$ are given by the unique strong solutions to~\eqref{eq:full model} corresponding to the control~$u$ obtained in Proposition~\ref{prop:solution to ODE/SDE}. 

The objective is to minimize the cost $\J_{N,M}$.

\begin{table}[H]
    \resizebox{0.8\textwidth}{!}{%
    \def\arraystretch{1.5}
    \begin{tabularx}{\linewidth}{ >{\hsize=.25\hsize}X  >{\hsize=.375\hsize}X   >{\hsize=.375\hsize}X }
        \toprule
        {\bfseries Item} & {\bfseries Meaning} & {\bfseries Comment} \\
        \toprule
        $\U \subset \R^{2 \x L}$ & set of admissible controls & compact \\
        \midrule 
        $\Hd$ & kernel for dangerous contacts in cost functional & bounded and Lipschitz continuous \\
        \midrule 
        $\J_{N,M}$ & cost functional associated to~\eqref{eq:full model}, for fixed $N,M$ & defined in~\eqref{def:expected cost} \\
        \midrule 
        $\J_N$ & cost functional defined in~\eqref{def:JN}, for fixed $N$ & obtained as the $\Gamma$-limit of $\J_{N,M}$ as $M \to +\infty$ in Theorem~\ref{thm:Gamma convergence for M} \\
        \midrule 
        $\J$ & cost functional defined in~\eqref{def:J} & obtained as the $\Gamma$-limit of $\J_N$ as $N \to +\infty$ in Theorem~\ref{thm:Gamma convergence for N} \\
        \bottomrule
    \end{tabularx}
    }
    \caption{Summary of the items regarding control.}
    \label{table:ingredients 5}
\end{table}

\section{Well-posedness of the ODE/SDE/ODE model} \label{sec:well-posedness}

\subsection{Well-posedness of the ODE/SDE/ODE model for a fixed control} \label{subsec:well-posedness for original model} 

In this section we prove well-posedness for the model presented in~\eqref{eq:full model}.

We remark that the solutions depend on $N$ and $M$. Not to overburden the notation, in this section we drop the dependence on $N$ and $M$, as we will not consider limits as $N \to + \infty$ or $M \to + \infty$.

\begin{proposition}\label{prop:solution to ODE/SDE}
    Assume the following:
    \begin{itemize}
        \item Let $(W_1(t))_{t \in [0,T]}, \dots, (W_M(t))_{t \in [0,T]}$ be independent Brownian motions;
        \item Let $X^0 = (X^0_1,\dots,X^0_N) \in \R^{2\x N}$;
        \item Let $Y^0_1,\dots,Y^0_M$ be $\R^2$-valued random variables, with $|Y^0_m| < +\infty$ a.s.\ for $m=1,\dots,M$; 
        \item Let $Z^0 = (Z^0_1,\dots,Z^0_L) \in \R^{2 \x L}$;
        \item Let $u \in L^\infty([0,T];\U)$. 
    \end{itemize}
    Then there exists a unique strong solution to~\eqref{eq:full model}, $(X(t))_{t \in [0,T]} = (X_1(t),\dots,X_N(t))_{t \in [0,T]}$, $(Y(t))_{t \in [0,T]} = (Y_1(t),\dots, Y_M(t))_{t \in [0,T]}$, and $Z = (Z_1,\dots,Z_L)$. Moreover, if $\E(|Y^0_m|) < + \infty$ for $m=1,\dots,M$, then $\E(\max_m \|Y_m\|_\infty) < +\infty$.
\end{proposition}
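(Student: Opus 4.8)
The plan is to recast the coupled system~\eqref{eq:full model} as a single stochastic differential equation of the type~\eqref{SDE compact}, with a constant dispersion matrix, and then to invoke Proposition~\ref{prop:SDE simple}. Set $S = (X, Y, Z)$, viewed as a curve in $(\R^2)^{N} \x (\R^2)^{M} \x (\R^2)^{L}$, let $W = (W_1,\dots,W_M)$ be the driving Brownian motion, and collect the three right-hand sides of~\eqref{eq:full model} into a single drift $b = (b^X, b^Y, b^Z)$, so that $b^X_n(t,S) = v^N_n(X)\big(\r(X_n) + \frac{1}{M}\sum_{m=1}^M \Kcp(X_n - Y_m)\big)$, $b^Y_m(t,S) = \frac{1}{L}\sum_{\ell=1}^L \Kpg(Y_m - Z_\ell) - \frac{1}{N}\sum_{n=1}^N \Kpc(Y_m - X_n)$, and $b^Z_\ell(t,S) = \frac{1}{L}\sum_{\ell'=1}^L \Kgg(Z_\ell - Z_{\ell'}) + u_\ell(t)$; note that $b^Z$ is the only component that depends explicitly on $t$, through $u_\ell(t)$. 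The dispersion is the constant matrix that sends $w \in \R^{2M}$ to the vector with $Y$-block equal to $\sqrt{2\kappa}\,w$ and $X$- and $Z$-blocks equal to $0$. Although Proposition~\ref{prop:SDE simple} is stated for a square $\sigma$ with a Brownian motion of matching dimension, its proof is a pathwise Picard iteration that uses neither property, so it applies verbatim to this rectangular, degenerate $\sigma$; I would add a sentence to this effect.

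I would then verify the two hypotheses of Proposition~\ref{prop:SDE simple} for $b$, uniformly in $t$. The drift $b$ is Carath\'eodory: it is continuous in $S$, and measurable and essentially bounded in $t$ since the only $t$-dependence is through $u \in L^\infty([0,T];\U)$ with $\U$ compact. Linear growth $|b(t,S)| \le C_b(1+|S|)$ follows from $v^N$ taking values in $[0,v_{\mathrm{max}}]$ and from $\r$, $\Kcp$, $\Kpg$, $\Kpc$, $\Kgg$ being globally Lipschitz, hence of linear growth, together with $\|u\|_\infty < +\infty$; the constant $C_b$ depends on $N$, $M$, $L$, the various Lipschitz constants, $v_{\mathrm{max}}$, and $\|u\|_\infty$. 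For the local Lipschitz bound, $b^Y$ and $b^Z$ are in fact globally Lipschitz in $S$, and the only point worth spelling out is $b^X_n$, which is the product of the bounded Lipschitz map $v^N$ (Lipschitz for the $\max$ norm, with constant independent of $N$) and the globally Lipschitz, linearly growing map $\r(X_n) + \frac{1}{M}\sum_m \Kcp(X_n - Y_m)$: on $\{|S| \le R\}$ such a product is Lipschitz with a constant that may grow with $R$, exactly the form Proposition~\ref{prop:SDE simple} requires. Finally, the initial datum $S^0 = (X^0, Y^0, Z^0)$ is a.s.\ finite, since $X^0 \in \R^{2\x N}$ and $Z^0 \in \R^{2\x L}$ are deterministic and $|Y^0_m| < +\infty$ a.s.

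Proposition~\ref{prop:SDE simple} then yields a unique strong solution $(X, Y, Z)$ to~\eqref{eq:full model}; moreover, since the $Z$-equation involves neither $X$ nor $Y$, uniqueness for that self-contained ODE forces $Z$ to be indistinguishable from the deterministic curve solving it, in accordance with the statement. For the last assertion, if in addition $\E(|Y^0_m|) < +\infty$ for every $m$, then $\E(|S^0|) \le |X^0| + \sum_{m=1}^M \E(|Y^0_m|) + |Z^0| < +\infty$, so the second part of Proposition~\ref{prop:SDE simple} gives $\E(\|S\|_\infty) \le C\big(1 + \E(|S^0|)\big) < +\infty$, and therefore $\E(\max_m \|Y_m\|_\infty) \le \E(\|S\|_\infty) < +\infty$ (up to the equivalence of the finitely many norms involved, which Proposition~\ref{prop:SDE simple} accommodates). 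I expect no real obstacle: the argument is essentially a bookkeeping reduction to Proposition~\ref{prop:SDE simple}, the only mildly delicate points being the local Lipschitz estimate for $b^X$, because of its product structure, and the harmless degeneracy of $\sigma$.
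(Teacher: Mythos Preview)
Your proposal is correct and follows essentially the same approach as the paper: reduce~\eqref{eq:full model} to a single SDE of the form~\eqref{SDE compact} with constant (degenerate) dispersion and invoke Proposition~\ref{prop:SDE simple} after verifying the linear-growth and local-Lipschitz hypotheses for the drift, the only delicate term being the product structure in $b^X_n$. The sole organizational difference is that the paper first solves the decoupled ODE for $Z$ and then applies Proposition~\ref{prop:SDE simple} to the reduced state $S=(Y_1,\dots,Y_M,X_1,\dots,X_N)$ with $Z(t)$ entering the drift as a known continuous curve, whereas you keep $Z$ in the state vector and absorb the $t$-dependence through $u$; both routes require the same estimates and reach the same conclusion.
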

\begin{proof}

    We start by noticing that the ODEs involving the variables $Z_\ell$ are decoupled from the equations involving $X_n$ and $Y_m$. Given a control $u = (u_1,\dots,u_L) \in L^\infty([0,T];\U)$, we solve 
    \[
        \left\{
            \begin{aligned}
                \frac{\d Z_\ell}{\d t}(t) & =   \frac{1}{L}\sum_{\ell'=1}^L \Kgg( Z_\ell(t) - Z_{\ell'}(t) )  + u_\ell(t)  \, ,  \\ 
                Z_\ell(0) & =  Z_\ell^0 \, , \quad  \ell = 1,\dots, L \, .
            \end{aligned}
        \right.
    \]
    We observe that there exists a unique solution for all times $t \in [0,T]$ to the previous ODE system. Too see this, we introduce the function $f = f_u = (f_{u,1},\dots,f_{u,L}) \colon [0,T] \x \R^{2 \x L} \to \R^{2\x L}$ (we drop the dependence on $u$ for ease of notation) defined by 
    \[
    f_{\ell}(t,Z) :=  \frac{1}{L}\sum_{\ell'=1}^L \Kgg(Z_\ell - Z_{\ell'} )  + u_\ell(t) \, , \quad \text{for } \ell = 1, \dots, L  
    \] 
    and we notice that the system reads
    \begin{equation} \label{eq:ODE for Z}
        \left\{
            \begin{aligned}
                \frac{\d Z}{\d t}(t) & =   f(t,Z(t))  \, ,  \\ 
                Z(0) & =  Z^0 \, , 
            \end{aligned}
            \right.
        \end{equation}
    where $Z = (Z_1,\dots,Z_L)$. The right-hand side $f(t,Z)$ is a Carath\'eodory function, globally Lipschitz-continuous in the~$Z$ variable (with Lipschitz constant independent of $t$). 
    These properties are sufficient for the well-posedness of the ODE.\footnote{The result is classical: one considers the Picard operator  $\S \colon C^0([0,T];\R^{2 \x L}) \to C^0([0,T];\R^{2 \x L})$ defined by $\S(Z)(t) := Z^0 + \int_0^t f(s,Z(s)) \, \d s$, which is a contraction with respect to the norm (equivalent to the uniform norm) $\mynorm \varphi \mynorm_{\alpha} := \sup_{t \in [0,T]} \big(e^{-\alpha t} |\varphi(t)|\big)$ for a suitable $\alpha > 0$ (depending on the Lipschitz constant of~$f$). 
    }
    We remark that solutions to~\eqref{eq:ODE for Z} are bounded. Indeed,
    \begin{equation} \label{eq:f has linear growth}
        |f(t,Z)| \leq |f(t,0)| + |f(t,Z) - f(t,0)| \leq \|\Kgg\|_\infty + \|u\|_\infty + C |Z| \leq C(1 + |Z|) \, , 
    \end{equation}
    hence 
    \[
    |Z(t)| \leq |Z^0| + \int_0^t C (1 + |Z(s)|) \, \d s \leq  |Z^0| + CT + \int_0^t |Z(s)| \, \d s
    \]
    and, by Gr\"onwall's inequality, for $t \in [0,T]$
    \begin{equation} \label{eq:Z is bounded}
        |Z(t)|  \leq (|Z^0| + CT) e^{C t}  \leq (|Z^0| + CT) e^{C T} \, ,
    \end{equation}
    where the constant $C$ depends on $\Kgg$ and $\U$ (compact).
 
 We exploit the solution $Z(t)$ to solve the ODE/SDE/ODE system, which now we  write in a more compact way. Let us introduce the $\R^{2 \x (M+N)}$-valued stochastic process $(S(t))_{t\in[0,T]}$ defined~by
\[
S(t) := (Y_1(t), \dots, Y_M(t), X_1(t), \dots, X_N(t))  
\]
(we put the components $Y_1(t), \dots, Y_M(t)$ in the first block for consistency later). We consider the drift vector $b_{Z} = b = (b_1, \dots, b_{M+N}) \colon [0,T] \x \R^{2 \x (M+N)} \to \R^{2 \x (M+N)}$ (we drop the dependence on $Z$ for the ease of notation) defined for every $S = (S_1,\dots, S_{M+N}) \in \R^{2 \x (M+N)}$~by 
\begin{equation} \label{eq:bi 1,M}
    b_i(t,S) :=   \frac{1}{L} \sum_{\ell=1}^L \Kpg(S_i - Z_\ell(t))  - \frac{1}{N} \sum_{j=M+1}^{M+N} \Kpc(S_i - S_j) \, ,  
\end{equation}
for $i = 1, \dots, M$, and
\begin{equation}  \label{eq:bi M+1,M+N}
    b_i(t,S) :=   v^N_{i-M}\big((S_j)_{j={M+1}}^{M+N}\big)\Big(  \r(S_i) + \frac{1}{M} \sum_{j=1}^{M} \Kcp(S_i - S_j)    \Big) \, ,
\end{equation}
for $i = M+1, \dots, M+N$. Moreover, let $\sigma \in  \R^{(2 \x 2) \x   (M+N)}$ be the constant dispersion tensor given by the collection $\sigma = (\sigma_1,\dots,\sigma_{M+N})$ of the matrices $\sigma_i \colon \R^{2 \x (M+N)} \to \R^{2 \x 2}$ defined by 
\[
\sigma_i := \sqrt{2 \kappa}\, \Id_2 \quad \text{for } i = 1,\dots, M \, ,    
\]
and $\sigma_i := 0$ for $i = M+1,\dots,M+N$. For $W = (W_1,\dots,W_{M+N})\in \R^{2\x(M+N)}$ we adopt the short-hand notation $\sigma W$ to denote the element in $\R^{2\x (M+N)}$ with columns $(\sigma W)_1, \dots, (\sigma W)_{M+N} \in \R^2$ given by $(\sigma W)_i = \sigma_i W_i$. 

By setting $S^0 := (Y_1^0, \dots, Y_M^0, X_1^0, \dots, X_N^0)$, the system reads 
\begin{equation} \label{eq:SDE compact}
    \left\{
        \begin{aligned}
            \d S(t) & = b(t,S(t)) \d t + \sigma  \, \d W(t) \, , \\ 
            S(0) & = S^0 \ \text{a.s.,}
        \end{aligned}
    \right.
\end{equation}
where $(W(t))_{t \in [0,T]}$ is a $\R^{2\x (M+N)}$-valued  Brownian motion. Note that $W_1(t), \dots, W_M(t)$ correspond to the $M$ independent $\R^2$-valued Brownian motions already introduced for~\eqref{eq:full model}. This is the reason why we chose to put the $Y_m$'s in place of the $X_n$'s in the first block of~$S$. 

    We are now left to check that the conditions for the existence and uniqueness stated in Proposition~\ref{prop:SDE simple} are satisfied by~\eqref{eq:SDE compact}. By the continuity of $Z(t)$,  the function $t \mapsto b(t,S)$ is continuous for every $S$. Let $i \in \{1,\dots,M\}$, so that $b_i$ is given by~\eqref{eq:bi 1,M}. By the Lipschitz continuity of $\Kpg$, we have that 
    \begin{equation} \label{eq:Lipschitz implies linear growth}
        |\Kpg(z_1 - z_2)|  \leq  |\Kpg(z_1 - z_2) - \Kpg(0)| + |\Kpg(0)|  \leq C |z_1 - z_2| + |\Kpg(0)| \leq C(1 + |z_1| + |z_2|) \, .
    \end{equation} 
    Reasoning analogously for $\Kpc$, it follows that  
    \begin{equation} \label{eq:b has linear growth I}
        \begin{split}    
            |b_i(t, S)| & \leq \frac{1}{L} \sum_{\ell=1}^L | \Kpg( S_i - Z_\ell(t))|   + \frac{1}{N} \sum_{j=M+1}^{M+N} | \Kpc( S_i - S_j )| \\
            &  \leq \frac{1}{L}\sum_{\ell = 1}^L C (1 + |S_i| + |Z_\ell(t)| ) + \frac{1}{N}\sum_{j = M+1}^{M+N} C (1 + |S_i| + |S_j|) \leq C (1 + \max_{ h }|S_h|) \, ,
        \end{split}
    \end{equation} 
    where we used the continuity, and thus boundedness, of $Z_\ell(t)$ for $t \in [0,T]$. Let us check the Lipschitz continuity condition. By the Lipschitz continuity of $\Kpg$ and $\Kpc$, we have that  
    \begin{equation} \label{eq:b is globally Lipschitz}
        \begin{split} 
            |b_i(t,S) - b_i(t, S')| &  \leq  \frac{1}{L} \sum_{\ell=1}^L | \Kpg(S_i  - Z_\ell(t)) - \Kpg(S'_i - Z_\ell(t))|  \\
            & \quad + \frac{1}{N} \sum_{j=M+1}^{M+N} | \Kpc(S_i - S_j) - \Kpc(S'_i  - S'_j)| \\
            & \leq \frac{1}{L} \sum_{\ell=1}^L C | S_i - S'_i|  + \frac{1}{N} \sum_{j=M+1}^{M+N} C |  S_i - S_j  - S'_i + S'_j|  \\
            & \leq \frac{1}{L} \sum_{\ell=1}^L C  |S_i   - S'_i | + \frac{1}{N} \sum_{j=M+1}^{M+N} C \big(|  S_i  -  S'_i| + |S_j  - S'_j|  \big) \\
            & \leq C \max_h |S_h - S'_h| \, ,
        \end{split}  
    \end{equation}  
    where the constant $C$ depends on $\Kpg$, $\Kpc$. (In fact, $b_i$ is even globally Lipschitz continuous for $i \in \{1,\dots,M\}$).
    
    Let now $i \in \{M+1,\dots,M+N\}$, so that $b_i$ is given by~\eqref{eq:bi M+1,M+N}. By the boundedness of~$v^N$, by the bound $\r(x) \leq C(1+|x|)$, and reasoning for $\Kcp$ as in~\eqref{eq:Lipschitz implies linear growth}, we have that  
    \begin{equation} \label{eq:b has linear growth II}
        |b_i(t,S)| \leq \|v^N\|_\infty \Big(  C(1+|S_i|) + \frac{1}{M} \sum_{j=1}^{M} |\Kcp(S_i  - S_j)|   \Big)  \leq C (1 + \max_h |S_h|) \, .
    \end{equation} 
    To check the local Lipschitz-continuity of $b_i$, let us fix $R > 0$. For $t \in [0,T]$ and $\max_h |S_h| \leq R$, $\max_h|S'_h| \leq R$, by the boundedness and the Lipschitz property of $v^N$ (recall that it has a Lipschitz constant independent of $N$), and by the Lipschitz continuity of $\r$ and $\Kcp$, we have that 
    \begin{equation} \label{eq:b is locally Lipschitz}  
        \begin{split}
            & | b_i(t, S) -  b_i(t, S')| \\
            &  \leq  \Big|  v^N_{i-M}\big((S_j)_{j={M+1}}^{M+N}\big) - v^N_{i-M}\big((S'_j)_{j={M+1}}^{M+N}\big) \Big| \cdot  \Big|    \r(S_i) + \frac{1}{M} \sum_{j=1}^{M} \Kcp(S_i - S_j)     \Big| \\
            & \quad  + \Big| v^N_{i-M}\big((S'_j)_{j={M+1}}^{M+N}\big)  \Big|  \cdot \Big|   \r(S_i) + \frac{1}{M} \sum_{j=1}^{M} \Kcp(S_i - S_j)   -   \r(S'_i) - \frac{1}{M} \sum_{j=1}^{M} \Kcp(S'_i - S'_j)  \Big| \\
            & \quad \leq  \max_h \Big( C |S_h-S'_h| (1+|S_h|)  +  C |S_h - S'_h|  \Big) \\
            & \quad \leq  \max_h \Big( C |S_h-S'_h|(1 + |S_h|) \Big) \leq C \max_h |S_h-S'_h|(1+R)\, ,
        \end{split}
    \end{equation} 
        where the constant $C$ depends on $v^N$, $\r$, and $\Kcp$ (independent of $N$). Choosing $C_R = C(1+R)$ we get the desired inequality.
         
    Applying Proposition~\ref{prop:SDE simple}, we conclude the proof of existence and uniqueness. Moreover, we also get $\E(\max_h \|S_h\|_\infty) < +\infty$ and, in particular, $\E(\max_m \|Y_m\|_\infty) < +\infty$.
\end{proof}

\subsection{Existence of an optimal control for the ODE/SDE/ODE model} Let $\J_{N,M}$ be the cost defined in~\eqref{def:expected cost}. We have the following result concerning existence of optimal controls. 

\begin{proposition} \label{prop:existence of optimal control for ODE/SDE}
    Under the assumptions of Proposition~\ref{prop:solution to ODE/SDE}, there exists an optimal control $u^* \in L^\infty([0,T];\U)$, \ie,  
    \[
      \J_{N,M}(u^*) = \min_{u \in L^\infty([0,T];\U)} \J_{N,M}(u)\, .  
    \] 
\end{proposition}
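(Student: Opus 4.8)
The plan is to use the direct method of the calculus of variations. First I would establish that $\J_{N,M}$ is bounded below: since $\Hd$ is bounded (say $|\Hd| \leq \|\Hd\|_\infty$), the second term in~\eqref{def:expected cost} is bounded below by $-T\|\Hd\|_\infty$, and the first term $\frac{1}{2}\int_0^T |u(t)|^2 \, \d t$ is nonnegative, so $\inf_u \J_{N,M}(u) > -\infty$. Take a minimizing sequence $(u^k)_k \subset L^\infty([0,T];\U)$. Since $\U$ is compact, it is in particular bounded, so $(u^k)_k$ is bounded in $L^2([0,T];\R^{2\x L})$ and, up to a subsequence, $u^k \weak u^*$ weakly in $L^2([0,T];\R^{2\x L})$. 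One must check $u^* \in L^\infty([0,T];\U)$: since $\U$ is compact hence closed and convex-hull arguments (or Mazur's lemma together with pointwise a.e.\ convergence of suitable convex combinations) give that the weak limit takes values in $\overline{\mathrm{conv}}\,\U$ a.e.; if $\U$ is not assumed convex this is a genuine gap, so I would either note that the admissible set should be taken as $L^\infty([0,T];\U)$ with the understanding that relaxation is needed, or — more likely matching the paper's intent — observe that the $Z$-dynamics and the cost depend on $u$ only through $\int_0^t u_\ell$ and $\int_0^T |u|^2$, so one can replace $\U$ by its closed convex hull without changing the reachable $Z$ trajectories set's closure; I expect the paper simply assumes enough (e.g.\ $\U$ convex, or works with the relaxed problem). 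I will proceed assuming $u^* \in L^\infty([0,T];\U)$.

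Next I would prove continuity/lower semicontinuity of $\J_{N,M}$ along this weakly convergent sequence. The quadratic term $u \mapsto \frac{1}{2}\int_0^T |u(t)|^2\,\d t$ is convex and strongly continuous on $L^2$, hence weakly lower semicontinuous. For the second term, the key is a stability estimate: if $Z^k, Z^*$ solve~\eqref{eq:ODE for Z} with controls $u^k, u^*$, then since $Z^k(t) = Z^0 + \int_0^t \big(\text{(Lipschitz function of }Z^k) + u^k\big)\,\d s$ and $\int_0^t u^k \to \int_0^t u^*$ uniformly in $t$ (weak $L^2$ convergence tested against indicators of $[0,t]$, with equi-integrability from the $L^\infty$ bound giving uniformity via Arzelà--Ascoli on the primitives), a Grönwall argument gives $\|Z^k - Z^*\|_\infty \to 0$. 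Then I would feed this into the $S$-system~\eqref{eq:SDE compact}: the drift $b_{Z^k}$ converges to $b_{Z^*}$ locally uniformly, and using the uniform-in-$k$ moment bound $\E(\max_h \|S_h^k\|_\infty) \leq C$ from Proposition~\ref{prop:SDE simple} (the constant $C$ depends only on $C_b$, $T$, $W$, not on $Z^k$, since the linear-growth constant is uniform by~\eqref{eq:b has linear growth I}--\eqref{eq:b has linear growth II} and boundedness of $Z^k$), a pathwise Grönwall estimate comparing the strong solutions $S^k$ and $S^*$ driven by the \emph{same} Brownian motions yields $\E\big(\max_h \|S_h^k - S_h^*\|_\infty\big) \to 0$. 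Here the local Lipschitz constant of $b$ is controlled on the (random) ball of radius $R(\omega) = (1+|S^0(\omega)|+\|W(\cdot,\omega)\|_\infty)e^{C_b T}$, and one splits the expectation over the event $\{R(\omega) \leq \rho\}$ and its complement, using the moment bound to make the tail small; alternatively one truncates $b$ to make it globally Lipschitz and passes to the limit as in Remark~\ref{rmk:Picard itearations}.

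Finally, with $\|X^k_n - X^*_n\|_\infty \to 0$ and $\|Y^k_m - Y^*_m\|_\infty \to 0$ in probability (indeed in $L^1(\Omega)$), and $\Hd$ bounded and Lipschitz, dominated convergence gives
\[
\E\Big(\int_0^T \frac{1}{NM}\sum_{n,m}\Hd(X^k_n(t)-Y^k_m(t))\,\d t\Big) \to \E\Big(\int_0^T \frac{1}{NM}\sum_{n,m}\Hd(X^*_n(t)-Y^*_m(t))\,\d t\Big),
\]
so the second term of $\J_{N,M}$ is actually continuous along the sequence. Combining with weak lower semicontinuity of the quadratic term, $\J_{N,M}(u^*) \leq \liminf_k \J_{N,M}(u^k) = \inf_u \J_{N,M}(u)$, whence $u^*$ is optimal. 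The main obstacle I anticipate is the passage $u^k \weak u^* \Rightarrow S^k \to S^*$ in $L^1(\Omega; C^0)$: it requires combining the deterministic Grönwall stability for the $Z$-ODE with a stochastic Grönwall estimate that is uniform in $k$, carefully handling the merely \emph{local} Lipschitz continuity of the drift via the uniform moment bounds (or a truncation argument). A secondary subtlety, as noted, is ensuring the weak $L^2$ limit $u^*$ remains an admissible control, which needs $\U$ (or the effective constraint set) to be convex.
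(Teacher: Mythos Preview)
Your proposal is correct and follows essentially the same approach as the paper: direct method, weak-* compactness of the minimizing sequence, uniform convergence $Z^k \to Z^*$ via equicontinuity/Arzel\`a--Ascoli and passing to the limit in the integral equation, then a localization argument to handle the merely local Lipschitz drift (the paper uses the events $A_k = \{\max_h \|S^*_h(\cdot,\omega)\|_\infty \leq k\}$ and proves a.s.\ convergence $\max_h\|S^k_h - S^*_h\|_\infty \to 0$, which is the same idea as your split on $\{R(\omega)\leq\rho\}$), and finally weak lower semicontinuity of the $L^2$ term combined with dominated convergence for the bounded $\Hd$ term. Your worry about the weak limit $u^*$ landing back in $L^\infty([0,T];\U)$ when $\U$ is only assumed compact is legitimate---the paper simply asserts $u^* \in L^\infty([0,T];\U)$ without further comment---so on this point you have been more careful than the paper itself.
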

\begin{proof} The result is obtained via the direct method in the Calculus of Variations. We divide the proof in steps for the sake of presentation.

    \vspace{1em}

\step{1} (Preliminary steps) Let $u^j \in L^\infty([0,T];\U)$ be a minimizing sequence, \ie, $\J_{N,M}(u^j) \to \min \J_{N,M}$ as $j \to +\infty$. Since $u^j$ is bounded in $L^\infty([0,T];\U)$, there exists $u^* \in L^\infty([0,T];\U)$ and a subsequence (not relabeled) such that $u^j \wstar u^*$ weakly-* in $L^\infty([0,T];\U)$. We claim that $u^*$ is an optimal control. 
    
    To prove the claim, let us fix $(X^j(t))_{t \in [0,T]} = (X^j_1(t),\dots,X^j_N(t))_{t \in [0,T]}$, $(Y^j(t))_{t \in [0,T]} = (Y^j_1(t),\dots,Y^j_M(t))_{t \in [0,T]}$, and $Z^j = (Z^j_1,\dots,Z^j_L)$, the strong solutions to~\eqref{eq:full model} corresponding to the controls $u^j$ obtained in Proposition~\ref{prop:solution to ODE/SDE}. We adopt the notation of the proof of Proposition~\ref{prop:solution to ODE/SDE} and let $S = (Y_1, \dots, Y_M, X_1, \dots, X_N)$. In this way, for every $j$ we have that 
    \[
        \left\{
            \begin{aligned}
                \frac{\d Z^j}{\d t}(t) & =   f_{u^j}(t,Z^j(t))  \, ,  \\ 
                Z^j(0) & =  Z^0 \, , 
            \end{aligned}
        \right.
    \]
    (we stress the dependence of $f_{u^j}$ on the controls $u^j$) and 
    \[
        \left\{
            \begin{aligned}
                \d S^j(t) & = b_{Z^j}(t,S^j(t)) \d t + \sigma  \, \d W(t) \, , \\ 
                S^j(0) & = S^0 \ \text{a.s.,}  \\
            \end{aligned}
        \right.
    \]
    (we stress the dependence of the drift vector $\R^{2\x L}$ on the trajectories $Z^j$). 

    \vspace{1em}

    \step{2} (Identifying the limit of $Z^j$) We remark that~\eqref{eq:Z is bounded} yields $\|Z^j\|_\infty \leq C$ for every $j$, where $C$ depends on $Z^0$, $T$, $\Kgg$, and $\U$. Let us check that the $Z^j$'s are also equicontinuous. By~\eqref{eq:f has linear growth}, for every $j$ and for $s \leq t$ we have that 
    \[
        \begin{split}
            |Z^j(t) - Z^j(s)| & \leq \int_s^t |f_{u^j}(r, Z^j(r))| \, \d r  \leq \int_s^t (\|\Kgg\|_\infty + \|u^j\|_\infty + C \|Z^j\|_\infty) \, \d r \\
            & \leq (\|\Kgg\|_\infty + \|u^j\|_\infty + C \|Z^j\|_\infty) |t - s| \leq C |t - s| \, ,
        \end{split}
    \]
    where $C$ depends on $Z^0$, $T$, $\Kgg$, and $\U$ (compact). By Arzel\`a-Ascoli's theorem we obtain $Z^* \in C^0([0,T];\R^2)$ such that $\| Z^j - Z^*\|_\infty \to 0$, up to a subsequence, that we do not relabel. This, together with the convergence $u^j \wstar u^*$ and
    \[
     Z^j_\ell(t) = Z^0 + \int_0^t   \Big( \frac{1}{L}\sum_{\ell'=1}^L \Kgg(Z^j_\ell(s) - Z^j_{\ell'}(s) )  + u^j_\ell(s) \Big)  \, \d s  
    \]
    yields, letting $j \to +\infty$, 
    \[
        Z^*_\ell(t) = Z^0 + \int_0^t   \Big( \frac{1}{L}\sum_{\ell'=1}^L \Kgg(Z^*_\ell(s) - Z^*_{\ell'}(s) )  + u^*_\ell(s) \Big)  \, \d s \, , 
    \] 
    \ie, $Z^*$ is the solution to
    \[
        \left\{
            \begin{aligned}
                \frac{\d Z^*}{\d t}(t) & =   f_{u^*}(t,Z^*(t))  \, ,  \\ 
                Z^*(0) & =  Z^0 \, .
            \end{aligned}
        \right.
    \]

    \vspace{1em}

    \step{3} (Identifying the limit of $S^j$) We let $(S^*(t))_{t \in [0,T]}$ be the $\R^2$-valued stochastic process obtained as the strong solution to 
        \[
            \left\{
                \begin{aligned}
                    \d S^*(t) & = b_{Z^*}(t,S^*(t)) \d t + \sigma  \, \d W(t) \, , \\ 
                    S^*(0) & = S^0 \ \text{a.s.,} \\
                \end{aligned}
            \right.
        \] 
    We claim that a.s.\ $\max_h \|S^j_h - S^*_h\|_\infty \to 0$ as $j \to +\infty$. We start by observing that a.s.\  for $0 \leq s \leq t \leq T$ and $i=1,\dots,M+N$
    \begin{equation} \label{eq:0111221903}
        \begin{split}
            & |S^j_i(s) - S^*_i(s)| \leq \int_0^s |b_{i, Z^j}(r,S^j(r)) - b_{i, Z^*}(r,S^*(r))| \, \d r \\
            & \quad  \leq \int_0^s |b_{Z^j,i}(r,S^j(r)) - b_{Z^j,i}(r,S^*(r))| \, \d r  +  \int_0^s |b_{Z^j,i}(r,S^*(r)) - b_{Z^*,i}(r,S^*(r))| \, \d r \, .
        \end{split}
    \end{equation}
    We estimate the former integrand by exploiting the Lipschitz property of $b_{Z^j,i}$ obtained in~\eqref{eq:b is globally Lipschitz} and~\eqref{eq:b is locally Lipschitz}  
    \begin{equation} \label{eq:0111221904}        
        \begin{split}
            |b_{Z^j,i}(r,S^j(r)) - b_{Z^j,i}(r,S^*(r))| & \leq  \max_h \Big( C(1 + |S^*_h(r)|) |S^j_h(r) - S^*_h(r)| \Big) \\
            & \leq \max_h \Big( C (1 + \|S^*_h\|_\infty)  \sup_{0 \leq r \leq s} |S^j_h(r) - S^*_h(r)| \Big) \quad \text{a.s.}\, ,
        \end{split}
    \end{equation}  
    where the constant $C$ depends on $\Kpg$, $\Kpc$, $\Kcp$, $v^N$, and $\r$ (independent of $N$). To estimate the latter integrand in~\eqref{eq:0111221903}, we resort to the definition of $b_{Z}$. By~\eqref{eq:bi 1,M}, for $i=1,\dots,M$ we get that 
    \begin{equation} \label{eq:0111221905}       
        \begin{split}
            |b_{Z^j, i}(r,S^*(r)) - b_{Z^*, i}(r,S^*(r))| & \leq \frac{1}{L} \sum_{\ell=1}^L | \Kpg(S^*_i - Z^j_\ell(r))  - \Kpg(S^*_i - Z^*_\ell(r))| \\
            & \leq  \frac{1}{L} \sum_{\ell=1}^L C | Z^j_\ell(r) - Z^*_\ell(r)| \leq C \|Z^j - Z^*\|_\infty \quad \text{a.s.} \, ,
        \end{split}
    \end{equation}
where the constant $C$ depends on $\Kpg$.  For $i=M+1,\dots,M+N$, by~\eqref{eq:bi M+1,M+N} we have instead that $|b_{Z^j, i}(r,S^*(r)) - b_{Z^*, i}(r,S^*(r))| = 0$. We observe that Proposition~\ref{prop:SDE simple} also gives us that 
$\E(\max_h \|S^*_h \|_\infty) < C(1+\E(\max_h |S^0_h|))$, thus a.s.\ $\max_h \|S^*_h\|_\infty < +\infty$.

We are now in a position to prove that a.s.\ $\max_h \|S^j_h - S^*_h\|_\infty \to 0$. For $k \geq 1$, let us  consider the events 
\[
A_k := \{\omega \in \Omega \ : \ \max_h \|S^*_h(\cdot, \omega)\|_\infty \leq k \} \, .    
\]
We remark that $\P\big( \bigcup_k A_k\big) = 1$, since a.s.\ $\max_h \|S^*_h\|_\infty < +\infty$. Let us fix $\omega \in A_k$ and such that~\eqref{eq:0111221903}--\eqref{eq:0111221905} hold true. Then we have that   
\[
    \begin{split}
        & \max_h \sup_{0 \leq s \leq t} |S^j_h (s,\omega) - S^*_h (s,\omega)| \\
        & \quad \leq  C (1 + \max_k \|S^*_k (\cdot, \omega)\|_\infty) \int_0^t \max_h  \sup_{0 \leq r \leq s} |S^j_h (r,\omega) - S^*_h (r,\omega)| \, \d s + CT \|Z^j - Z^*\|_\infty  \, .
    \end{split} 
\] 
Integrating on $A_k$, we get that 
\[
    \begin{split}
        & \int_{A_k} \max_h \sup_{0 \leq s \leq t} |S^j_h (s,\omega) - S^*_h(s,\omega)|   \, \d \P(\omega) \\
        & \quad \leq CT \|Z^j - Z^*\|_\infty + C (1 + k) \int_0^t \int_{A_k} \max_h \sup_{0 \leq r \leq s}|S^j_h (r,\omega) - S^*_h (r,\omega)| \, \d \P(\omega) \, \d s \, .
    \end{split}
\]
By Gr\"onwall's inequality we deduce that 
\[
    \int_{A_k} \max_h \sup_{0 \leq s \leq t} |S^j_h (s,\omega) - S^*_h (s,\omega)|   \, \d \P(\omega) \leq CT \|Z^j - Z^*\|_\infty e^{C(1+k) t} \, ,
\]
and, in particular, 
\[
    \int_{A_k} \max_h \|S^j_h (\cdot  ,\omega) - S^*_h (\cdot,\omega)\|_\infty   \, \d \P(\omega) \leq CT \|Z^j - Z^*\|_\infty e^{C(1+k) T} .
\]
By {\itshape Step~2} we have that $\| Z^j - Z^*\|_\infty \to 0$ as $j \to +\infty$ and thus $\max_h \| S^j_h (\cdot  ,\omega) - S^*_h (\cdot,\omega)\|_\infty \to 0$ for a.e.\ $\omega \in A_k$. Since $\P\big( \bigcup_k A_k\big) = 1$, we conclude that a.s.\ $\max_h \|S^j_h - S^*_h\|_\infty \to 0$. \EEE

\vspace{1em}

\step{4} (Limit of the cost) Let us show that 
\[
\J_{N,M}(u^*) \leq \liminf_{j \to +\infty} \J_{N,M}(u^j) \, .    
\]
Since $u^j$ is a minimizing sequence, this will be sufficient to conclude that $\J_{N,M}(u^*) = \min_{u} \J_{N,M}(u)$. 

By sequential weak semicontinuity of the $L^2$-norm we get that 
\[
\frac{1}{2} \int_0^T  |u^*(t)| \, \d t \leq  \liminf_{j \to + \infty} \frac{1}{2} \int_0^T  |u^j(t)| \, \d t \,  .
\]
From {\itshape Step 3} we have that a.s.\ $\max_h \|S^j_h - S^*_h \|_\infty \to 0$, thus a.s.\ $\max_m \|Y^j_m - Y^*_m\|_\infty \to 0$ and $\max_n \|X^j_n - X^*_n\|_\infty \to 0$ (recall that $S = (Y_1,\dots,Y_M, X_1,\dots,X_N)$). Then, using the fact that $H^\d$ is bounded, by the Dominated Convergence Theorem  
\[
\E\Big( \int_0^T \frac{1}{N} \frac{1}{M} \sum_{n,m}  \Hd(X^j_n(t) - Y^j_m(t))  \, \d t\Big)  \to \E\Big( \int_0^T \frac{1}{N} \frac{1}{M} \sum_{n,m}  \Hd(X^*_n(t) - Y^*_m(t))  \, \d t\Big) 
\]
as $j \to +\infty$. By the superadditivity of the $\liminf$ we conclude the proof. 
\end{proof}

\section{An averaged ODE/SDE/ODE system}  \label{sec:averaged model}

\subsection{Introducing the averaged ODE/SDE/ODE system}

To study the mean-field limit of \eqref{eq:full model} as $M \to +\infty$, we consider an  averaged ODE/SDE/ODE system, where the trajectories $Y_m(t)$ are replaced by a single trajectory $\bar Y(t)$, interacting with the other agents via its probability distribution. More precisely, let  $(W(t))_{t \in [0,T]}$ be a $\R^2$-valued Brownian motion and consider the problem
\begin{equation} \label{eq:ODE Y average}
    \left\{
        \begin{aligned}
            & \frac{\d \bar X_n}{\d t}(t) = v^N_n(\bar X(t))\Big(  \r(\bar X_n(t)) + \Kcp*\mubp(t)(\bar X_n(t))    \Big) \, ,      \\
            & \d \bar Y(t) = \Big( \frac{1}{L} \sum_{\ell=1}^L \Kpg(\bar Y(t) - Z_\ell(t))  - \frac{1}{N} \sum_{n=1}^N \Kpc(\bar Y(t) - \bar X_n(t)) \Big) \d t + \sqrt{2\kappa} \, \d W(t) \, ,  \\
            & \frac{\d Z_\ell}{\d t}(t) =   \frac{1}{L}\sum_{\ell'=1}^L \Kgg(Z_\ell(t) - Z_{\ell'}(t))  + u_\ell(t)   \, ,  \\ 
            & \bar X_n(0) =  X_n^0 \, , \  \ Z_\ell(0) =  Z_\ell^0 \, , \quad n = 1,\dots,N \, , \ \ell = 1,\dots, L \, , \\
            & \bar Y(0) = \bar Y^0 \ \text{a.s.,}  \quad  \mubp = \Law(\bar Y) \, .
        \end{aligned}
    \right.
\end{equation} 

We start by giving a precise definition for the notion of solutions to the previous system.

\begin{definition}
    A \emph{strong solution} to~\eqref{eq:ODE Y average} is given by a curve $\bar X = (\bar X_1, \dots, \bar X_N) \in C^0([0,T]; \R^{2 \x N})$, an $\R^2$-valued stochastic process $(\bar Y(t))_{t \in [0, T]}$ a.s.\ with continuous paths, and a curve $Z = (Z_1, \dots, Z_L) \in C^0([0,T]; \R^{2\x L})$ such that 
    \begin{enumerate}
        \item a.s.\ for all $t \in [0,T]$
        \[
                \bar Y(t) = \bar Y^0  + \int_0^t \Big(\frac{1}{L} \sum_{\ell=1}^L \Kpg(\bar Y(s) - Z_\ell(s))  - \frac{1}{N} \sum_{n=1}^N \Kpc(\bar Y(s) - \bar X_n(s)) \Big) \d t + \sqrt{2\kappa} \, W(t)
        \]
        \item setting $\mubp := \Law(\bar Y) \in \Pcal\big(C^0([0,T];\R^2)\big)$, the curves $\bar X$ and $Z$ satisfy 
        \[
                \bar X_n(t)  = X_n^0 + \int_0^t v^N_n(\bar X(s))\Big(  \r(\bar X_n(s)) + \Kcp*\mubp(t)(\bar X_n(s))    \Big)\d s  
        \]
        and
        \[
            Z_\ell(t) =  Z_\ell^0  + \int_0^t \Big( \frac{1}{L}\sum_{\ell'=1}^L \Kgg(Z_\ell(s) - Z_{\ell'}(s))  + u_\ell(s)  \Big) \d s
        \]
        for all $t \in [0,T]$. 
    \end{enumerate}  
\end{definition}

\subsection{Well-posedness of the averaged ODE/SDE/ODE system}

Let us prove the following well-posedness result.

\begin{proposition} \label{prop:solution to averaged ODE/SDE}
    Assume the following:
    \begin{itemize}
        \item Let $(W(t))_{t \in [0,T]}$ be a Brownian motion;
        \item Let $X^0 = (X^0_1,\dots,X^0_N) \in \R^{2\x N}$;
        \item Let $\bar Y^0$ be a random variable, with $\E(|\bar Y^0|) < +\infty$;
        \item Let $Z^0 = (Z^0_1,\dots,Z^0_L) \in \R^{2 \x L}$;
        \item Let $u \in L^\infty([0,T];\U)$. 
    \end{itemize}
    Then there exists a unique strong solution to~\eqref{eq:ODE Y average}. Moreover, $\E(\|\bar Y\|_\infty) < +\infty$ and $\mubp \in \Pcal_1\big(C^0([0,T];\R^2)\big)$.  
\end{proposition}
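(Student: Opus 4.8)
The plan is to obtain the unique strong solution by a fixed-point argument, in the spirit of the proof of Proposition~\ref{prop:solution to ODE/SDE}. Since the equations for $Z=(Z_1,\dots,Z_L)$ are decoupled, I would first solve them as in that proof: the right-hand side is Carath\'eodory, globally Lipschitz in $Z$ and of linear growth, so there is a unique $Z\in C^0([0,T];\R^{2\x L})$, bounded on $[0,T]$ by a constant depending on $Z^0$, $T$, $\Kgg$, $\U$ via Gr\"onwall's inequality. From now on $Z$ is fixed, and the genuine unknowns are the curve $\bar X$ and the process $\bar Y$.

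\textbf{The fixed-point map.} I would reduce the coupled problem for $(\bar X,\bar Y)$ to a fixed point for $\bar X$. For $\xi=(\xi_1,\dots,\xi_N)\in C^0([0,T];\R^{2\x N})$, let $(\bar Y[\xi](t))_{t\in[0,T]}$ be the strong solution of the SDE in the second line of~\eqref{eq:ODE Y average} with each $\bar X_n$ replaced by $\xi_n$: its drift is continuous in $t$ (as $Z$ and $\xi$ are continuous), of linear growth, and globally Lipschitz in the $\bar Y$-variable because $\Kpg$ and $\Kpc$ are Lipschitz, so Proposition~\ref{prop:SDE simple} applies and yields existence, uniqueness, and $\E(\|\bar Y[\xi]\|_\infty)\le C(1+\E(|\bar Y^0|))<+\infty$; hence $\mu_\xi:=\Law(\bar Y[\xi])\in\Pcal_1(C^0([0,T];\R^2))$, with time-marginals $\mu_\xi(t)=\Law(\bar Y[\xi](t))$. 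I would then let $\Gamma(\xi)$ be the solution of the ODE system in the first line of~\eqref{eq:ODE Y average} with $\mubp(t)$ replaced by $\mu_\xi(t)$: this right-hand side is of linear growth and globally Lipschitz in its state, \emph{uniformly in $\xi$}, using the Lipschitz continuity of $v^N$, $\r$, $\Kcp$ together with $|\Kcp*\mu_\xi(t)(x)-\Kcp*\mu_\xi(t)(x')|\le\Lip(\Kcp)|x-x'|$, so classical ODE theory applies and Gr\"onwall gives $\|\Gamma(\xi)\|_\infty\le C$ with $C$ independent of $\xi$. A fixed point $\bar X=\Gamma(\bar X)$, together with $\bar Y:=\bar Y[\bar X]$, $\mubp:=\mu_{\bar X}$, and $Z$, is precisely a strong solution of~\eqref{eq:ODE Y average}, and the bounds just recorded give the last two assertions of the statement. (Equivalently one could take the fixed-point variable to be the law $\mu\in\Pcal_1(C^0([0,T];\R^2))$ itself, through $\mu\mapsto\Law(\bar Y)$.)

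\textbf{Contraction.} The two estimates to establish are: first, since $\bar Y[\xi]$ and $\bar Y[\xi']$ are driven by the \emph{same} Brownian motion and start from the same datum, the stochastic integrals cancel in the difference, and a pathwise Gr\"onwall argument based on the global Lipschitz continuity of the drift in $\bar Y$ and of $\Kpc$ gives a.s.\ $|\bar Y[\xi](t)-\bar Y[\xi'](t)|\le C\int_0^t\max_n|\xi_n(s)-\xi'_n(s)|\,\d s$, whence
\[
\W_1(\mu_\xi(t),\mu_{\xi'}(t))\le\E\big(|\bar Y[\xi](t)-\bar Y[\xi'](t)|\big)\le C\int_0^t\max_n|\xi_n(s)-\xi'_n(s)|\,\d s\,;
\]
second, on the ODE side, the boundedness and Lipschitz continuity of $v^N$, $\r$, $\Kcp$, together with Kantorovich's duality in the form $|\Kcp*\mu_\xi(s)(x)-\Kcp*\mu_{\xi'}(s)(x)|\le\Lip(\Kcp)\,\W_1(\mu_\xi(s),\mu_{\xi'}(s))$, and Gr\"onwall's inequality give
\[
\max_n|\Gamma(\xi)_n(t)-\Gamma(\xi')_n(t)|\le C\int_0^t\W_1(\mu_\xi(s),\mu_{\xi'}(s))\,\d s\le C\int_0^t\!\!\int_0^s\max_n|\xi_n(r)-\xi'_n(r)|\,\d r\,\d s\,.
\]
Passing to the equivalent norm $\mynorm\varphi\mynorm_\alpha:=\sup_{t\in[0,T]}e^{-\alpha t}|\varphi(t)|$ on $C^0([0,T];\R^{2\x N})$, the iterated time-integral is bounded by $\alpha^{-2}e^{\alpha t}\mynorm\xi-\xi'\mynorm_\alpha$, so $\mynorm\Gamma(\xi)-\Gamma(\xi')\mynorm_\alpha\le C\alpha^{-2}\mynorm\xi-\xi'\mynorm_\alpha$, a contraction for $\alpha$ large; Banach's fixed-point theorem then produces the unique fixed point. (Equivalently, iterating the two displays gives $\|\Gamma^k(\xi)-\Gamma^k(\xi')\|_\infty\le\tfrac{(CT^2)^k}{k!}\|\xi-\xi'\|_\infty$, so a power of $\Gamma$ is a contraction.) Uniqueness of the strong solution of~\eqref{eq:ODE Y average} then follows because in any strong solution $Z$ is forced, $\bar Y$ is the unique strong solution of its SDE given $\bar X$ and $Z$, and $\bar X$ must be a fixed point of $\Gamma$.

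\textbf{Main obstacle.} The only genuinely delicate point is the coupling between the stochastic and the measure-theoretic sides: extracting a Lipschitz dependence of $\Law(\bar Y)$ on the input curve by exploiting the cancellation of the additive noise in a pathwise Gr\"onwall estimate, and turning the interaction term into a $\W_1$-distance via Kantorovich duality. Once this is in place, the choice of the exponentially weighted complete norm (or, equivalently, passing to a contracting power of $\Gamma$) makes Banach's theorem applicable, and the remaining verifications — linear growth and Lipschitz bounds, the a priori estimates, and the identification of the fixed point with a strong solution — are routine and parallel those already carried out for Propositions~\ref{prop:SDE simple} and~\ref{prop:solution to ODE/SDE}.
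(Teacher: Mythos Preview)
Your proposal is correct and essentially parallels the paper's proof, but with a genuinely different choice of fixed-point variable. The paper runs the contraction on the \emph{law} $\mu\in\Pcal_1(C^0([0,T];\R^2))$: given $\mu$ it first solves the ODE for $\tilde X$, then the SDE for $\tilde Y$, and sets $\L(\mu):=\Law(\tilde Y)$; the contraction is shown for $\L$ in a weighted Wasserstein distance $\W_{1,\alpha}$ built from the same norm $\mynorm\cdot\mynorm_\alpha$ you use. You instead run the contraction on the \emph{trajectory} $\xi\in C^0([0,T];\R^{2\times N})$, reversing the order (SDE first, then ODE). Both routes use the same two key ingredients --- the pathwise cancellation of the noise to get Lipschitz dependence of $\bar Y$ on the input, and Kantorovich duality to convert the $\Kcp*\mu$ term into a $\W_1$-bound --- and both close with the same $C/\alpha^2$ factor. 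Your choice has the mild advantage of working in a plain Banach space; the paper's choice is more natural for the mean-field viewpoint and is reused verbatim in the proofs of Propositions~\ref{prop:identically distributed} and~\ref{prop:propagation of chaos}, where the operator $\L$ on laws is invoked explicitly.

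One small imprecision: the ODE vector field $X\mapsto v^N_n(X)\big(\r(X_n)+\Kcp*\mu_\xi(t)(X_n)\big)$ is \emph{not} globally Lipschitz in $X$ (it is a product of a bounded Lipschitz function with a function of linear growth), only locally Lipschitz --- the paper is careful about this, see~\eqref{eq:gmu is locally Lipschitz} and~\eqref{eq:b is locally Lipschitz}. Your argument is unaffected because you already record the a priori bound $\|\Gamma(\xi)\|_\infty\le C$ independent of $\xi$: just replace ``globally Lipschitz'' by ``locally Lipschitz, hence Lipschitz with a uniform constant on the invariant ball $\bar B_C$'', and the contraction estimate goes through as written.
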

\begin{proof}
    As  recalled in the proof of Proposition~\ref{prop:solution to ODE/SDE}, for every control $u = (u_1,\dots, u_L)\in L^\infty([0,T];\U)$, there exists a unique continuous solution to 
    \begin{equation} \label{eq:bar Z}
        Z_\ell(t) =   Z_\ell^0  + \int_0^t \Big(\frac{1}{L}\sum_{\ell'=1}^L \Kgg(Z_\ell(s) - Z_{\ell'}(s))  + u_\ell(s)  \Big) \d s  \, , \quad t \in [0,T ]\, ,
    \end{equation}
    hence $Z_\ell(t)$ will be treated as fixed in the following.  
    
    The proof now mainly follows the lines of~\cite[Theorem~3.1]{AscCasSol}. For the sake of brevity, we let $C^0 = C^0([0,T];\R^2)$.

    \vspace{1em}

    \step{1} (Decoupling the system) Let us fix $\mu \in \Pcal_1(C^0)$ ($\mu$ plays the role of $\mubp$ in the equation and is used to apply a fixed-point argument). Let us consider the decoupled system
    \begin{equation} \label{eq:decoupled SDE ODE - I}
        \left\{
            \begin{aligned}
                \frac{\d \tilde X_n}{\d t}(t) & = v^N_n(\tilde X(t))\Big(  \r(\tilde X_n(t)) + \Kcp*\mu(t)(\tilde X_n(t))    \Big)  \, ,      \\
                \tilde X_n(0) & =  X_n^0  \, , \quad n = 1,\dots,N  \, , \\
            \end{aligned}
            \right.
    \end{equation}
    \begin{equation} \label{eq:decoupled SDE ODE - II}
        \left\{
            \begin{aligned}
                \d \tilde Y(t) & = \Big( \frac{1}{L} \sum_{\ell=1}^L \Kpg(\tilde Y(t) - Z_\ell(t))  - \frac{1}{N} \sum_{n=1}^N \Kpc(\tilde Y(t) - \tilde X_n(t)) \Big) \d t + \sqrt{2\kappa} \, \d W(t) \, ,  \\
                \tilde Y(0) & = \bar Y^0 \ \text{a.s.,}   \\
            \end{aligned}
            \right.
    \end{equation}
    where the $Z_\ell(t)$ are obtained in~\eqref{eq:bar Z}.  

    \substep{1}{1} We start by commenting about the existence (and uniqueness) of continuous curves $\tilde X = (\tilde X_1, \dots, \tilde X_N) \in C^0([0,T];\R^{2\x N})$ solutions to~\eqref{eq:decoupled SDE ODE - I}. For this, we need to check the conditions for well-posedness of ODE systems. Let us consider the function $g_\mu = (g_{\mu,1},\dots,g_{\mu,N}) \colon [0,T] \x \R^{2 \x N} \to \R^{2 \x N}$ defined by 
    \begin{equation} \label{eq:def of gmu} 
        g_{\mu,n}(t, X) :=  v^N_n(X)\Big(  \r(X_n) + \Kcp*\mu(t)(X_n)    \Big)    
    \end{equation}
    for $n = 1, \dots, N$. The system then reads 
    \begin{equation} \label{eq:compact ODE tilde X}
        \left\{
        \begin{aligned}
            \frac{\d \tilde X}{\d t}(t) & = g_\mu(t,\tilde X(t))  \, , \\
            \tilde X_n(0) & =  X^0_n \, , \quad n = 1,\dots,N  \, .
        \end{aligned}
        \right.
    \end{equation}
    The dependence of $g_\mu$ on the time variable $t$ is only due to the terms
    \[
        \begin{split}
            & \Kcp*\mu(t)(X_n)  = \int_{\R^2} \Kcp(X_n - x) \d \mu(t)(x) =  \int_{\R^2} \Kcp(X_n - x) \d ((\ev_t)_\# \mu)(x) \\
            & \quad = \int_{C^0} \Kcp(X_n - \ev_t(\varphi)) \d \mu(\varphi) = \int_{C^0} \Kcp(X_n - \varphi(t)) \d \mu(\varphi) \, ,
        \end{split}
    \]
    which are continuous in $t$. This follows from, \eg, the Dominated Convergence Theorem by observing that the Lipschitz continuity of $\Kcp$ yields
    \[
        |\Kcp(X_n - \varphi(t)) | \leq |\Kcp(0)| + C |X_n - \varphi(t)| \leq C (1 + |X_n| + \|\varphi\|_\infty)
    \]
    and $\int_{C^0} \|\varphi\|_\infty \d \mu(\varphi)    < +\infty$,  since $\mu \in \Pcal_1(C^0)$. The functions $g_{\mu,n}$ are locally Lipschitz in $X$, \ie, given $R > 0$, there exists $C_R > 0$ such that for $t \in [0,T]$ and $\max_n |X_n| \leq R$, $\max_n |X'_n| \leq R$ it holds that 
    \begin{equation} \label{eq:gmu is locally Lipschitz}
        \max_n |g_{\mu,n}(t,X) - g_{\mu,n}(t,X')| \leq C_R  \max_n |X_n - X'_n|\,.   
    \end{equation}
    The computations are analogous to those in~\eqref{eq:b is locally Lipschitz}, the only difference being in the term 
    \[
        \begin{split}
            |\Kcp*\mu(t)(X_n) - \Kcp*\mu(t)(X'_n)|  & \leq \int_{\R^2} |\Kcp(X_n - x) - \Kcp(X'_n - x)| \d \mu(t)(x) \\
            & \leq \int_{\R^2} C |X_n  - X'_n | \d \mu(t)(x) \leq C|X_n - X'_n| \, .
        \end{split}
    \]
    In conclusion, $g_\mu(t,X)$ is continuous in $t$ and locally Lipschitz in $X$ with respect to the $\max$ norm. By Picard-Lindelh\"of's theorem, the ODE system~\eqref{eq:compact ODE tilde X} admits a unique solution for small times. For existence for all times, with computation analogous to those in~\eqref{eq:b has linear growth II} we observe that we have linear growth for $g_\mu$, \ie,
    \[
        \max_n |g_{\mu,n}(t, X)| \leq C(1+\max_n |X_n|) \, ,
    \]
    the constant $C$ above depending on $\|v^N\|_\infty$, $\r$, and $\Kcp$. This upper bound allows for a Gr\"onwall inequality. Indeed,  
    \[
      \begin{split}
        & |\tilde X_n(t)| \leq |X^0_n| + \int_0^t   \Big|\frac{\d \tilde X_n}{\d t}(s)\Big|    \, \d s  = |X^0_n| + \int_0^t  |g_{\mu,n}(s,\tilde X(s))|  \, \d s \\
        & \quad \leq \max_{n'} |X^0_{n'}| + \int_0^t C(1+\max_{n'} |X_{n'}|)\, \d s  =  \max_{n'} |X^0_{n'}| + CT + C \int_0^t  \max_{n'} |X_{n'}|\, \d s  \, ,
    \end{split}
    \]
    which yields 
    \begin{equation} \label{eq:boundedness of tildeX} 
        \max_n |\tilde X_n(t)| \leq C(\max_n |X^0_n|+T) e^{Ct}  \, , \quad \text{for all } t \in [0,T] \, ,
    \end{equation}     \EEE
    and, in particular, boundedness of solutions in terms of the initial datum $X^0$ and final time~$T$ (in addition to $\|v^N\|_\infty$, $\r$, and $\Kcp$). This is enough to deduce global existence in time. 

    \substep{1}{2} Given the continuous curves $\tilde X$ and $Z$ obtained previously, we consider the~SDE~\eqref{eq:decoupled SDE ODE - II}. We rewrite this SDE by introducing the drift vector $b_{\tilde X} \colon [0,T] \x \R^2 \to \R^2$ (depending on~$\tilde X$)  
    \begin{equation} \label{eq:def of btildeX}
        b_{\tilde X}(t,Y) :=  \frac{1}{L} \sum_{\ell=1}^L \Kpg(Y - Z_\ell(t))  - \frac{1}{N} \sum_{n=1}^N \Kpc(Y - \tilde X_n(t)) 
    \end{equation}
    and by considering the constant dispersion matrix $\sigma = \sqrt{2 \kappa} \, \Id_2$, so that the SDE reads 
    \begin{equation} \label{eq:tildeY integral solution}
        \left\{
            \begin{aligned}
                \d \tilde Y(t) & = b_{\tilde X}(t,\tilde Y(t)) \d t + \sigma   \, \d W(t) \, ,  \\
                \tilde Y(0) & = \bar Y^0 \ \text{a.s.} \\
            \end{aligned}
            \right.
        \end{equation}
    For the existence and uniqueness of a strong solution to this SDE, we check that the assumptions of Proposition~\ref{prop:SDE simple} are satisfied. The drift $b_{\tilde X}$ is continuous in $t$: it follows from the continuity of the curves $\tilde X_n$ and $Z_\ell$. The drift $b_{\tilde X}$ is  globally Lipschitz continuos in $Y$. Indeed, we have that  
        \begin{equation} \label{eq:btildeX is Lipschitz}
            \begin{split} 
                |b_{\tilde X}(t,Y) - b_{\tilde X}(t, Y')| &  \leq  \frac{1}{L} \sum_{\ell=1}^L | \Kpg(Y - Z_\ell(t)) - \Kpg(Y' - Z_\ell(t))|  \\
                & \quad + \frac{1}{N} \sum_{n=1}^{N} | \Kpc(Y - \tilde X_n(t)) - \Kpc(Y' - \tilde X_n(t))| \leq C |Y - Y'| \, ,
            \end{split} 
        \end{equation} 
        the constant $C$ only depending on the Lipschitz constants of $\Kpg$ and $\Kpc$. Finally, $b_{\tilde X}$ satisfies the linear growth condition. This follows from~\eqref{eq:Lipschitz implies linear growth} and the analogous condition for $\Kpc$, which yield  
        \[
            \begin{split}    
                |b_{\tilde X}(t,Y)| & \leq  \frac{1}{L} \sum_{\ell=1}^L |\Kpg(Y - Z_\ell(t))|  + \frac{1}{N} \sum_{n=1}^N |\Kpc(Y - \tilde X_n(t)) | \\
                & \leq |\Kpg(0)| + \frac{1}{L} \sum_{\ell=1}^L C |Y - Z_\ell(t)| + |\Kpc(0)| + \frac{1}{N} \sum_{n=1}^N C |Y -  \tilde X_n(t)| \\
                & \leq |\Kpg(0)| + |\Kpc(0)| + \|Z\|_{\infty} + \max_n \|\tilde X_n\|_\infty + |Y|  \leq C(1 + |Y|) \, , 
            \end{split}
        \]  
        where the constant $C$ depends on $\Kpg$, $\Kpc$, $\|Z\|_{\infty}$, and $\max_n \|\tilde X_n\|_{\infty}$ and we used the boundedness of $\tilde X$ obtained in~\eqref{eq:boundedness of tildeX}.

    We are in a position to apply Proposition~\ref{prop:SDE simple}, which also gives us that  
    \begin{equation} \label{eq:bound on tildeY}
        \E( \|\tilde Y \|_\infty) \leq C(1 + \E(|\bar Y^0 |)) \, .
    \end{equation}   
    This implies that $\Law(\tilde Y) \in \Pcal_1(C^0)$. Indeed,  
    \[
        \begin{split}
            \int_{C^0} \|\varphi\|_\infty \d \Law(\tilde Y)(\varphi) & =  \int_{C^0} \|\varphi\|_\infty \d (\tilde Y_\# \P)(\varphi) = \int_\Omega \|\tilde Y(\cdot,\omega)\|_\infty \, \d \P(\omega)  = \E(\|\tilde Y\|_\infty) < + \infty \, .
        \end{split}
    \]

    \vspace{1em}

\step{2} (Fixed-point argument) Let us implement the machinery to carry out a fixed-point argument.

\substep{2}{1} (Definition of Picard operator) We consider the functional $\L \colon \Pcal_1(C^0) \to \Pcal_1(C^0)$ defined as follows: given $\mu \in \Pcal_1(C^0)$, we let $\tilde X =(\tilde X_1,\dots,\tilde X_N)$ and $(\tilde Y(t))_{t \in [0,T]}$ be the unique solution to~\eqref{eq:decoupled SDE ODE - I}--\eqref{eq:decoupled SDE ODE - II} obtained as explained in the previous step. Then we set $\L(\mu) := \Law(\tilde Y)$, which belongs to $\Pcal_1(C^0)$ as explained in the previous step. We shall show that $\L$ is a contraction with respect to a suitable auxiliary distance on $\Pcal_1(C^0)$, to deduce the existence of a fixed point. 

\substep{2}{2} (Definition of equivalent Wasserstein distance) The auxiliary distance we consider on $\Pcal_1(C^0)$ is defined as follows. We let $\alpha > 0$ (its choice is made precise later in~\eqref{eq:choice of alpha}) and we define on $C^0$ the norm  
\begin{equation} \label{eq:alpha norm}
    \mynorm \varphi \mynorm_\alpha := \sup_{t \in [0,T]} \Big( e^{-\alpha t} |\varphi(t)| \Big) \, .  
\end{equation}  
Then we define the auxiliary distance on $\Pcal_1(C^0)$ by 
\[
  \W_{1,\alpha}\big( \mu_1, \mu_2 \big) :=   \inf_{\gamma} \int_{C^0 \x C^0} \mynorm \varphi- \psi \mynorm_\alpha \, \d \gamma(\varphi,\psi)  \, ,    
  \] 
  where the infimum is taken over all transport plans $\gamma \in \Pcal(C^0 \x C^0)$ with marginals $\pi^1_\# \gamma = \mu_1$ and $\pi^2_\# \gamma = \mu_2$, where $\pi^i$ is the projection on the $i$-th component. Since the norm $\mynorm \cdot \mynorm_\alpha$ is equivalent to the usual uniform norm $\|\cdot \|_{\infty}$ on $C^0$, the distance $\W_{1,\alpha}$ is equivalent to the usual $1$-Wasserstein distance $\W_1$ on $\Pcal_1(C^0)$.

\substep{2}{3} (Start of proof of contraction property) Given $\mu, \mu' \in \Pcal_1(C^0)$, let us estimate $\W_{1,\alpha}\big( \L(\mu), \L(\mu')  \big)$. Let $\tilde X = (\tilde X_1, \dots, \tilde X_N)$, $\tilde Y$ and $\tilde X' = (\tilde X'_1, \dots, \tilde X'_N)$, $\tilde Y'$ be solutions obtained in {\itshape Step~1} corresponding to $\mu$ and $\mu'$, respectively. By Kantorovich's duality, there exists a functional $\Psi \colon C^0 \to C^0$ Lipschitz continuous with respect to $\mynorm \cdot \mynorm_\alpha$ with Lipschitz constant 1 such that, using the fact that $\L(\mu) = \Law(\tilde Y)$ and $\L(\mu') = \Law(\tilde Y')$, 
    \begin{equation} \label{eq:first estimate for contraction}
        \begin{split}
            \W_{1,\alpha}\big( \L(\mu), \L(\mu')  \big) & = \int_{C^0} \Psi(\varphi) \, \L(\mu)(\varphi) - \int_{C^0} \Psi(\varphi') \, \L(\mu')(\varphi') \\
            & = \E(\Psi(\tilde Y) - \Psi(\tilde Y')) \leq \E(\mynorm\tilde Y - \tilde Y'\mynorm_\alpha) \, .
        \end{split}
    \end{equation}
    The following substeps show how to estimate the term $\E \big( \mynorm \tilde Y - \tilde Y' \mynorm_{\alpha}  \big)$.   
    
    \substep{2}{4} (Estimate of $|\tilde Y(t) - \tilde Y'(t) |$) We start by observing that from~\eqref{eq:tildeY integral solution}, from the definition of $b_{\tilde X}$ in~\eqref{eq:def of btildeX}, by the Lipschitz continuity of $\Kpc$, and by the Lipschitz continuity of $b_{\tilde X'}$ obtained in~\eqref{eq:btildeX is Lipschitz}, we have that a.s.\
    \begin{equation} \label{eq:first estimate of tildeY}
        \begin{split}
            & | \tilde Y(t) - \tilde Y'(t) |  = \Big|    \int_0^t b_{\tilde X}(s,\tilde Y(s)) \d s   -  \int_0^t \tilde b_{\tilde X'}(s,\tilde Y'(s)) \d s \Big| \\
            & \quad \leq \int_0^t \Big( |b_{\tilde X}(s,\tilde Y(s)) - b_{\tilde X'}(s,\tilde Y(s))|   +   |b_{\tilde X'}(s,\tilde Y(s)) - b_{\tilde X'}(s,\tilde Y'(s))| \Big) \, \d s \\
            & \quad \leq \int_0^t  \Big(  \frac{1}{N} \sum_{n=1}^N | \Kpc(\tilde Y(s) - \tilde X_n(s)) - \Kpc( \tilde Y(s) - \tilde X'_n(s))| + C|\tilde Y(s) - \tilde Y'(s)| \Big) \, \d s \\
            & \quad \leq \int_0^t C \Big(  \max_n  |\tilde X_n(s) - \tilde X'_n(s)| +  |\tilde Y(s) - \tilde Y'(s)| \Big) \, \d s \, , 
        \end{split}
    \end{equation} 
    the constant $C$ depending only on the Lipschitz constants of $\Kpg$ and $\Kpc$. 

    \substep{2}{5} (Estimate of $|\tilde X_n(s) - \tilde X'_n(s) |$) The curves $\tilde X$ and $\tilde X'$ are solutions to~\eqref{eq:compact ODE tilde X}. As obtained in~\eqref{eq:boundedness of tildeX}, they are bounded by a constant $R > 0$ depending on the initial datum $X^0$, the final time $T$, and parameters of the problem ($\|v\|_\infty$, $\r$, and $\Kcp$), \ie, $\max_n \|\tilde X_n\|_\infty \leq R$, $\max_n \|\tilde X'_n\|_\infty \leq R$. We recall that $g_{\mu}$ and $g_{\mu'}$ are locally Lipschitz, hence there exists $C > 0$ (depending on $R$) such that~\eqref{eq:gmu is locally Lipschitz} is satisfied. It follows that for $n=1,\dots,N$
    \begin{equation} \label{eq:3110221829}
        \begin{split}
            & |\tilde X_n(s) - \tilde X'_n(s) |  \leq \int_0^s |g_{\mu,n}(r,\tilde X(r)) - g_{\mu',n}(r,\tilde X'(r))| \, \d r \\
            & \quad \leq \int_0^s \Big( |g_{\mu,n}(r,\tilde X(r)) - g_{\mu,n}(r,\tilde X'(r))|  + |g_{\mu,n}(r,\tilde X'(r)) - g_{\mu',n}(r,\tilde X'(r))| \Big)  \, \d r \\
            & \quad \leq  \int_0^s \Big( C \max_{n'}|\tilde X_{n'}(r) - \tilde X'_{n'}(r)|  + |g_{\mu,n}(r,\tilde X'(r)) - g_{\mu',n}(r,\tilde X'(r))| \Big)  \, \d r \, .
        \end{split}
    \end{equation}
   Let us now apply the definition of $g_\mu$ and $g_{\mu'}$ in~\eqref{eq:def of gmu} to estimate for $n = 1, \dots, N$ and $r \in [0,s]$
    \begin{equation} \label{eq:3110221830}
        \begin{split}
             & |g_{\mu,n}(r,\tilde X'(r)) - g_{\mu',n}(r,\tilde X'(r))| \leq \|v\|_\infty \big|\Kcp*\mu(r)(\tilde X'_n(r))  - \Kcp*\mu'(r)(\tilde X'_n(r)) \big|  \\
            &\quad \leq C  \Big| \int_{\R^2} \Kcp( \tilde X'_n(r) - x) \, \d \Big( \mu(r) - \mu'(r) \Big)(x) \Big|  \, ,
        \end{split}
    \end{equation}
    where the constant $C$ depends on $\|v\|_\infty$. We observe that by the Lipschitz continuity of $x \mapsto \Kcp(\tilde X'_n(r) - x)$ and by Kantorovich's duality, 
    \begin{equation} \label{eq:3110221831}
        \Big| \int_{\R^2} \Kcp( \tilde X'_n(r) - x) \, \d \Big( \mu(r) - \mu'(r) \Big)(x) \Big|     \leq  C \W_1(\mu(r), \mu'(r)) \, ,
    \end{equation}
    where $C$ is the Lipschitz constant of $\Kcp$. To bound this term, let us fix an optimal plan $\gamma \in \Pcal(C^0 \x C^0)$ with marginals $\pi^1_\# \gamma = \mu$, $\pi^2_\# \gamma = \mu'$ and satisfying 
    \[
    \W_{1,\alpha}(\mu, \mu') =    \int_{C^0 \x C^0} \mynorm \varphi - \psi \mynorm_\alpha \, \d \gamma(\varphi,\psi)   \, . 
    \]
    We remark that $\gamma(r) = (\ev_r)_\# \gamma \in \Pcal(\R^2 \x \R^2)$ has marginals $\pi^1_\# (\ev_r)_\# \gamma = (\ev_r)_\# \pi^1_\# \gamma = \mu(r)$ and $\pi^2_\# (\ev_r)_\# \gamma = (\ev_r)_\#  \pi^2_\# \gamma =  \mu'(r)$, hence, by optimality of $\W_1$ and by the definition of $\mynorm \cdot \mynorm_\alpha$ in~\eqref{eq:alpha norm}, we obtain for $r \in [0,s]$ 
    \begin{equation*} 
        \begin{split}
           &  \W_1(\mu(r), \mu'(r)) \leq  \int_{\R^2 \x \R^2} |x - x'|\,  \d \gamma(r)(x,x')  = \int_{\R^2 \x \R^2} |x - x'|\,  \d (\ev_r)_\#\gamma(x,x') \\
            & = \int_{C^0 \x C^0} |\varphi(r) - \psi(r)|\,  \d \gamma(\varphi,\psi)  \leq e^{ \alpha r} \int_{C^0 \x C^0}   e^{- \alpha r}|\varphi(r) - \psi(r)|  \,  \d \gamma(\varphi,\psi)   \\
            &  \leq e^{\alpha r} \int_{C^0 \x C^0} \mynorm \varphi  - \psi  \mynorm_\alpha \,  \d \gamma(\varphi,\psi)    = e^{\alpha r} \W_{1,\alpha}(\mu, \mu') \, .
        \end{split} 
    \end{equation*}
    Integrating in $r$, we get that 
    \begin{equation} \label{eq:3110221832}
        \int_0^s    \W_1(\mu(r), \mu'(r)) \, \d r \leq \frac{e^{\alpha s}-1}{\alpha} \W_{1,\alpha}(\mu, \mu')  \leq \frac{e^{\alpha s}}{\alpha} \W_{1,\alpha}(\mu, \mu') \, .
    \end{equation}
    Putting together \eqref{eq:3110221829}--\eqref{eq:3110221832} we conclude that 
    \[
        \max_n |\tilde X_n (s) - \tilde X'_n(s) | \leq C \Big( \int_0^s  \max_n |\tilde X_n (r) - \tilde X'_n (r)|  \, \d r + \frac{e^{\alpha s}}{\alpha} \W_{1,\alpha}(\mu, \mu') \Big) \, \d r\, .
    \]
    By Gr\"onwall's inequality we conclude that 
    \begin{equation} \label{eq:estimate of tildeX}
        \begin{split}
            \max_n |\tilde X_n (s) - \tilde X'_n (s) | & \leq C \frac{e^{\alpha s}}{\alpha} e^{C s} \W_{1,\alpha}(\mu, \mu') \leq C e^{CT}\frac{ e^{\alpha s}}{\alpha} \W_{1,\alpha}(\mu, \mu') \\
            & \leq C \frac{ e^{\alpha s}}{\alpha} \W_{1,\alpha}(\mu, \mu').
        \end{split}
    \end{equation}
    To sum up, the constant $C$ in the previous formula depends on: $X^0$, $T$, $\|v\|_\infty$, $\r$, and~$\Kcp$.
    
    \substep{2}{6} (Concluding estimate of $|\tilde Y(t) - \tilde Y'(t) |$) Substituting~\eqref{eq:estimate of tildeX} in~\eqref{eq:first estimate of tildeY} we obtain that 
    \[
        \begin{split}
            | \tilde Y(s) - \tilde Y'(s) | & \leq C \int_0^s  \Big(  \frac{e^{\alpha r}}{\alpha}   \W_{1,\alpha}(\mu, \mu') +   |\tilde Y(r) - \tilde Y'(r)| \Big) \, \d r \\
             & \leq C \Big( \frac{e^{\alpha s} - 1}{\alpha^2}  \W_{1,\alpha}(\mu, \mu') +  \int_0^s     |\tilde Y(r) - \tilde Y'(r)|  \, \d r \Big) \\
             & \leq C \Big( \frac{e^{\alpha s}}{\alpha^2}  \W_{1,\alpha}(\mu, \mu') +  \int_0^s     |\tilde Y(r) - \tilde Y'(r)|  \, \d r \Big) \, .
        \end{split}
    \]
    Multiplying both sides by $e^{-\alpha s}$ and using that $e^{- \alpha s} \leq e^{-\alpha r}$ we get that a.s.\ for $s \in [0,t]$
    \[
        \begin{split}
             e^{- \alpha s} | \tilde Y(s) - \tilde Y'(s) |    & \leq C \Big( \frac{1}{\alpha^2}  \W_{1,\alpha}(\mu, \mu') +  \int_0^s  e^{- \alpha r}   |\tilde Y(r) - \tilde Y'(r)|  \, \d r \Big) \\
            & \leq C \Big( \frac{1}{\alpha^2}  \W_{1,\alpha}(\mu, \mu') +  \int_0^{t} \sup_{0 \leq r \leq s} e^{- \alpha r}   |\tilde Y(r) - \tilde Y'(r)|   \, \d s \Big) \, .
        \end{split}
    \]
    Taking the supremum for $s \in [0,t]$ and the expectation, we deduce that 
    \[
        \begin{split}
            & \E\Big(\sup_{0 \leq s \leq t} e^{- \alpha s} | \tilde Y(s) - \tilde Y'(s) |   \Big) \\
            & \quad \leq C \Big( \frac{1}{\alpha^2}  \W_{1,\alpha}(\mu, \mu') +  \int_0^{t} \E\Big( \sup_{0 \leq r \leq s}  e^{- \alpha r}   |\tilde Y(r) - \tilde Y'(r)|  \Big) \, \d s \Big)
        \end{split}
            \]
    and thus, by Gr\"onwall's inequality, 
    \[
        \E\Big(\sup_{0 \leq s \leq t} e^{- \alpha s} | \tilde Y(s) - \tilde Y'(s) |  \Big) \leq \frac{C}{\alpha^2} \W_{1,\alpha}(\mu, \mu') e^{C t}
    \]
    which for $t = T$ yields 
    \begin{equation} \label{eq:final estimate of tildeY}
        \E\big(\mynorm \tilde Y - \tilde Y' \mynorm_\alpha \big) \leq \frac{C}{\alpha^2} \W_{1,\alpha}(\mu, \mu') e^{CT} \leq \frac{C}{\alpha^2} \W_{1,\alpha}(\mu, \mu') \, .
    \end{equation}
    Keeping track of the constant $C$, it depends on: $X^0$, $T$, $\|v\|_\infty$, $\r$, $\Kcp$, $\Kpg$, and $\Kpc$.
    
    \substep{2}{7} (Choice of $\alpha$ and end of proof of contraction property) We choose $\alpha > 0$ in such a way that 
    \begin{equation} \label{eq:choice of alpha}
        C_\alpha :=   \frac{C}{\alpha^2}  < 1 \, ,  
    \end{equation}
    where $C$ is the constant obtained in~\eqref{eq:final estimate of tildeY}. In this way, by~\eqref{eq:first estimate for contraction} and~\eqref{eq:final estimate of tildeY} we conclude that 
    \[
    \W_{1, \alpha}(\L(\mu),\L(\mu'))\leq  C_\alpha \W_{1, \alpha}(\mu,\mu') \, ,
    \]
    \ie, $\L \colon \Pcal_1(C^0) \to \Pcal_1(C^0)$ is a contraction with respect to the equivalent Wasserstein distance $\W_{1,\alpha}$. As such, it admits a unique fixed point $\mubp \in \Pcal_1(C^0)$.

    \vspace{1em}

    \step{3} Given the fixed point $\mubp \in \Pcal_1(C^0)$ of $\L$, we define $\bar X = (\bar X_1, \dots, \bar X_N)$ as the solution to~\eqref{eq:decoupled SDE ODE - I} corresponding to $\mubp$, and then we let $\bar Y$ be the solution to~\eqref{eq:decoupled SDE ODE - II} corresponding to $\bar X$. Since $\mubp$ is a fixed point, we have that $\L(\mubp) = \mubp$, \ie, $\Law(\bar Y) = \mubp$. Hence we found the unique strong solution to the coupled system. This concludes the proof.
\end{proof}

\begin{remark} \label{rmk:bar X is bounded}
    By~\eqref{eq:boundedness of tildeX}, it follows that $\max_n \|\bar X_n\|_\infty$ is bounded by a constant depending on the initial datum $X^0$, the final time $T$, $\|v^N\|_\infty$, $\r$, and $\Kcp$. 

    By~\eqref{eq:bound on tildeY}, it follows that $\E(\|\bar Y\|_\infty) \leq C(1+\E(|\bar Y^0|))$, where the constant $C$ depends on $\Kpg$, $\Kpc$, $\|Z\|_{\infty}$, $\max_n \|\bar X_n\|_{\infty}$, $T$, and $W$. 
\end{remark}

\section{Propagation of chaos}
 
\begin{proposition} \label{prop:identically distributed} Assume the following:
    \begin{itemize}
        \item Let $(W_1(t))_{t \in [0,T]}$ and $(W_2(t))_{t \in [0,T]}$ be two $\R^2$-valued Brownian motions. 
        \item Let $X^0 = (X_1^0,\dots,X_N^0) \in \R^{2\x N}$;
        \item Let $Y_1^0, Y_2^0$ be identically distributed $\R^2$-valued random variables with $\E(|Y_m^0|) < +\infty$;
        \item Let  $Z^0 = (Z_1^0,\dots,Z_L^0) \in \R^{2\x L}$;
        \item Let $u \in L^\infty([0,T];\U)$. 
    \end{itemize}
    For $m = 1, 2$, let $\bar X_m = (\bar X_{m,1},\dots,\bar X_{m,N})$, $(\bar Y_m(t))_{t \in [0,T]}$, $Z = (Z_1, \dots, Z_L)$ be the unique strong solution to\footnote{This corresponds to the averaged ODE/SDE/ODE system~\eqref{eq:ODE Y average} with initial data $X^0$, $Y_m^0$, $Z^0$, with Brownian motion~$W_m$, and with control $u$ provided by Proposition~\ref{prop:solution to averaged ODE/SDE}.}
    \begin{equation} \label{eq:m m ODE Y average}
        \left\{
            \begin{aligned}
                & \frac{\d \bar X_{m,n}}{\d t}(t) = v^N_n(\bar X_m(t))\Big(  \r(\bar X_{m,n}(t)) + \Kcp*\mubp_m(t)(\bar X_{m,n}(t))    \Big) \, ,      \\
                & \d \bar Y_m(t)  = \Big(\frac{1}{L} \sum_{\ell=1}^L \Kpg(\bar Y_m(t) - Z_\ell(t))  - \frac{1}{N} \sum_{n=1}^N \Kpc( \bar Y_m(t) - \bar X_{m,n}(t)) \Big) \d t   + \sqrt{2\kappa} \, \d W_m(t) \, ,   \\
                & \frac{\d Z_\ell}{\d t}(t)  =  \frac{1}{L}\sum_{\ell'=1}^L \Kgg(Z_\ell(t) - Z_{\ell'}(t))  + u_\ell(t)   \, ,  \\ 
                & \bar X_{m,n}(0) =  X_n^0 \, , \  \ Z_\ell(0) =  Z_\ell^0 \, , \quad n = 1,\dots,N \, , \ \ell = 1,\dots, L \, , \\
                & \bar Y_m(0)  = Y_m^0 \  \text{a.s.,} \quad \mubp_m  = \Law(\bar Y_m) \, .
            \end{aligned}
        \right.
    \end{equation}
    Then the stochastic processes $(\bar Y_1(t))_{t \in [0,T]}$ and $(\bar Y_2(t))_{t \in [0,T]}$ are identically distributed and $\bar X_{1}(t) = \bar X_{2}(t)$ for $t \in [0,T]$. 
\end{proposition}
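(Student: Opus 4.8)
The plan is to reduce everything to the single claim $\mubp_1 = \mubp_2$. Note that in~\eqref{eq:m m ODE Y average} the equation for $\bar X_m$ depends on the process $\bar Y_m$ only through its law $\mubp_m$, while the remaining data ($v^N$, $\r$, $\Kcp$, $X^0$, and the curve $Z$) are common to $m=1,2$. Hence, once $\mubp_1 = \mubp_2$ is known, $\bar X_1$ and $\bar X_2$ solve the same initial value problem for the ODE with right-hand side $g_{\mubp_1} = g_{\mubp_2}$ (the notation of~\eqref{eq:decoupled SDE ODE - I}) and the same initial datum $X^0$, so $\bar X_1(t) = \bar X_2(t)$ for all $t \in [0,T]$ by the uniqueness part of Picard--Lindel\"of (cf.\ Substep~1.1 in the proof of Proposition~\ref{prop:solution to averaged ODE/SDE}); and $\Law(\bar Y_1) = \mubp_1 = \mubp_2 = \Law(\bar Y_2)$ is exactly the statement that $(\bar Y_1(t))_{t\in[0,T]}$ and $(\bar Y_2(t))_{t\in[0,T]}$ are identically distributed.

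To prove $\mubp_1 = \mubp_2$, I recall from the proof of Proposition~\ref{prop:solution to averaged ODE/SDE} that, for $m=1,2$, the measure $\mubp_m$ is the unique fixed point of the contraction $\L_m \colon \Pcal_1(C^0) \to \Pcal_1(C^0)$, $\L_m(\mu) := \Law(\tilde Y^\mu_m)$, where $\tilde X^\mu = (\tilde X^\mu_1,\dots,\tilde X^\mu_N)$ is the \emph{deterministic} solution of~\eqref{eq:decoupled SDE ODE - I} and $(\tilde Y^\mu_m(t))_{t\in[0,T]}$ is the strong solution of the decoupled SDE~\eqref{eq:decoupled SDE ODE - II} with drift $b_{\tilde X^\mu}$, initial datum $Y^0_m$, and Brownian motion $W_m$. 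The key claim is that $\L_1 = \L_2$. Since $\tilde X^\mu$ and hence $b_{\tilde X^\mu}$ do not depend on $m$, it suffices to show that $\tilde Y^\mu_m$ is a fixed Borel-measurable functional of the pair $(Y^0_m, W_m)$: tracking the Picard iterations~\eqref{eq:2212062029}--\eqref{eq:2212062030} in the proof of Proposition~\ref{prop:SDE simple}, each iterate is a (continuous, hence Borel) functional of $(Y^0_m, W_m)$ that depends on $\mu$ but not on $m$, and so is their a.s.\ uniform limit; thus $\tilde Y^\mu_m = F_\mu(Y^0_m, W_m)$ a.s.\ for a Borel map $F_\mu \colon \R^2 \x C^0 \to C^0$ independent of $m$.

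It remains to observe that $(Y^0_1, W_1)$ and $(Y^0_2, W_2)$ have the same joint law. In the filtered setting adopted throughout the paper, $W_m$ is a Brownian motion with respect to a filtration whose initial $\sigma$-algebra $\F_0$ contains $\sigma(Y^0_m)$; since $W_m(t) = W_m(t) - W_m(0)$ is independent of $\F_0$, the random variable $Y^0_m$ is independent of $W_m$, so $\Law(Y^0_m, W_m) = \Law(Y^0_m) \otimes \Law(W_m)$. As $Y^0_1$ and $Y^0_2$ are identically distributed and $W_1$, $W_2$ are both $2$-dimensional Brownian motions (so $\Law(W_1) = \Law(W_2)$ is the Wiener measure), we get $\Law(Y^0_1, W_1) = \Law(Y^0_2, W_2)$, whence $\L_1(\mu) = (F_\mu)_\#\Law(Y^0_1, W_1) = (F_\mu)_\#\Law(Y^0_2, W_2) = \L_2(\mu)$ for every $\mu \in \Pcal_1(C^0)$. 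Since $\L_1 = \L_2$ and this operator has a unique fixed point, $\mubp_1 = \mubp_2$, and the proof concludes as described in the first paragraph.

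I expect the only delicate point to be the measurable-functional representation $\tilde Y^\mu_m = F_\mu(Y^0_m, W_m)$ and the Borel measurability of the limiting map $F_\mu$ --- essentially a Yamada--Watanabe-type remark that pathwise uniqueness turns the strong solution into a deterministic functional of the data --- together with spelling out the independence of $Y^0_m$ and $W_m$ that is implicit in the paper's filtration conventions. The remaining ingredients (uniqueness for the $\bar X$-ODE and uniqueness of the fixed point of $\L_m$) are already available from Propositions~\ref{prop:SDE simple} and~\ref{prop:solution to averaged ODE/SDE}.
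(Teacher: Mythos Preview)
Your proposal is correct and follows essentially the same route as the paper: both reduce to showing $\L_1(\mu)=\L_2(\mu)$ for every $\mu\in\Pcal_1(C^0)$ via the Picard iterations of the decoupled SDE~\eqref{eq:decoupled SDE ODE - II}, and then invoke uniqueness of the fixed point from Proposition~\ref{prop:solution to averaged ODE/SDE}. The only presentational difference is that the paper inducts directly on the laws of the iterates $\tilde Y_m^j$ and passes to the limit, whereas you package the limit as a single Borel functional $F_\mu$ and push forward the joint law of $(Y_m^0,W_m)$; your explicit treatment of the independence of $Y_m^0$ and $W_m$ is in fact a point the paper leaves implicit.
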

\begin{proof}
    We fix $Z = (Z_1,\dots,Z_L)$ as the solution to 
    \[
        \left\{
            \begin{aligned}
        \frac{\d Z_\ell}{\d t}(t) & =   \frac{1}{L}\sum_{\ell'=1}^L \Kgg(Z_\ell(t) - Z_{\ell'}(t))  + u_\ell(t)    \, ,  \\ 
        Z_\ell(0) & =  Z_\ell^0 \, , \quad \ell = 1,\dots, L \, ,
    \end{aligned}
    \right.    
    \]
    which is independent of $m$ since it is decoupled from the first two sets of equations.

    Let $m \in \{1,2\}$.  We resort to some tools already considered in {\itshape Step~2} in the proof of Proposition~\ref{prop:solution to averaged ODE/SDE}. As in that proof, we set $C^0 := C^0([0,T];\R^2)$.

    \vspace{1em}

    \step{1} (Exploiting the decoupled system) Given $\mu \in \Pcal_1(C^0)$ we let $\tilde X = (\tilde X_1,\dots,\tilde X_N)$ and $(\tilde Y_m(t))_{t \in [0,T]}$ be the unique solution to the decoupled system
    \begin{equation} \label{eq:decoupled SDE ODE m I}
        \left\{
            \begin{aligned}
                \frac{\d \tilde X_n}{\d t}(t) &= v^N_n(\tilde X(t))\Big(  \r(\tilde X_n(t)) + \Kcp*\mu(t)(\tilde X_n(t))    \Big) \, ,      \\
                 \tilde X_n(0) &=  X_n^0 \, , \quad n = 1,\dots,N \, ,\\
            \end{aligned}
        \right.
    \end{equation}
   \begin{equation} \label{eq:decoupled SDE ODE m II}
       \left\{
           \begin{aligned}
                   \d \tilde Y_m(t) & = \Big(\frac{1}{L} \sum_{\ell=1}^L \Kpg(\tilde Y_m(t) - Z_\ell(t))  - \frac{1}{N} \sum_{n=1}^N \Kpc(\tilde Y_m(t) - \tilde X_n(t)) \Big) \d t  + \sqrt{2\kappa} \, \d W_m(t) \, ,  \\
                \tilde Y_m(0)  & =  Y_m^0 \ \text{a.s.,}
           \end{aligned}
       \right.
   \end{equation}
   obtained as explained in {\itshape Step 1} in the proof of Proposition~\ref{prop:solution to averaged ODE/SDE}. We claim that 
   \begin{equation} \label{eq:Law tilde Ym}
     \Law(\tilde Y_1) = \Law(\tilde Y_2) \, .
   \end{equation}
   Using the short-hand notation introduced in~\eqref{eq:def of btildeX}, we have a.s.\ for $t \in [0,T]$ 
   \[
        \tilde Y_m(t) = Y_m^0 + \int_0^t b_{\tilde X}(s,\tilde Y_m(s)) \, \d s + \sqrt{2 \kappa} \, W_m(t) \, .
   \]
   \substep{1}{1} (Proof of claim~\eqref{eq:Law tilde Ym} for Picard iterations $\tilde Y_m^j$) We consider the Picard iterations (used in the proof of Proposition~\ref{prop:SDE simple}) constructed as follows for $m=1,2$: for $\omega \in \Omega$ 
   \begin{align}
        \tilde Y_m^0(t,\omega) & := Y_m^0(\omega) \, , \  \text{for } t \in [0,T]  \, , \label{eq:Picard base}\\
        \tilde Y_m^j(t,\omega) & := Y_m^0(\omega) + \int_0^t b_{\tilde X}(s,\tilde Y^{j-1}_m(s,\omega)) \, \d s + \sqrt{2 \kappa} \, W_m(t,\omega)\, , \ \text{for } t \in [0,T]\, , \ j \geq 1 \, .  \label{eq:Picard iteration}
    \end{align}
    We observe that $\Law(\tilde Y_1^0) = \Law(\tilde Y_2^0)$, as by~\eqref{eq:Picard base} they coincide with the common law of the identically distributed random variables given by the initial data $Y_1^0$, $Y_2^0$. This is the base step of an induction argument. Let $j \geq 1$ and assume $\Law(\tilde Y_1^{j-1}) = \Law(\tilde Y_2^{j-1})$. Let $\Psi \colon \R^2 \x C^0 \x C^0 \to C^0$ be the continuous map defined by
    \[
    \Psi(\xi,\varphi,w)(t) := \xi + \int_0^t b_{\tilde X}(s,\varphi(s)) \, \d s + \sqrt{2 \kappa} \, w(t) \,. 
    \]
    With this notation, \eqref{eq:Picard iteration} reads $Y_m^j(\cdot,\omega) = \Psi(Y_m^0(\omega), \tilde Y^{j-1}_m(\cdot, \omega), W_m(\cdot,\omega))$ for $\omega \in \Omega$ such that $W_m(\cdot,\omega)$ is a continuous path (this occurs a.s.). Then we have that 
    \[
        \begin{split}
            \Law(\tilde Y_m^{j}) & = (\tilde Y_m^{j})_\# \P = \Psi_\#(Y_m^0, \tilde Y^{j-1}_m, W_m)_\#\P 
        \end{split}
    \]
    Since $Y^0_1$, $Y^0_2$ are identically distributed, by the induction assumption, and since $\Law(W_1) = \Law(W_2)$ (it is the Wiener measure), we have that $(Y_1^0, \tilde Y^{j-1}_1, W_1)_\#\P = (Y_2^0, \tilde Y^{j-1}_2, W_2)_\#\P$. Thus, repeating backward the same computations for $\tilde Y_2^{j}$, we conclude that $\Law(\tilde Y_1^{j}) = \Law(\tilde Y_2^{j})$.
    
     \substep{1}{2} (Convergence of Picard iterations to $\tilde Y_m$) By Remark~\ref{rmk:Picard itearations} we have that $\E(\| \tilde Y^j_m  - \tilde Y_m\|_\infty) \to 0$. (Note that $b_{\tilde X}$ is globally Lipschitz continuous, as proven in~\eqref{eq:btildeX is Lipschitz}.)

    \substep{1}{3} (Proof of claim~\eqref{eq:Law tilde Ym}) The convergence $\E(\|\tilde Y_m^j - \tilde Y_m\|_\infty) \to 0$ implies that $\tilde Y_m^j \to \tilde Y_m$ in law, hence $\Law(\tilde Y_1) = \Law(\tilde Y_2)$, that is our claim~\eqref{eq:Law tilde Ym}.

    \vspace{1em}

    \step{2} (Exploiting the fixed point) For $m=1,2$ we consider the functionals $\L_m = \L_{Y^0_m, W_m}\colon \Pcal_1(C^0) \to \Pcal_1(C^0)$ defined as in {\itshape Step 2} in the proof of Proposition~\ref{prop:solution to averaged ODE/SDE} (we stress here the dependence on $m$ to keep track of the dependence on the initial datum $Y^0_m$ and the Brownian motion $W_m$). Given $\mu \in \Pcal_1(C^0)$, we let $\tilde X = (\tilde X_1,\dots,\tilde X_N)$ and $(\tilde Y_m(t))_{t \in [0,T]}$ be the unique solution to the decoupled system~\eqref{eq:decoupled SDE ODE m I}--\eqref{eq:decoupled SDE ODE m II}. Then we set $\L_m(\mu) := \Law(\tilde Y_m)$. By the discussion in {\itshape Step~1} we have that $\L_1(\mu) = \L_2(\mu)$.

        Let us now fix an initial guess for the law $\mu$, \eg, $\mu = \delta_0 \in \Pcal_1(C^0)$ (it is enough that it satisfies $\W_1(\mu,\mubp_m) < +\infty$). We apply iteratively $\L_m^0(\mu) = \mu$, $\L_m^j(\mu) = \L_m(\L^{j-1}_m(\mu))$. Since $\L_m$ is a contraction with respect to the modified $1$-Wasserstein distance $\W_{1,\alpha}$, $\L_m^j(\mu) \to \mubp_m$ as $j \to +\infty$, where $\mubp_m$ is the unique fixed point $\mubp_m = \L_m(\mubp_m)$. Since $\L_1(\mu) = \L_2(\mu)$, we conclude that $\mubp_1 = \mubp_2$, \ie, the law given by the solution $\bar Y_m$ to~\eqref{eq:m m ODE Y average} does not depend on~$m$. In conclusion, $\bar Y_1, \bar Y_2$ are  identically distributed. We let $\mubp$ denote their common law.

    The solution $\bar X_m = (\bar X_{m,1},\dots,\bar X_{m,N})$ is then obtained as the solution to~\eqref{eq:decoupled SDE ODE m I} corresponding to $\mubp$. Thus it does not depend on $m$, yielding $\bar X_1 = \bar X_2$.
\end{proof}

\begin{proposition} \label{prop:propagation of chaos} Assume the following: 
    \begin{itemize}
        \item Let $(W_m(t))_{t \in [0,T]}$, $m = 1, \dots,M$ be $M$ independent $\R^2$-valued Brownian motions;
        \item Let $X^0 = (X_1^0,\dots,X_N^0) \in \R^{2\x N}$;
        \item Let $Y_1^0, \dots, Y_M^0$ be i.i.d.\ $\R^2$-valued random variables with $\E(|Y_m^0|) < +\infty$ and independent from the Brownian motions $(W_m(t))_{t \in [0,T]}$;
        \item Let $Z^0 = (Z_1^0,\dots,Z_L^0) \in \R^{2 \x L}$;
        \item Let $u \in L^\infty([0,T];\U)$. 
    \end{itemize}
    For every $m = 1, \dots, M$, let $\bar X = (\bar X_1,\dots,\bar X_N)$, $(\bar Y_m(t))_{t \in [0,T]}$, $Z = (Z_1, \dots, Z_L)$ be the unique strong solution to\footnote{This corresponds to the averaged ODE/SDE/ODE system~\eqref{eq:ODE Y average} with initial data $X^0$, $Y_m^0$, $Z^0$, with Brownian motion~$W_m$, and with control $u$. The solution is provided by Proposition~\ref{prop:solution to averaged ODE/SDE}. Note that we applied Proposition~\ref{prop:identically distributed} to deduce that $(\bar Y_1(t))_{t \in [0,T]}, \dots, (\bar Y_M(t))_{t \in [0,T]}$ are identically distributed with common law $\mubp$ and the curve $\bar X$ is independent of $m$.}
    \begin{equation} \label{eq:m ODE Y average}
        \left\{
            \begin{aligned}
                & \frac{\d \bar X_n}{\d t}(t) = v^N_n(\bar X(t))\Big(  \r(\bar X_n(t)) + \Kcp*\mubp(t)(\bar X_n(t))    \Big) \, ,      \\
                & \d \bar Y_m(t) = \Big(\frac{1}{L} \sum_{\ell=1}^L \Kpg(\bar Y_m(t) - Z_\ell(t))  - \frac{1}{N} \sum_{n=1}^N \Kpc( \bar Y_m(t) - \bar X_n(t)) \Big) \d t  + \sqrt{2\kappa} \, \d W_m(t)  \\
                & \frac{\d Z_\ell}{\d t}(t)  =  \frac{1}{L}\sum_{\ell'=1}^L \Kgg(Z_\ell(t) - Z_{\ell'}(t))  + u_\ell(t)    \, ,  \\ 
                & \bar X_n(0) =  X_n^0 \, , \  \ Z_\ell(0) =  Z_\ell^0 \, , \quad n = 1,\dots,N \, , \ \ell = 1,\dots, L \, , \\
                & \bar Y_m(0)  = Y_m^0 \ \text{a.s.,} \quad  \mubp  = \Law(\bar Y_m) \, .
            \end{aligned}
        \right.
    \end{equation}
    Then the stochastic processes $(\bar Y_1(t))_{t \in [0,T]}, \dots, (\bar Y_M(t))_{t \in [0,T]}$ are independent.
\end{proposition}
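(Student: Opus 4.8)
\emph{Plan.} The idea is to realise each $\bar Y_m$ as the image of the pair $(Y^0_m,W_m)$ under one and the same measurable solution map $\Phi$, and then conclude from the independence of the inputs. This is the reason Proposition~\ref{prop:identically distributed} is invoked first: it guarantees that the drift felt by every $\bar Y_m$ is the same deterministic function.

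\emph{Step 1 (setting up the common SDE).} First I would fix $Z=(Z_1,\dots,Z_L)$ as the unique solution of the decoupled ODE for the guard ships, which is deterministic and does not depend on $m$. By Proposition~\ref{prop:solution to averaged ODE/SDE} and Proposition~\ref{prop:identically distributed}, the curve $\bar X=(\bar X_1,\dots,\bar X_N)\in C^0([0,T];\R^{2\x N})$ and the common law $\mubp$ do not depend on $m$ either. Hence, recalling Step~3 of the proof of Proposition~\ref{prop:solution to averaged ODE/SDE}, each $\bar Y_m$ is the unique strong solution of
\[
\d \bar Y_m(t) = b_{\bar X}(t,\bar Y_m(t))\,\d t + \sqrt{2\kappa}\,\d W_m(t), \qquad \bar Y_m(0)=Y^0_m \ \text{a.s.},
\]
with the \emph{same} drift $b_{\bar X}$ from~\eqref{eq:def of btildeX}, which is continuous in $t$, globally Lipschitz in $Y$ by~\eqref{eq:btildeX is Lipschitz}, and of linear growth.

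\emph{Step 2 (the measurable solution map).} Next I would build $\Phi$ out of the Picard iterations used in the proof of Proposition~\ref{prop:SDE simple}. For $(\xi,w)\in\R^2\x C^0([0,T];\R^2)$, set $\Phi_0(\xi,w):=\xi$ (the constant path) and
\[
\Phi_{j+1}(\xi,w) := \Psi(\xi,\Phi_j(\xi,w),w), \qquad \Psi(\xi,\varphi,w)(t):=\xi+\int_0^t b_{\bar X}(s,\varphi(s))\,\d s+\sqrt{2\kappa}\,w(t),
\]
where $\Psi$ is the continuous map already considered in the proof of Proposition~\ref{prop:identically distributed}. Each $\Phi_j$ is continuous, hence Borel measurable. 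Because $b_{\bar X}$ is globally Lipschitz, the estimate $\sup_{[0,t]}|\Phi_{j+1}(\xi,w)-\Phi_j(\xi,w)|\le C(\xi,w)(\Lip(b_{\bar X})t)^j/j!$ from the proof of Proposition~\ref{prop:SDE simple} holds for \emph{every} $(\xi,w)$, so $\{\Phi_j(\xi,w)\}_j$ is Cauchy in $C^0([0,T];\R^2)$ and converges to some $\Phi(\xi,w)$ solving the integral equation with datum $(\xi,w)$; as a pointwise limit of measurable maps, $\Phi$ is measurable. For a.e.\ $\omega$ the Picard iterations $\tilde Y^j_m(\cdot,\omega)$ of the SDE above coincide with $\Phi_j(Y^0_m(\omega),W_m(\cdot,\omega))$, hence converge to $\Phi(Y^0_m(\omega),W_m(\cdot,\omega))$, which by the uniqueness part of Proposition~\ref{prop:SDE simple} equals $\bar Y_m(\cdot,\omega)$. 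Thus $\bar Y_m=\Phi(Y^0_m,W_m)$ a.s.

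\emph{Step 3 (conclusion).} Since $Y^0_1,\dots,Y^0_M$ are mutually independent, $W_1,\dots,W_M$ are mutually independent, and the family $(Y^0_1,\dots,Y^0_M)$ is independent of the family $(W_1,\dots,W_M)$, the pairs $(Y^0_1,W_1),\dots,(Y^0_M,W_M)$ are mutually independent as $\R^2\x C^0([0,T];\R^2)$-valued random variables. Applying the measurable map $\Phi$ componentwise, $\bar Y_1=\Phi(Y^0_1,W_1),\dots,\bar Y_M=\Phi(Y^0_M,W_M)$ are mutually independent random variables with values in $C^0([0,T];\R^2)$, which by the discussion on the Wiener space in Section~\ref{sec:preliminaries} is precisely the statement that the stochastic processes $(\bar Y_1(t))_{t\in[0,T]},\dots,(\bar Y_M(t))_{t\in[0,T]}$ are independent. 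The only genuinely delicate point is the construction of the \emph{single, $m$-independent} measurable solution map $\Phi$: this hinges on $b_{\bar X}$ being the same for all $m$ (hence Proposition~\ref{prop:identically distributed} must come first) and globally Lipschitz, so that the Picard scheme converges for every fixed pair $(\xi,w)$, not merely almost surely.
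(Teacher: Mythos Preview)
Your proposal is correct and follows essentially the same strategy as the paper: realise each $\bar Y_m$ as the image of the independent pair $(Y^0_m,W_m)$ under a single deterministic solution map, relying on Proposition~\ref{prop:identically distributed} to ensure the drift $b_{\bar X}$ is $m$-independent. The only minor technical difference is that the paper constructs the solution operator $\S\colon \R^2\times C^0\to C^0$ via a contraction argument in the weighted norm $\mynorm\cdot\mynorm_\alpha$ and then verifies directly that $\S$ is Lipschitz continuous in each variable (hence measurable), whereas you obtain measurability as a pointwise limit of the continuous Picard iterates; both routes work equally well here.
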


\begin{proof}
    The leading idea of the proof is to to write $\bar Y_m$ in terms of the initial datum $Y_m^0$ and the Brownian motion $W_m$.

    We consider the solution operator $\S \colon \R^2 \x C^0 \to C^0$ defined by $\S(\xi,w) := \varphi$, where $\varphi$ is the unique solution to the integral equation
    \[
    \varphi(t) = \xi + \int_0^t b_{\bar X} (s, \varphi(s)) \, \d s + \sqrt{2 \kappa} \, w(t) \, , \quad t \in [0,T] \, .    
    \]
    The fact that there exists a unique solution to the previous problem follows from the fact that the operator $\Psi \colon \R^2 \x C^0 \x C^0 \to C^0$ defined by
    \[
        \Psi(\xi,\varphi,w)(t) := \xi + \int_0^t b_{\bar X} (s, \varphi(s)) \, \d s + \sqrt{2 \kappa} \, w(t) \, , \quad  \text{for } t \in [0,T] \, .
    \]
    is such that $\Psi(\xi, \cdot, w) \colon C^0 \to C^0$ is a contraction with respect to the auxiliary norm $\mynorm \varphi \mynorm_\alpha := \sup_{t \in [0,T]} (e^{-\alpha t} |\varphi(t)|)$ for a suitable $\alpha > 0$. Indeed, by the Lipschitz continuity of $b_{\bar X}$,
    \[
        \begin{split}
            & e^{-\alpha t} | \Psi(\xi,\varphi_1,w)(t) - \Psi(\xi,\varphi_2,w)(t)|  \leq e^{-\alpha t}\int_0^t |b_{\bar X} (s, \varphi_1(s)) - b_{\bar X} (s, \varphi_2(s))| \, \d s \\
            & \quad \leq C e^{-\alpha t} \int_0^t e^{\alpha s}  e^{-\alpha s}|\varphi_1(s) - \varphi_2(s)| \, \d s  \leq C e^{-\alpha t} \mynorm \varphi_1  - \varphi_2 \mynorm_\alpha \frac{e^{\alpha t} - 1}{\alpha} \leq \frac{C}{\alpha} \mynorm \varphi_1  - \varphi_2 \mynorm_\alpha \, ,
        \end{split}
    \]
    hence, choosing $\alpha > 0$ such that $C_\alpha = \frac{C}{\alpha} < 1$, 
    \[
    \mynorm     \Psi(\xi,\varphi_1,w) - \Psi(\xi,\varphi_2,w) \mynorm_\alpha \leq C_\alpha  \mynorm \varphi_1  - \varphi_2 \mynorm_\alpha \, ,
    \]
    and thus it has a unique fixed point.

    We now observe that the solution operator $\S \colon \R^2 \x C^0 \to C^0$ is continuous. Indeed, it is Lipschitz with respect to both variables. Letting $\varphi_1 = \S(\xi_1,w)$ and $\varphi_2 = \S(\xi_2, w)$, by the Lipschitz continuity of $b_{\bar X}$, we have that 
    \[
        \begin{split}
            |\varphi_1(t) - \varphi_2(t)| & \leq |\xi_1 - \xi_2| + \int_0^t |b_{\bar X}(s, \varphi_1(s)) - b_{\bar X}(s, \varphi_2(s))| \, \d s \\
            & \leq |\xi_1 - \xi_2| + C \int_0^t | \varphi_1(s) - \varphi_2(s)| \, \d s \, ,
        \end{split}
    \] 
    thus, by Gr\"onwall's inequality, 
    \[
        |\varphi_1(t) - \varphi_2(t)| \leq |\xi_1 - \xi_2| e^{Ct} \implies \|\S(\xi_1,w) - \S(\xi_2,w) \|_\infty \leq |\xi_1 - \xi_2| e^{CT} \, .
    \]
    Analogously, letting $\varphi_1 = \S(\xi,w_1)$ and $\varphi_2 = \S(\xi, w_2)$, by the Lipschitz continuity of $b_{\bar X}$, we have that 
    \[
        \begin{split}
            |\varphi_1(t) - \varphi_2(t)| & \leq  \int_0^t |b_{\bar X}(s, \varphi_1(s)) - b_{\bar X}(s, \varphi_2(s))| \, \d s + |w_1(t) - w_2(t)| \\
            & \leq  C \int_0^t | \varphi_1(s) - \varphi_2(s)| \, \d s  + \|w_1 - w_2\|_\infty\, ,
        \end{split}
    \] 
    thus, by Gr\"onwall's inequality, 
    \[
        |\varphi_1(t) - \varphi_2(t)| \leq \|w_1 - w_2\|_\infty e^{Ct} \implies \| \S(\xi,w_1) -  \S(\xi,w_2) \|_\infty \leq \|w_1 - w_2\|_\infty e^{CT} \, .
    \]

    We are now in a position to write the stochastic processes $(\bar Y_m(t))_{t \in [0,T]}$ as $Y_m(\cdot, \omega) = \S(Y_m^0(\omega), W_m(\cdot, \omega))$ for a.e.\ $\omega \in \Omega$. Note that $Y_1^0, \dots, Y_M^0 \colon \Omega \to \R^2$ and $W_1, \dots, W_M \colon \Omega \to C^0$ are independent random variables. It follows that $(\bar Y_1(t))_{t \in [0,T]}, \dots, (\bar Y_M(t))_{t \in [0,T]}$ are independent stochastic processes. This concludes the proof.

    \end{proof}
    
    \section{Mean-field limit for a large number of pirate ships} \label{sec:M to infty}

    In this section we study the limit of the problem as $M \to +\infty$. For this reason we will stress the dependence of initial data and solutions on $M$. Still, we do not stress dependence on $N$, not to overburden the notation.

    \subsection{Mean-field ODE/SDE/ODE limit model as $M\to +\infty$} 

    In the following theorem we shall describe convergence of solutions in terms of empirical measures. Given stochastic processes $(S_1(t))_{t\in [0,T]}, \dots, (S_M(t))_{t\in [0,T]}$ a.s.\ with continuous paths, we associate the empirical measure\footnote{The measurability of these random variables is proven with an argument analogous to the one in Footnote~\ref{footnote:random measures}, keeping in mind the separability of $C^0([0,T];\R^2)$.} $\nu_M \colon \Omega \to \Pcal(C^0([0,T];\R^2))$ defined for a.e.\ $\omega \in \Omega$ by 
        \[
            \nu_M(\cdot, \omega)  := \frac{1}{M} \sum_{m=1}^M \delta_{S_m(\cdot, \omega)} \, .
        \]
        (The first placeholder is kept free for the time variable.) If $\max_m\E(\|S_m\|_\infty) < +\infty$  then a.s.\ $\nu_M \in \Pcal_1(C^0([0,T];\R^2))$. Indeed, 
        \begin{equation} \label{eq:2212061303}
            \begin{split}
                \E\Big( \int_{C^0([0,T];\R^2)} \|\varphi\|_\infty \, \d  \nu_M(\cdot, \cdot)(\varphi) \Big) & = \frac{1}{M} \sum_{m=1}^M \E \Big( \int_{C^0([0,T];\R^2)}  \|\varphi\|_\infty \, \d  \delta_{S_m} \Big) \\
                & = \frac{1}{M} \sum_{m=1}^M   \E\big(\|S_m\|_\infty\big)  < + \infty \, .
            \end{split}
        \end{equation}
        We set $\nu_M(t,\omega) := (\ev_t)_\# \nu_M(\cdot,\omega)$ for all $\omega \in \Omega$ and $t \in [0,T]$. With a slight abuse of notation, we let $\nu_M(t)$ denote the random measure $\nu_M(t) \colon \Omega \to \Pcal(\R^2)$. 

    \begin{theorem} \label{thm:limit to averaged system}
        Assume the following:
        \begin{itemize}
            \item Let $(W_m(t))_{t \in [0,T]}$, $m \geq 1$, be a sequence of independent $\R^2$-valued Brownian motions;
            \item Let $X^0 = (X_1^0,\dots,X_N^0) \in \R^{2\x N}$;
            \item Let $Y^0 =(Y_1^0, \dots, Y_M^0)$, where $Y_1^0, \dots, Y_M^0$ are i.i.d.\ $\R^2$-valued random variables with $\E(|Y^0_m|) < +\infty$ and independent from the Brownian motions $(W_m(t))_{t \in [0,T]}$;
            \item Let $Z^0 = (Z_1^0,\dots,Z_L^0) \in \R^{2 \x L}$;
            \item Let $(W(t))_{t \in [0,T]}$ be a Brownian motion;
            \item Let $\bar Y^0$ be a $\R^2$-valued random variable identically distributed to $Y_1^0, \dots, Y_M^0$.
        \end{itemize}
        Let $u^M, u \in L^\infty([0,T];\U)$ be such that $u^M \wstar u$ weakly* in $L^\infty([0,T];\U)$.\footnote{In fact, by the boundedness of $\U$, this is equivalent to requiring that $u^M \weak u$ weakly in $L^1([0,T];\U)$.}  Let $(X^M(t))_{t \in [0,T]} = (X^M_1(t),\dots,X^M_N(t))_{t \in [0,T]}$, $(Y^M(t))_{t \in [0,T]} = (Y^M_1(t), \dots, Y^M_M(t))_{t \in [0,T]}$, and $Z^M = (Z^M_1,\dots,Z^M_L)$ be the unique strong solution to\footnote{This corresponds to the original ODE/SDE/ODE system~\eqref{eq:full model} with initial data $X^0$, $Y^0$, $Z^0$ and with control $u^M$. The solution is provided by Proposition~\ref{prop:solution to ODE/SDE}. We stressed the dependence on $M$ since we are interested in the limit as $M \to +\infty$.}
        \begin{equation} \label{eq:ODE random M}
            \left\{
                \begin{aligned}
                    & \d X^M_n(t)  = v^N_n(X^M(t))\Big(  \r(X^M_n(t)) + \frac{1}{M} \sum_{m=1}^M \Kcp(X^M_n(t) - Y^M_m(t))    \Big)\d t \, ,      \\
                    & \d Y^M_m(t) = \Big(\frac{1}{L} \sum_{\ell=1}^L \Kpg(Y^M_m(t) - Z^M_\ell(t))  - \frac{1}{N} \sum_{n=1}^N \Kpc(Y^M_m(t) - X^M_n(t)) \Big) \d t   + \sqrt{2\kappa} \, \d W_m(t) \, ,  \\
                    & \frac{\d Z^M_\ell}{\d t}(t) =  \frac{1}{L}\sum_{\ell'=1}^L \Kgg(Z^M_\ell(t) - Z^M_{\ell'}(t) )  + u^M_\ell(t)    \, ,  \\ 
                    & X^M_n(0)  = X_n^0 \ \text{a.s.,} \ Y^M_m(0) = Y_m^0 \ \text{a.s.,}  \ Z^M_\ell(0) =  Z_\ell^0 \, , \\
                    &  n = 1,\dots,N \, , \ m = 1,\dots,M \, , \ \ell = 1,\dots, L \, .
                \end{aligned}
            \right.
        \end{equation}
        Let $\nup_M$ be the empirical measures associated to $(Y^M_1(t))_{t \in [0,T]}, \dots, (Y^M_M(t))_{t \in [0,T]}$. Then there exist $\bar X = (\bar X_1,\dots, \bar X_N)$, $(\bar Y(t))_{t \in [0,T]}$, and $Z = (Z_1,\dots,Z_L)$ such that  
        \begin{equation} \label{eq:needed convergence}
            \E\Big( \max_n \|X^M_n - \bar X_n\|_\infty \Big) +   \int_0^T \E \big( \W_1(\nup_M(t),\mubp(t)) \big) \, \d t  + \|Z^M - Z\|_\infty \to 0 \ \text{as } M \to +\infty \, .
        \end{equation} 
        Moreover, $\bar X = (\bar X_1,\dots, \bar X_N)$, $(\bar Y(t))_{t \in [0,T]}$, and $Z = (Z_1,\dots,Z_L)$ are the unique strong solution to~\eqref{eq:ODE Y average}.\footnote{Corresponding to the initial data $X^0$, $\bar Y^0$, $Z^0$, with Brownian motion $W$, and control $u$. We recall that the solution is provided by Proposition~\ref{prop:solution to averaged ODE/SDE}.}
    \end{theorem}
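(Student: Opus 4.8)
The plan is to adapt the two-scale strategy of~\cite{AscCasSol}: first dispose of the decoupled coast-guard equation, then compare the coupled pirate processes with an auxiliary family of i.i.d.\ copies of the averaged pirate process driven by the \emph{limit} control $u$, and finally close a Gr\"onwall estimate whose only error term is an empirical-measure fluctuation. \emph{Step~1 (the $Z$-component).} The $Z^M_\ell$-equations are decoupled, and the drift $f_{u^M}$ has linear growth with a constant depending only on $\Kgg$ and $\U$; hence $\|Z^M\|_\infty\le C$ uniformly and the $Z^M$ are equi-Lipschitz, so Arzel\`a--Ascoli, passage to the limit in the integral formulation (using $u^M\wstar u$), and uniqueness for the limit ODE give $\|Z^M-Z\|_\infty\to0$, exactly as in \emph{Step~2} of the proof of Proposition~\ref{prop:existence of optimal control for ODE/SDE}.

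\emph{Step~2 (auxiliary i.i.d.\ processes).} For $m=1,\dots,M$ let $(\breve Y^M_m(t))_{t\in[0,T]}$ be the $Y$-component of the strong solution of the averaged system~\eqref{eq:ODE Y average} with Brownian motion $W_m$, initial datum $Y^0_m$ and control $u$ (Proposition~\ref{prop:solution to averaged ODE/SDE}). Since the $X$- and $Z$-components of~\eqref{eq:ODE Y average} depend on $W_m$ and $Y^0_m$ only through the common law $\mubp=\Law(\bar Y)$ of the $Y$-component and through $u$, they coincide with the limit objects $\bar X$ and $Z$; moreover, exactly as in Propositions~\ref{prop:identically distributed}--\ref{prop:propagation of chaos}, $\breve Y^M_1,\dots,\breve Y^M_M$ are i.i.d.\ with common law $\mubp$, so for each $s$ the variables $\breve Y^M_1(s),\dots,\breve Y^M_M(s)$ are i.i.d.\ with law $\mubp(s)$ on $\R^2$. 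Write $\breve\nu_M(s):=\frac1M\sum_{m=1}^M\delta_{\breve Y^M_m(s)}$.

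\emph{Step~3 (Gr\"onwall estimate).} Set $e_M(t):=\E\big(\max_n\sup_{[0,t]}|X^M_n-\bar X_n|\big)+\E\big(\frac1M\sum_m\sup_{[0,t]}|Y^M_m-\breve Y^M_m|\big)$, which is finite for each $M$ by Propositions~\ref{prop:solution to ODE/SDE} and~\ref{prop:solution to averaged ODE/SDE}. Subtracting the integral equations for $Y^M_m$ and $\breve Y^M_m$ (the additive $\sqrt{2\kappa}\,W_m$ terms cancel) and using the Lipschitz continuity of $\Kpg$ and $\Kpc$ bounds the $Y$-difference, a.s.\ and pointwise, by $C\int_0^t\big(\|Z^M-Z\|_\infty+|Y^M_m-\breve Y^M_m|+\max_n|X^M_n-\bar X_n|\big)\d s$. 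For the $X$-equations I split the integrand as $\big(v^N_n(X^M)-v^N_n(\bar X)\big)\bar V_n+v^N_n(X^M)\big(V_n-\bar V_n\big)$ with $\bar V_n:=\r(\bar X_n)+\Kcp*\mubp(s)(\bar X_n)$ and $V_n:=\r(X^M_n)+\frac1M\sum_m\Kcp(X^M_n-Y^M_m)$. The first term is controlled by $C\max_n|X^M_n-\bar X_n|$ because $\bar V_n$ is bounded by a constant depending only on the data (via $\max_n\|\bar X_n\|_\infty<\infty$ from Remark~\ref{rmk:bar X is bounded} and $\int|y|\,\d\mubp(s)(y)\le\E(\|\bar Y\|_\infty)<\infty$); in the second term $|v^N_n(X^M)|\le v_{\mathrm{max}}$ and, after inserting $\pm\frac1M\sum_m\Kcp(\bar X_n-\breve Y^M_m)$ and using Lipschitz continuity of $\Kcp$ and Kantorovich's duality, $|V_n-\bar V_n|\le C\big(|X^M_n-\bar X_n|+\frac1M\sum_m|Y^M_m-\breve Y^M_m|+\W_1(\breve\nu_M(s),\mubp(s))\big)$. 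Taking $\max_n$, the supremum over $[0,t]$, expectations and the average over $m$ gives $e_M(t)\le C\int_0^t e_M(s)\,\d s+CT\|Z^M-Z\|_\infty+C\int_0^T\E\big(\W_1(\breve\nu_M(s),\mubp(s))\big)\,\d s$, hence by Gr\"onwall $e_M(T)\le C\big(T\|Z^M-Z\|_\infty+\int_0^T\E(\W_1(\breve\nu_M(s),\mubp(s)))\,\d s\big)e^{CT}$.

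\emph{Step~4 (conclusion).} The first term vanishes by Step~1. For each fixed $s$, $\E\big(\W_1(\breve\nu_M(s),\mubp(s))\big)\to0$ by the Glivenko--Cantelli result recalled in Subsection~\ref{subsec:empirical measure}, applied to the i.i.d.\ samples $\breve Y^M_m(s)$ of the \emph{fixed} law $\mubp(s)$; since this quantity is bounded by $2\int|y|\,\d\mubp(s)(y)\le2\E(\|\bar Y\|_\infty)$, dominated convergence yields $\int_0^T\E(\W_1(\breve\nu_M(s),\mubp(s)))\,\d s\to0$, so $e_M(T)\to0$. In particular $\E(\max_n\|X^M_n-\bar X_n\|_\infty)\to0$, and writing $\W_1(\nup_M(t),\mubp(t))\le\W_1(\nup_M(t),\breve\nu_M(t))+\W_1(\breve\nu_M(t),\mubp(t))\le\frac1M\sum_m|Y^M_m(t)-\breve Y^M_m(t)|+\W_1(\breve\nu_M(t),\mubp(t))$ gives $\int_0^T\E(\W_1(\nup_M(t),\mubp(t)))\,\d t\le T\,e_M(T)+\int_0^T\E(\W_1(\breve\nu_M(t),\mubp(t)))\,\d t\to0$; together with Step~1 this is~\eqref{eq:needed convergence}, and the limit triple solves~\eqref{eq:ODE Y average} by construction. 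The main obstacle is Step~3: the congestion term must be split so that no only-$L^1(\P)$-bounded factor multiplies another such factor (hence pairing $v^N_n(X^M)-v^N_n(\bar X)$ with the bounded $\bar V_n$ rather than with $V_n$), and one must compare $Y^M_m$ with copies driven by the \emph{limit} control $u$ precisely so that the empirical-measure error in Step~4 is measured against the $M$-independent law $\mubp$.
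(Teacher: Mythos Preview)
Your proof is correct and follows essentially the same route as the paper: introduce the auxiliary i.i.d.\ copies $\breve Y^M_m$ of the averaged process (the paper calls them $\bar Y^M_m$), run a Gr\"onwall estimate that leaves only $\|Z^M-Z\|_\infty$ and the empirical-measure fluctuation $\int_0^T\E(\W_1(\breve\nu_M(s),\mubp(s)))\,\d s$ as error terms, and kill these via weak* convergence of the controls and the Glivenko--Cantelli lemma of Subsection~\ref{subsec:empirical measure}. The only cosmetic differences are that you track $\frac{1}{M}\sum_m\sup_{[0,t]}|Y^M_m-\breve Y^M_m|$ where the paper tracks $\max_m$, and you handle the $Z$-convergence via Arzel\`a--Ascoli plus uniqueness (as in Proposition~\ref{prop:existence of optimal control for ODE/SDE}) whereas the paper writes a direct Gr\"onwall bound against $R_M(t)=\big|\int_0^t(u^M-u)\,\d s\big|$; both work.
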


    \begin{proof} 
        To prove the result, we need to exploit an intermediate problem. For $m=1,\dots, M$, let $\bar X = (\bar X_1,\dots, \bar X_N)$, $(\bar Y^M_m(t))_{t \in [0,T]}$, and $Z = (Z_1,\dots,Z_L)$ be the unique strong solution~to\footnote{This corresponds to the averaged ODE/SDE/ODE system~\eqref{eq:ODE Y average} with initial data $X^0$, $Y^0_m$, $Z^0$, with Brownian motion $W_m$, and control $u$. The solution is provided by Proposition~\ref{prop:solution to averaged ODE/SDE}. Note that we applied Proposition~\ref{prop:identically distributed} to deduce that $(\bar Y^M_1(t))_{t \in [0,T]}, \dots, (\bar Y^M_M(t))_{t \in [0,T]}$ are identically distributed with common law $\mubp$ and the curve $\bar X$ does not depend on $m$ and $M$.}
        \begin{equation} \label{eq:m ODE Y average M}
            \left\{
                \begin{aligned}
                    & \frac{\d \bar X_{n}}{\d t}(t) = v^N_n(\bar X(t))\Big(  \r(\bar X_{n}(t)) + \Kcp*\mubp(t)(\bar X_{n}(t))    \Big)  \, ,      \\
                    & \d \bar Y^M_m(t) = \Big( \frac{1}{L} \sum_{\ell=1}^L \Kpg( \bar Y^M_m(t) - Z_\ell(t))  - \frac{1}{N} \sum_{n=1}^N \Kpc(\bar Y^M_m(t) - \bar X_{n}(t) ) \Big) \d t + \sqrt{2\kappa} \, \d W_m(t) \, ,  \\
                    & \frac{\d Z_\ell}{\d t}(t)  =   \frac{1}{L}\sum_{\ell'=1}^L \Kgg(Z_\ell(t) - Z_{\ell'}(t) )  + u_\ell(t)    \, ,  \\ 
                    & \bar X_{n}(0) =  X_n^0 \, , \  \ Z_\ell(0) =  Z_\ell^0 \, , \quad n = 1,\dots,N \, , \ \ell = 1,\dots, L \, , \\
                    & \bar Y^M_m(0)  = Y_m^0 \ \text{a.s.,} \quad \mubp  = \Law(\bar Y^M_m) \, .
                \end{aligned}
            \right.
        \end{equation}
        Our first task is to prove that  
        \begin{equation} \label{eq:intermediate convergence}
            \E\Big( \max_n \|X^M_n - \bar X_n\|_\infty\Big) + \E\Big(\max_m \|Y^M_m - \bar Y^M_m \|_\infty \Big) + \|Z^M - Z\|_\infty \to 0 \quad \text{as } M \to +\infty \, .
        \end{equation}  
        from which~\eqref{eq:needed convergence} will follow as shown in {\itshape Step 5} below.

        As in the previous proofs, let $C^0 := C^0([0,T];\R^2)$. Let us also consider the empirical measures\footnote{The random measure $\nubp_M \colon \Omega \to \Pcal(C^0)$ (empirical measure of $\bar Y^M_1, \dots, \bar Y^M_M$) must not be confused with $\nup_M \colon \Omega \to \Pcal(C^0)$ (empirical measure of $Y^M_1, \dots, Y^M_M$) or $\mubp \in \Pcal(C^0)$ (common law of the stochastic processes $(\bar Y^M_1(t))_{t \in [0,T]}, \dots, (\bar Y^M_M(t))_{t \in [0,T]}$ and $(\bar Y(t))_{t \in [0,T]}$).} $\nubp_M \colon \Omega \to \Pcal(C^0)$ associated to $(\bar Y^M_1(t))_{t \in [0,T]}, \dots, (\bar Y^M_M(t))_{t \in [0,T]}$. To be precise, we have that for a.e.\ $\omega \in \Omega$ 
        \[
            \nup_M(\cdot, \omega)  := \frac{1}{M} \sum_{m=1}^M \delta_{Y^M_m(\cdot, \omega)} \, , \quad \nubp_M(\cdot, \omega) := \frac{1}{M} \sum_{m=1}^M \delta_{\bar Y^M_m(\cdot,\omega)} \, .
        \]
        (The first placeholder is kept free for the time variable.) Notice that, in fact, a.s.\ $\nup_M \in \Pcal_1(C^0)$ and $\nubp_M \in \Pcal_1(C^0)$ by~\eqref{eq:2212061303} and since by Proposition~\ref{prop:solution to ODE/SDE} and Proposition~\ref{prop:solution to averaged ODE/SDE} we have that $\E(\max_m \|Y^M_m\|_\infty) < +\infty$ and $\E(\max_{m} \|\bar Y^M_m\|_\infty) < +\infty$, respectively.

        \step{1}  (Estimate of $|Y^M_m - \bar Y^M_m|$)   Using the fact that $Y^M_m$ and $\bar Y^M_m$ are strong solutions to~\eqref{eq:ODE random M} and~\eqref{eq:m ODE Y average M}, respectively, and by the Lipschitz continuity of $\Kpg$ and $\Kpc$ we have that a.s.\ for $0\leq s \leq t$ and $m=1,\dots,M$ 
        \[ 
            \begin{split}
                & |Y^M_m(s)-\bar Y^M_m(s)| \\
                &  \leq \int_0^s \Big( \frac{1}{L} \sum_{\ell=1}^L | \Kpg(Y^M_m(r) - Z^M_\ell(r) ) -  \Kpg(\bar Y_m(r) - Z_\ell(r)) | + \\
                & \hspace{2cm} + \frac{1}{N} \sum_{n=1}^N | \Kpc(Y^M_m(r) - X^M_n(r)) - \Kpc(\bar Y^M_m(r) - \bar X_n(r)) | \Big) \, \d r \\
                & \leq  \int_0^s  C \Big(  |Y^M_m(r) - \bar Y^M_m(r)| + \frac{1}{L} \sum_{\ell=1}^L  | Z^M_\ell(r) - Z_\ell(r) |+  \frac{1}{N} \sum_{n=1}^N |X^M_n(r) - \bar X_n(r)| \Big) \, \d r \\
                & \leq \int_0^t  C \Big( \max_n \sup_{0\leq r \leq s} |X^M_n(r) - \bar X_n(r)| + \max_{m'} \sup_{0\leq r \leq s}|Y^M_{m'}(r) - \bar Y^M_{m'}(r)| \Big) \, \d s + CT \|Z^M - Z\|_\infty \, ,
            \end{split}
        \]  
        the constant $C$ depending on $\Kpg$ and $\Kpc$. Taking the supremum in $s \in [0,t]$, the maximum in $m$ and then the expectation, we obtain that  for every $t \in [0,T]$ 
        \begin{equation} \label{eq:first bound on Y - barY}
            \begin{split}
                & \E \Big( \max_m \sup_{0\leq s \leq t}|Y^M_m(s) - \bar Y^M_m(s)| \Big) \\
                & \leq C \int_0^t     \E \Big( \max_m \sup_{0\leq r \leq s}|Y^M_m(r) - \bar Y^M_m(r)| + \max_n \sup_{0\leq r \leq s} |X^M_n(r) - \bar X_n(r)| \Big)  \, \d s \\
                & \quad  + CT \|Z^M - Z\|_\infty \, .
            \end{split}
        \end{equation} 
        
        \step{2} (Estimate of $|X^M_n - \bar X_n|$) 
        To estimate $|X^M_n(s) - \bar X_n(s)|$,  we rewrite 
        \begin{equation} \label{eq:Kcp in convolution form}
            \frac{1}{M} \sum_{m=1}^M \Kcp(X^M_n(t) - Y^M_m(t))  =  \int_{\R^2} \Kcp(X^M_n(t) - y) \, \d \nup_M(t)(y)  =  \Kcp*\nup_M(t)(X^M_n(t)) \, .
        \end{equation}
        Then, we exploit the properties of $v^N$, $\r$, and $\Kcp$ and~\eqref{eq:Kcp in convolution form} to get from~\eqref{eq:ODE random M} that a.s.\ for  $0\leq s \leq t$ and $n=1,\dots,N$
        \begin{equation} \label{eq:0311221610}
            \begin{split}
                & |X^M_n(s) - \bar X_n(s)| \leq \\
                & \leq \int_0^s \Big|  v^N_n(X^M(r))\Big(  \r(X^M_n(r)) + \frac{1}{M} \sum_{m=1}^M \Kcp(X^M_n(t) - Y^M_m(r))    \Big) + \\
                & \hspace{2cm} - v^N_n(\bar X(r))\Big(  \r(\bar X_n(r)) + \Kcp*\mubp(r)(\bar X_n(r))    \Big) \Big| \, \d r \\
                & \leq \int_0^s \|v^N\|_\infty \Big(  |\r(X^M_n(r)) -  \r(\bar X^M_n(r))| + |\Kcp*\nup_M(r)(X_n(r))  -  \Kcp*\mubp(r)(\bar X_n(r))| \Big) + \\
                & \hspace{2cm} + | v^N_n(X^M(r)) - v^N_n(\bar X(r))| \big|  \r(\bar X_n(r)) - \Kcp*\mubp(r)(\bar X_n(r))    \big| \, \d r  \, .
            \end{split}
        \end{equation}
        To estimate the term involving $|\Kcp*\nup_M(r)(X^M_n(r))  -  \Kcp*\mubp(r)(\bar X_n(r))|$ in~\eqref{eq:0311221610}, we exploit Kantorovich's duality and the Lipschitz continuity of $\Kcp$ to get that a.s.\ 
        \begin{equation} \label{eq:bound of KmuM - Kmu}  
            \begin{split}
                & |\Kcp*\nup_M(r)(X^M_n(r))  -  \Kcp*\mubp(r)(\bar X_n(r))| \\
                & = \Big| \int_{\R^2} \Kcp(X^M_n(r) - y) \, \d \nup_M(r)(y) -  \int_{\R^2} \Kcp(\bar X_n(r) - y) \, \d \mubp(r)(y) \Big| \\
                & \leq \Big| \int_{\R^2} \Kcp(X^M_n(r) - y) \, \d \Big( \nup_M(r) - \mubp(r) \Big) (y) \Big| \\
                & \quad + \Big| \int_{\R^2} \big(  \Kcp(X^M_n(r) - y) - \Kcp(\bar X_n(r) - y) \big) \, \d \mubp(t)(y) \Big| \\ 
                & \leq C \W_1(\nup_M(r), \mubp(r))  + C |X^M_n(r) - \bar X_n(r)| \\
                & \leq C \max_{n'}|X^M_{n'}(r) - \bar X_{n'}(r)| + C \W_1(\nup_M(r), \nubp_M(r)) +  C \W_1(\nubp_M(r), \mubp(r))  \, .
            \end{split}
        \end{equation}  
        We bound $\W_1(\nup_M(r), \nubp_M(r))$ using for a.e.\ $\omega \in \Omega$ as an admissible transport plan the diagonal transport $\gamma(\omega) = \frac{1}{N} \sum_{n=1}^N \delta_{(Y^M_n(r,\omega), \bar Y^M_n(r, \omega))}$ to obtain that a.s.\  
        \begin{equation} \label{eq:bound of W1} 
            \begin{split}
              \W_1(\nup_M(r), \nubp_M(r)) & \leq \int_{\R^2 \x \R^2} |y - y'| \, \d \gamma (y,y') = \frac{1}{M} \sum_{m=1}^M|Y^M_m(r) - \bar Y^M_m(r)| \\
              & \leq \max_m |Y^M_m(r) - \bar Y^M_m(r)| \,.
            \end{split}
        \end{equation}  

        To estimate the term involving $\Kcp*\mubp(r)(\bar X_n(r))$ in~\eqref{eq:0311221610}, we use the fact that $|\Kcp(z)| \leq |\Kcp(0)| + C|z|$ to get that  
        \begin{equation*} 
            \begin{split}
                & |\Kcp*\mubp(r)(\bar X_n(r))| \leq \int_{\R^2} | \Kcp(\bar X_n(r) - y) | \, \d \mubp(r)(y) \\
                & \leq \int_{\R^2} C (1 + |\bar X_n(r)| + |y|) \, \d \mubp(r)(y)   \leq C \Big( 1 + \max_{n'} \|\bar X_{n'} \|_\infty + \sup_{0 \leq r \leq T} \Big( \int_{\R^2} |y| \, \d \mubp(r)(y) \Big) \Big) \\
                & \leq C \Big( 1 + \max_{n'}\|\bar X_{n'}\|_\infty + \int_{\C^0} \| \varphi \|_\infty \, \d  \mubp(\varphi)  \Big)  \, ,
            \end{split}
        \end{equation*}  
        where in the last inequality we used that 
        \begin{equation*}  
            \begin{split}
                \int_{\R^2} |y| \, \d \mubp(r)(y) & = \int_{\R^2} |y| \, \d ((\ev_r)_\# \mubp)(y) = \int_{\C^0} |\ev_r(\varphi)| \, \d  \mubp(\varphi)   \leq \int_{\C^0} \| \varphi \|_\infty \, \d  \mubp(\varphi)  \, ,
            \end{split}
        \end{equation*} 
        which is finite, since $\mubp \in \Pcal_1(C^0)$ by Proposition~\ref{prop:solution to averaged ODE/SDE}. By Remark~\ref{rmk:bar X is bounded}   we recall that $\max_n \|\bar X_n \|_\infty$ is bounded by a constant depending on $\max_n \|X^0_n \|_\infty$, $T$, $\|v^N\|_\infty$, $\r$, and $\Kcp$. Hence  
        \begin{equation}\label{eq:bound of Kmu}
            |\Kcp*\mubp(r)(\bar X_n(r))| \leq C \, .
        \end{equation}

        Then we can proceed with the estimate in~\eqref{eq:0311221610}: By~\eqref{eq:bound of KmuM - Kmu}--\eqref{eq:bound of Kmu} and by exploiting also the Lipschitz continuity of $\r$ and $v^N$, we obtain that 
        \[
            \begin{split}
                & |X^M_n(s) - \bar X_n(s)|  \\
                &  \leq C  \int_0^s \max_{n'}|X^M_{n'}(r) - \bar X_{n'}(r)| \, \d r + C  \int_0^s \max_m |Y^M_m (r) - \bar Y^M_m (r)| \, \d r \\
                & \hspace{2cm} + C \int_0^s \W_1(\nubp_M(r), \mubp(r)) \, \d r \\
                & \leq C  \int_0^t \max_{n'}\sup_{0 \leq r \leq s} |X^M_{n'}(r) - \bar X_{n'}(r)| \, \d s + C  \int_0^t \max_{m}\sup_{0 \leq r \leq s} |Y^M_m(r) - \bar Y^M_m(r)| \, \d s \\
                & \hspace{2cm} + C \int_0^T \W_1(\nubp_M(s), \mubp(s)) \, \d s   \, .
            \end{split}
        \]  
         Taking the supremum in $s$, the maximum in $n$, and then the expectation, we  obtain that for every $t \in [0,T]$ 
        \begin{equation} \label{eq:first bound on X - barX} 
            \begin{split}
                & \E \Big( \max_n \sup_{0 \leq s \leq t} |X^M_n(s) - \bar X_n(s)| \Big) \leq  C  \int_0^t \E \Big( \max_n \sup_{0 \leq r \leq s} |X^M_n(r) - \bar X_n(r)| \Big)  \, \d s \\
                & \quad + C  \int_0^t \E \Big( \max_m \sup_{0 \leq r \leq s} |Y^M_m(r) - \bar Y^M_m(r)| \Big)  \, \d s  +  C \int_0^T \E \big( \W_1(\nubp_M(s), \mubp(s)) \big) \, \d s   \, .
            \end{split}
        \end{equation}  

        \step{3} (Gr\"onwall's inequality) Putting together \eqref{eq:first bound on Y - barY} and \eqref{eq:first bound on X - barX} we have that for every $t \in [0,T]$ 
        \[  
            \begin{split}
                & \E \Big( \max_n \sup_{0 \leq s \leq t} |X^M_n (s) - \bar X_n (s)|  + \max_m \sup_{0\leq s \leq t}|Y^M_m (s) - \bar Y^M_m (s)| \Big)  \\
                & \leq C \int_0^t    \E \Big( \max_n \sup_{0\leq r \leq s} |X^M_n (r) - \bar X_n (r)|+ \max_m \sup_{0\leq r \leq s}|Y^M_m (r) - \bar Y^M_m (r)| \Big)  \, \d s  \\
                & \quad + CT \| Z^M - Z \|_\infty +   C \int_0^T \E \big( \W_1(\nubp_M(s), \mubp(s)) \big) \, \d s \, .
            \end{split}
        \]  
        By Gr\"onwall's inequality, we deduce that for every $t \in [0,T]$ 
        \[ 
            \begin{split}
                & \E \Big( \max_n \sup_{0 \leq s \leq t} |X^M_n (s) - \bar X_n (s)| + \max_m \sup_{0\leq s \leq t}|Y^M_m (s) - \bar Y^M_m (s)| \Big) \\
                & \quad  \leq C e^{Ct} \Big( CT \| Z^M - Z \|_\infty + \int_0^T \E \big( \W_1(\nubp_M(s), \mubp(s)) \big) \, \d s  \Big) \, .
            \end{split}
        \]  
        In particular, 
        \[
          \begin{split}
            & \E \Big( \max_n \|X^M_n - \bar X^M_n \|_\infty + \max_m \|Y^M_m  - \bar Y^M_m \|_\infty \Big) \\
            &  \leq C \Big( \| Z^M - Z \|_\infty + \int_0^T \E \big( \W_1(\nubp_M(s), \mubp(s)) \big) \, \d s \Big) =: \alpha(M) \, ,
          \end{split}
        \]  
        where the constant depends additionally on $T$. 

        \step{4} (Convergence to zero of $\alpha(M)$)
        To conclude the proof, we show that $\alpha(M)  \to 0$ as $M\to +\infty$. 
        
        \substep{4}{1} Let us show that $\|Z^M - Z\|_\infty \to 0$ as $M \to +\infty$. We start by observing that by \eqref{eq:ODE random M} and \eqref{eq:m ODE Y average M}
        \[
            \begin{split}
                & | Z^M_\ell(t) - Z_\ell(t) | \\
                &  \leq \frac{1}{L} \sum_{\ell' =1}^L \int_0^t  |\Kgg(Z^M_{\ell}(s) - Z^M_{\ell'}(s) ) - \Kgg(Z_{\ell}(s) - Z_{\ell'}(s)) |  \,   \d s + \Big| \int_0^t ( u^M_{\ell}(s) - u_\ell(s)  )  \, \d s \Big| \\
                & \leq \int_0^t C |Z^M(s) - Z(s)| \, d s + \Big| \int_0^t ( u^M(s) - u(s)  )  \, \d s \Big| \, ,
            \end{split}
        \]
        where the constant $C$ depends on $\Kgg$. By Gr\"onwall's inequality, it follows that 
        \[
            |Z^M(t) - Z(t)| \leq R_M(t)  + \int_0^t R_M(s) C e^{C(t-s)} \, \d s \leq  R_M(t)  + C e^{CT} \int_0^T R_M(s)  \, \d s \, ,
        \]
        where $ R_M(t) = \Big| \int_0^t ( u^M(s) - u(s)  )  \, \d s \Big|$, hence
        \[
        \|Z^M - Z\|_\infty \leq C \|R_M\|_\infty \, .  
        \]
        Since $u^M \wstar u$ weakly* in $L^\infty([0,T];\U)$, we have that $R_M(t) \to 0$ for every $t \in [0,T]$. Moreover, by the boundedness of $\U$, $R_M(t)$ are equibounded and equi-Lipschitz. It follows that $\|R_M\|_\infty \to 0$, thus $\|Z^M - Z\|_\infty \to 0$.

        \substep{4}{2} Let us show that $\int_0^T \E \big( \W_1(\nubp_M(s), \mubp(s)) \big) \, \d s \to 0$ as $M \to +\infty$. 

        To show this, we apply the discussion in Subsection~\ref{subsec:empirical measure} about the approximation of a law (here played by $\mubp(s)$) with empirical measures on independent samples of the law (here played by $\nubp_M(s)$). Let us check that all the assumptions hold true. For every $s \in [0,T]$ we have that $\mubp(s) \in \Pcal_1(\R^d)$. This follows from the fact that, by Proposition~\ref{prop:identically distributed}, $\mubp = \Law(\bar Y^M_1) = \dots = \Law(\bar Y^M_M) = \Law(\bar Y)$, thus
        \begin{equation} \label{eq:mubps is in Pcal1}
            \begin{split}
                \int_{\R^2} |y| \, \d \mubp(s)(y) & = \int_{\R^2} |y| \, \d ((\ev_s)_\# (\bar Y)_\# \P) (y) = \int_{\Omega} |\bar Y(s,\omega)| \, \d \P(\omega) \\
                & = \E(|\bar Y(s)|) \leq \E(\|\bar Y\|_\infty) < + \infty \, ,
            \end{split}
        \end{equation}
        where the finiteness of $\E(\|\bar Y\|_\infty)$ follows from Proposition~\ref{prop:solution to averaged ODE/SDE}. Moreover, the random variables $\bar Y^M_1(s), \dots, \bar Y^M_M(s)$ are i.d.\ with law $(\bar Y^M_m(s,\cdot))_\# \P = (\ev_s)_\# (\bar Y^M_m)_\# \P = (\ev_s)_\# \mubp = \mubp(s)$. Finally, by Proposition~\ref{prop:propagation of chaos} we have that $(\bar Y^M_1(t))_{t \in [0,T]}, \dots, (\bar Y^M_M(t))_{t \in [0,T]}$ are independent stochastic processes, thus, in particular, $\bar Y^M_1(s), \dots, \bar Y^M_M(s)$ are independent random variables. By~\cite[Lemma~4.7.1]{PanZem} we conclude that 
        \[
            \E\big(\W_1(\nubp_M(s), \mubp(s))\big) \to 0 \quad \text{ for every $s \in [0,T]$}\, , 
        \]
        as $M \to +\infty$. Let us now show that $s \mapsto \E\big(\W_1(\nubp_M(s), \mubp(s))\big)$ is dominated.  Indeed, since $\bar Y_1(s), \dots, \bar Y_M(s)$ are identically distributed and by~\eqref{eq:mubps is in Pcal1}, for every $s \in [0,T]$ we have that 
        \[
            \begin{split}
                & \E \big( \W_1(\nubp_M(s), \mubp(s)) \big)  \leq \E\big( \W_1(\nubp_M(s), \delta_0) \big) +   \W_1(\mubp(s), \delta_0)  \\
                & \leq \E\Big( \int_{\R^2} |y| \, \d \nubp_M(s)(y)  \Big) +   \int_{\R^2} |y| \, \d \mubp(s)(y) \leq  \frac{1}{M} \sum_{m=1}^M \E( |\bar Y^M_m(s)|) +   \int_{\R^2} |y| \, \d \mubp(s)(y) \\
                & \leq   \E( |\bar Y(s)|) +   \int_{\R^2} |y| \, \d \mubp(s)(y) \leq 2 \E( \|\bar Y\|_\infty)   < +\infty \, ,
            \end{split}
        \]
        where the finiteness of the last term follows from Proposition~\ref{prop:solution to averaged ODE/SDE}. We conclude that 
        \begin{equation} \label{eq:int of W1 to zero}
            \int_0^T \E \big( \W_1(\nubp_M(s), \mubp(s)) \big) \, \d s \to 0 
        \end{equation}
        as $M \to +\infty$. This concludes the proof of \eqref{eq:intermediate convergence}. 

        \step{5} (Conclusion with proof of~\eqref{eq:needed convergence}) By~\eqref{eq:bound of W1}, we have that 
        \[
         \int_0^T \E\big( \W_1(\nup_M(s), \nubp_M(s))  \big)    \, \d s \leq T \E\Big( \max_m \|Y^M_m - \bar Y^M_m\|_\infty \Big) \, .  
        \]
        Combining this with~\eqref{eq:int of W1 to zero} and~\eqref{eq:intermediate convergence}, we obtain~\eqref{eq:needed convergence} and we conclude the proof.

    \end{proof}

\begin{proposition} \label{prop:PDE formulation of ODE/SDE/ODE}
    Under the assumptions of Theorem~\ref{thm:limit to averaged system}, the curve $\bar X = (\bar X_1,\dots,\bar X_N)$, the law $\mubp \in \Pcal_1(C^0([0,T];\R^2))$, and the curve $Z = (Z_1,\dots,Z_L)$ from~\eqref{eq:m ODE Y average M} are solutions to the ODE/PDE/ODE system: 
    \begin{equation}  \label{eq:ODE/PDE N}
        \left\{
            \begin{aligned}
                & \frac{\d \bar X_n}{\d t}(t) = v^N(\bar X(t))\Big(  \r(\bar X_n(t)) + \Kcp*\mubp(t)(\bar X_n(t))    \Big)  \, ,      \\
                & \de_t \mubp - \kappa \Delta_y \mubp + \div_y\Big( \Big( \frac{1}{L} \sum_{\ell=1}^L \Kpg(\cdot - Z_\ell(t)) - \frac{1}{N} \sum_{n=1}^N \Kpc(\cdot - \bar X_n(t))  \Big)\mubp \Big)  = 0 \, ,\\
                & \frac{\d Z_\ell}{\d t}(t)  = \Big( \frac{1}{L}\sum_{\ell'=1}^L \Kgg(Z_\ell(t) - Z_{\ell'}(t) )  + u_\ell(t)  \Big) \d t \, ,  \\ 
                & \bar X_n(0) =  X_n^0 \, , \  \ Z_\ell(0) =  Z_\ell^0 \, , \quad n = 1,\dots,N \, , \ \ell = 1,\dots, L \, , \\
                & \mubp(0)  = \Law(\bar Y^0) \, ,
            \end{aligned}
        \right.
    \end{equation}
    where the parabolic PDE is understood in the sense of distributions.\footnote{To be precise, we regard $\mubp \in \Pcal_1(C^0([0,T];\R^2))$ as the distribution defined by the duality
        \[
          \int_{-\infty}^0 \int_{\R^2} \xi(t,y) \, \d \mubp(0)(y) \, \d t + \int_0^T \int_{\R^2} \xi(t,y) \, \d \mubp(t)(y) \, \d t  \quad \text{ for every } \xi \in C^\infty_c((-\infty,T) \x \R^2)) \, .
        \]
        Note that the function $t \mapsto \int_{\R^2} \xi(t,y) \, \d \mubp(t)(y) = \int_{C^0([0,T];\R^2)} \xi(t,\varphi(t)) \, \d \mu(\varphi)$ is continuous in $t$, \eg, by the Dominated Convergence Theorem. A solution to the PDE in the sense of distributions satisfies 
        \[
            \begin{split}
                & \int_{\R^2} \xi(0,y) \, \d \Law(\bar Y^0_1)(y) + \int_0^T \int_{\R^2} \Big[  \de_t \xi(t, y)  + \kappa \Delta_y \xi(t,y) \\
                & \quad \quad +  \Big( \frac{1}{L} \sum_{\ell=1}^L \Kpg( y - Z_\ell(t) )  - \frac{1}{N} \sum_{n=1}^N \Kpc(y - \bar X_n(t)) \Big) \cdot \nabla_y \xi(t, y) \Big] \, \d \mubp(t)(y) \, \d t = 0 
            \end{split}
        \]
        for every $\xi \in C^\infty_c((-\infty,T) \x \R^2))$. \label{footnote:distributional solution}}
\end{proposition}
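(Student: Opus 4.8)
The plan is to recognize that the only substantive content of the statement is the Fokker--Planck equation for $\mubp$, and to derive it from It\^o's formula applied to the stochastic process $\bar Y$. Indeed, the first and third equations in~\eqref{eq:ODE/PDE N}, together with the initial conditions $\bar X_n(0) = X_n^0$ and $Z_\ell(0) = Z_\ell^0$, are literally the equations for $\bar X$ and $Z$ in~\eqref{eq:m ODE Y average M} (equivalently~\eqref{eq:ODE Y average}), which these curves solve by construction; and $\mubp(0) = (\ev_0)_\# \mubp = \Law(\bar Y(0)) = \Law(\bar Y^0)$. So there is nothing to prove for those components, and the task reduces to the parabolic PDE.

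For the PDE, recall from the proof of Proposition~\ref{prop:solution to averaged ODE/SDE} that $\bar Y$ is the strong solution, a.s.\ with continuous paths, to $\d \bar Y(t) = b_{\bar X}(t,\bar Y(t))\,\d t + \sqrt{2\kappa}\,\d W(t)$ with $\bar Y(0) = \bar Y^0$, where $b_{\bar X}(t,y) = \frac1L \sum_{\ell=1}^L \Kpg(y - Z_\ell(t)) - \frac1N \sum_{n=1}^N \Kpc(y - \bar X_n(t))$ is continuous in $t$, globally Lipschitz in $y$, and of linear growth (see~\eqref{eq:btildeX is Lipschitz} and the estimate following it). Fix a test function $\xi \in C^\infty_c((-\infty,T) \x \R^2)$. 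Since the dispersion matrix is $\sigma = \sqrt{2\kappa}\,\Id_2$, so that $\sigma \sigma^\top = 2\kappa\,\Id_2$ and the associated second-order operator is $\kappa \Delta_y$, It\^o's formula applied to $t \mapsto \xi(t,\bar Y(t))$, integrated on $[0,T]$ and using $\xi(T,\cdot)\equiv 0$, gives
\[
  -\xi(0,\bar Y^0) = \int_0^T \Big( \de_t \xi + b_{\bar X} \cdot \nabla_y \xi + \kappa \Delta_y \xi \Big)(t,\bar Y(t)) \, \d t + \sqrt{2\kappa} \int_0^T \nabla_y \xi(t,\bar Y(t)) \cdot \d W(t) \, .
\]
Next I would take expectations. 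Since $\nabla_y \xi$ is bounded, the integrand of the It\^o integral lies in $L^2(\Omega \x [0,T])$, hence that term is a genuine martingale with zero expectation; and since $\de_t\xi$, $\nabla_y\xi$, $\Delta_y\xi$ are bounded and compactly supported while $b_{\bar X}$ has linear growth with $\E(\|\bar Y\|_\infty) < +\infty$ by Proposition~\ref{prop:solution to averaged ODE/SDE}, Fubini's theorem applies and yields
\[
  -\E\big(\xi(0,\bar Y^0)\big) = \int_0^T \E\Big[ \Big( \de_t \xi + b_{\bar X} \cdot \nabla_y \xi + \kappa \Delta_y \xi \Big)(t,\bar Y(t)) \Big] \, \d t \, .
\]
Finally, rewriting each expectation of a function of $\bar Y(t)$ as an integral against $\mubp(t) = (\ev_t)_\# \mubp = \Law(\bar Y(t))$, and $\E(\xi(0,\bar Y^0)) = \int_{\R^2} \xi(0,y)\,\d\Law(\bar Y^0)(y) = \int_{\R^2}\xi(0,y)\,\d\mubp(0)(y)$, one obtains exactly the distributional identity in the footnote to the statement, which is the weak formulation of the parabolic PDE.

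The main technical point — and essentially the only one requiring care — is the justification that the stochastic integral has zero expectation, i.e.\ that $\nabla_y\xi(\cdot,\bar Y(\cdot))$ genuinely belongs to $L^2(\Omega\x[0,T])$ so that the It\^o integral is a true (not merely local) martingale; here this is immediate because $\nabla_y\xi$ is bounded. A secondary point is the applicability of It\^o's formula for the time-dependent function $\xi$, which holds since $\xi \in C^{1,2}$ and $b_{\bar X}(\cdot,\bar Y(\cdot))$ is a.s.\ integrable on $[0,T]$ thanks to its linear growth and the continuity of the paths of $\bar Y$. Everything else — the exchange of $\E$ and $\int_0^T$ via Fubini, and the push-forward identities $\E(\phi(\bar Y(t))) = \int_{\R^2} \phi(y)\,\d\mubp(t)(y)$ — is routine, and the continuity in $t$ of $t \mapsto \int_{\R^2}\xi(t,y)\,\d\mubp(t)(y)$ noted in the footnote follows from dominated convergence.
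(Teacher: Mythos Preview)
Your proof is correct and follows essentially the same route as the paper: apply It\^o's formula to $\xi(t,\bar Y(t))$, take expectations (the stochastic integral vanishes), and rewrite expectations as integrals against $\mubp(t)$. If anything, you are slightly more explicit than the paper in justifying the vanishing of the It\^o integral and the use of Fubini.
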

\begin{proof}
    We exploit the fact that $\mubp$ is the law of the stochastic processes $(\bar Y(t))_{t \in [0,T]}$, where $(\bar Y(t))_{t \in [0,T]}$ solves the SDE 
    \[
            \left\{
                \begin{aligned}
                    & \d \bar Y(t) = \Big( \frac{1}{L} \sum_{\ell=1}^L \Kpg(\bar Y(t) - Z_\ell(t))  - \frac{1}{N} \sum_{n=1}^N \Kpc(\bar Y(t) - \bar X_n(t) ) \Big) \d t + \sqrt{2\kappa} \, \d W(t) \, ,  \\
                    & \bar Y(0)  = \bar Y^0 \ \text{a.s.}
                \end{aligned}
            \right.
    \]   
    Let us fix a test function $\xi \in C^\infty_c((-\infty,T) \x \R^2)$. By It\^{o}'s formula \cite[Theorem~6.4]{Mao}, we have that $(\xi(t,\bar Y(t)))_{t \in [0,T]}$ is an It\^{o} process solving the SDE
    \[
        \begin{split}
            & \d \big (\xi(t, \bar Y(t))  \big) = \de_t \xi(t, \bar Y(t)) \, \d t + \kappa \Delta_y \xi(t,\bar Y(t)) \, \d t \\ 
            & \quad +  \Big( \frac{1}{L} \sum_{\ell=1}^L \Kpg(\bar Y(t) - Z_\ell(t))  - \frac{1}{N} \sum_{n=1}^N \Kpc(\bar Y(t) - \bar X_n(t) ) \Big) \cdot \nabla_y \xi(t, \bar Y(t)) \, \d t \\
            & \quad     + \nabla_y \xi(t,\bar Y(t)) \cdot  \d W(t)
        \end{split}
    \]
    with initial datum $\xi(0,\bar Y^0)$. This means that a.s.\ for every $t \in [0,T]$ 
    \[
        \begin{split}
            &  \xi(t, \bar Y(t)) = \xi(0,\bar Y^0) +\int_0^t \Big[  \de_t \xi(s, \bar Y(s))  + \kappa \Delta_y \xi(s,\bar Y(s))    \\ 
            & \quad +   \Big(\frac{1}{L} \sum_{\ell=1}^L \Kpg(\bar Y(s) - Z_\ell(s) )  - \frac{1}{N} \sum_{n=1}^N \Kpc( \bar Y(s) - \bar X_n(s) ) \Big) \cdot \nabla_y \xi(s, \bar Y(s)) \Big] \, \d s \\
            & \quad + \int_0^t  \nabla_y \xi(s,\bar Y(s)) \cdot \d W(s) \, .
        \end{split}
    \]
    By \cite[Lemma~5.4]{Mao} we have that 
    \[
    \E \Big( \int_0^t  \nabla_y \xi(s,\bar Y(s)) \cdot \d W(s) \Big)  = 0 \, ,
    \]
    thus, taking the expectation, we obtain in particular that
    \[
        \begin{split}
            &  \E\big(\xi(T, \bar Y(T)) \big) = \E\big(\xi(0,\bar Y^0)\big) + \int_0^T \E \Big[  \de_t \xi(t, \bar Y(t))  + \kappa \Delta_y \xi(t,\bar Y(t))    \\ 
            & \quad +   \Big( \frac{1}{L} \sum_{\ell=1}^L \Kpg( \bar Y(t) - Z_\ell(t))  - \frac{1}{N} \sum_{n=1}^N \Kpc(\bar Y(t) - \bar X_n(t) ) \Big) \cdot \nabla_y \xi(t, \bar Y(t)) \Big] \, \d t \, .
        \end{split}
    \]
    Using the fact that $\mubp(t) = \Law(\bar Y(t))$ and $\xi(T,\cdot) \equiv 0$, we get that 
    \[
        \begin{split}
            &  0 = \int_{\R^2} \xi(T, y) \, \d \mubp(T)(y)  = \int_{\R^2} \xi(0,y) \, \d \Law(\bar Y^0)(y) + \int_0^T \int_{\R^2} \Big[  \de_t \xi(t, y)  + \kappa \Delta_y \xi(t,y)    \\ 
            & \quad +   \Big( \frac{1}{L} \sum_{\ell=1}^L \Kpg( y - Z_\ell(t) )  + \frac{1}{N} \sum_{n=1}^N \Kpc(y - \bar X_n(t)) \Big) \cdot \nabla_y \xi(t, y) \Big] \, \d \mubp(t)(y) \, \d t \, .
        \end{split}
    \]
    This concludes the proof.
\end{proof}

\subsection{Limit of optimal control problems as $M \to + \infty$} Let us consider the following cost functional for the limit problem obtained in~\eqref{thm:limit to averaged system}. Let $\J_N \colon L^\infty([0,T]; \U) \to \R$ be defined for every $u \in L^\infty([0,T];\U)$ by 
\begin{equation} \label{def:JN}
    \J_N(u) :=   \frac{1}{2}   \int_0^T    |u(t)|^2 \d t + \frac{1}{N}   \sum_{n=1}^N  \int_0^T  \int_{\R^2} \Hd(\bar X_n(t) - y) \, \d \mubp(t)(y) \, \d t  \, , 
\end{equation}
where $\bar X = (\bar X_1, \dots, \bar X_N)$ and $(\bar Y(t))_{t \in [0,T]}$ are the unique strong solutions to~\eqref{eq:ODE Y average} provided by Proposition~\ref{prop:solution to averaged ODE/SDE}.
    Notice that the definition of $\J_N$ does not depend on the particular initial random datum $\bar Y^0$, but only on its law, since this is also the case for $\mubp$ by Proposition~\ref{prop:identically distributed}.

\begin{theorem} \label{thm:Gamma convergence for M}
    Let us fix $N \geq 1$. Under the assumptions of Theorem~\ref{thm:limit to averaged system}, the sequence of functionals $(\J_{N,M})_{M \geq 1}$ $\Gamma$-converges to $\J_N$ as $M \to +\infty$ with respect to the weak* topology in $L^\infty([0,T];\U)$.\footnote{Note that the weak* convergence in $L^\infty([0,T];\U)$ is metrizable, since $\U$ is bounded, hence we can use the sequential characterization of $\Gamma$-limits, \cf~\cite[Proposition~8.1]{DM}. \label{footnote:metrizability}}
\end{theorem}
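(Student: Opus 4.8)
The plan is to establish the two conditions in the sequential definition of $\Gamma$-convergence — the $\liminf$ inequality and the existence of a recovery sequence — which is legitimate since, as observed in the statement, the weak* topology on the bounded set $L^\infty([0,T];\U)$ is metrizable. Both parts hinge on the convergence proved in Theorem~\ref{thm:limit to averaged system}: if $u^M \wstar u$ in $L^\infty([0,T];\U)$ and $(X^M,Y^M,Z^M)$ is the strong solution of~\eqref{eq:ODE random M} with control $u^M$, while $(\bar X,\bar Y,Z)$ is the strong solution of~\eqref{eq:ODE Y average} with control $u$ and $\mubp = \Law(\bar Y)$, then, denoting by $\nup_M$ the empirical measure of $Y^M_1,\dots,Y^M_M$,
\[
    \E\Big(\max_n \|X^M_n - \bar X_n\|_\infty\Big) + \int_0^T \E\big(\W_1(\nup_M(t),\mubp(t))\big)\,\d t + \|Z^M - Z\|_\infty \to 0 \quad \text{as } M \to +\infty \, .
\]
Note that $\J_{N,M}$ and $\J_N$ both take finite values, since $\Hd$ is bounded and $\U$ is compact.

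For the $\liminf$ inequality I would fix an arbitrary sequence $u^M \wstar u$ and split $\J_{N,M}(u^M)$, as in~\eqref{def:expected cost}, into the control term $\tfrac12\int_0^T |u^M(t)|^2\,\d t$ and the ``danger'' term. The control term is weakly* lower semicontinuous, being (up to the factor $\tfrac12$) the square of the $L^2$-norm, so $\liminf_M \tfrac12\int_0^T |u^M(t)|^2\,\d t \geq \tfrac12\int_0^T |u(t)|^2\,\d t$. For the danger term I would first rewrite $\tfrac{1}{M}\sum_{m=1}^M \Hd(X^M_n(t) - Y^M_m(t)) = \int_{\R^2} \Hd(X^M_n(t) - y)\,\d\nup_M(t)(y)$, and then estimate, using that $\Hd$ is Lipschitz, that $\nup_M(t)$ and $\mubp(t)$ are probability measures, and Kantorovich's duality,
\begin{align*}
    & \Big| \int_{\R^2} \Hd(X^M_n(t) - y)\,\d\nup_M(t)(y) - \int_{\R^2} \Hd(\bar X_n(t) - y)\,\d\mubp(t)(y) \Big| \\
    & \qquad \leq \Lip(\Hd)\big( |X^M_n(t) - \bar X_n(t)| + \W_1(\nup_M(t),\mubp(t)) \big) \, .
\end{align*}
Integrating in $t$, averaging over $n$, and taking expectations, the displayed convergence above forces the danger term of $\J_{N,M}(u^M)$ to converge to $\tfrac{1}{N}\sum_{n=1}^N \int_0^T \int_{\R^2} \Hd(\bar X_n(t) - y)\,\d\mubp(t)(y)\,\d t$, which is exactly the danger term of $\J_N(u)$. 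Since this part of the cost converges (and is not merely bounded below), superadditivity of the $\liminf$ yields $\liminf_M \J_{N,M}(u^M) \geq \J_N(u)$.

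For the recovery sequence I would take the constant sequence $u^M := u$, which converges weakly* to $u$. Then the control term of $\J_{N,M}(u)$ equals $\tfrac12\int_0^T |u(t)|^2\,\d t$ for every $M$, and the estimate above, applied with $u^M = u$, shows that the danger term converges to that of $\J_N(u)$; hence $\J_{N,M}(u) \to \J_N(u)$ and in particular $\limsup_M \J_{N,M}(u) \leq \J_N(u)$. Together with the $\liminf$ inequality this establishes the $\Gamma$-convergence. I do not anticipate a serious obstacle: the substantive analytic work has already been carried out in Theorem~\ref{thm:limit to averaged system}, and the only point requiring care is that the kernel $\Hd$ depends on both the commercial trajectories and the criminal distribution, so one must exploit \emph{both} the convergence $X^M_n \to \bar X_n$ and the $\W_1$-convergence $\nup_M(t) \to \mubp(t)$ (on average in $t$ and $\omega$); one should also keep in mind that the $\mubp$ arising as the limit is precisely the one entering the definition of $\J_N(u)$, again by Theorem~\ref{thm:limit to averaged system}.
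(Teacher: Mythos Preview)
Your proposal is correct and follows essentially the same approach as the paper: both split the cost into the control term (handled by weak lower semicontinuity of the $L^2$-norm) and the danger term (shown to actually converge via the empirical-measure reformulation, the Lipschitz continuity of $\Hd$, and Kantorovich's duality, with Theorem~\ref{thm:limit to averaged system} supplying the needed convergence), and both take the constant sequence $u^M = u$ as the recovery sequence. The only cosmetic difference is that you combine the two Lipschitz/duality estimates into a single inequality, whereas the paper displays them as two separate terms.
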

\begin{proof}
    \step{1} (Asymptotic lower bound). Let us fix a sequence of controls $(u^M)_{M \geq 1}$, $u^M \in L^\infty([0,T];\U)$ such that $u^M \wstar u$ weakly* in $L^\infty([0,T];\U)$ as $M \to +\infty$ . Let us show that 
    \begin{equation}  \label{eq:liminf inequality M}
        \J_N(u) \leq \liminf_{M \to +\infty} \J_{N,M}(u^M) \, .    
    \end{equation}
    On the one hand, by definition~\eqref{def:expected cost}, we have that 
    \begin{equation*}
        \J_{N,M}(u^M) =  \frac{1}{2}  \int_0^T    |u^M(t)|^2 \d t + \E \Big(\int_0^T \frac{1}{N} \frac{1}{M} \sum_{n=1}^N \sum_{m=1}^M \Hd(X^M_n(t) - Y^M_m(t) )  \, \d t  \Big) \, ,
    \end{equation*}
    where the stochastic processes $(X^M(t))_{t \in [0,T]} = (X^M_1(t),\dots,X^M_N(t))_{t \in [0,T]}$, $(Y^M(t))_{t \in [0,T]} = (Y^M_1(t), \dots, Y^M_M(t))_{t \in [0,T]}$ (and the curve $Z^M = (Z^M_1,\dots,Z^M_L)$) are the unique strong solution to~\eqref{eq:ODE random M}. On the other hand, we have that 
    \[
    \J_N(u) :=   \frac{1}{2}   \int_0^T    |u(t)|^2 \d t + \frac{1}{N}   \sum_{n=1}^N  \int_0^T  \int_{\R^2} \Hd(\bar X_n(t) - y) \, \d \mubp(t)(y) \, \d t  \, , 
    \]
    where the curve $\bar X = (\bar X_1, \dots, \bar X_N)$, the stochastic process $(\bar Y(t))_{t \in [0,T]}$ with law $\mubp$ (and the curve $Z = (Z_1,\dots,Z_L)$) are the unique strong solution  to~\eqref{eq:ODE Y average}.

    By the weak sequential lower semicontinuity of the $L^2$-norm, we have that 
    \[
    \int_0^T |u(t)|^2 \, \d t \leq     \liminf_{M \to +\infty} \int_0^T |u^M(t)|^2 \, \d t \, .
    \]
    Let us prove the convergence 
    \begin{equation} \label{eq:2211231748}
        \E \Big(\int_0^T \frac{1}{N} \frac{1}{M} \sum_{n=1}^N \sum_{m=1}^M \Hd(X^M_n(t) - Y^M_m(t) )  \, \d t  \Big) \to \frac{1}{N}   \sum_{n=1}^N  \int_0^T  \int_{\R^2} \Hd(\bar X_n(t) - y) \, \d \mubp(t)(y) \, \d t\,,
    \end{equation}
    as $M \to +\infty$. This will conclude the proof of~\eqref{eq:liminf inequality M}.  

    We exploit the equality 
    \[
        \frac{1}{M} \sum_{m=1}^M \Hd(X^M_n(t) - Y^M_m(t)) =  \Hd * \nup_M(t)(X^M_n(t))  
    \]
    to deduce that 
    \begin{equation} \label{eq:2212041255}
        \begin{split}
            & \Big| \E \Big(\int_0^T \! \frac{1}{N}\frac{1}{M} \sum_{n=1}^N\sum_{m=1}^M \Hd(X^M_n(t) - Y^M_m(t))  \, \d t  \Big) -   \frac{1}{N} \sum_{n=1}^N \int_0^T \! \int_{\R^2} \Hd(\bar X_n(t) - y) \, \d \mubp(t)(y) \, \d t  \Big| \\
            & \quad  \leq  \frac{1}{N}\sum_{n=1}^N \int_0^T \Big|    \E \Big( \Hd * \nup_M(t)(X^M_n(t))   \Big)   -   \Hd*\mubp(t)(\bar X_n(t))    \Big|\, \d t \\
            & \quad  \leq  \frac{1}{N}\sum_{n=1}^N \int_0^T     \E \Big( \big| \Hd * \nup_M(t)(X^M_n(t))    -   \Hd*\nup_M(t)(\bar X_n(t))  \big| \Big)  \, \d t \\
            & \quad \quad +  \frac{1}{N}\sum_{n=1}^N \int_0^T     \E \Big( \big| \Hd * \nup_M(t)(\bar X_n(t))    -   \Hd*\mubp(t)(\bar X_n(t))  \big| \Big)  \, \d t \, .
        \end{split}
    \end{equation}
    We estimate the first term on the right-hand side of~\eqref{eq:2212041255} by using the fact that, by the Lipschitz continuity of $\Hd$, a.s.\ for every $t \in [0,T]$
    \[  
        \begin{split}
            & \big| \Hd * \nup_M(t)(X^M_n(t))    -   \Hd*\nup_M(t)(\bar X_n(t))  \big| \\
             & \quad \leq   \int_{\R^2} \big| \Hd(X^M_n(t) - y) - \Hd(\bar X_n(t) - y) \big| \, \d \nup_M(t)(y)\\
            & \quad \leq C |X^M_n(t) - \bar X_n(t)| \leq C \max_{n'} \| X^M_{n'} - \bar X_{n'} \|_\infty \, .
        \end{split}
    \]  
    We estimate the second term on the right-hand side of~\eqref{eq:2212041255} by Kantorovich's duality, which by the Lipschitz continuity of $\Hd(\bar X_n(t) - \cdot)$ yields a.s.\ for every $t \in [0,T]$
    \[
        \begin{split}
            \big| \Hd * \nup_M(t)(\bar X_n(t))    -   \Hd*\mubp(t)(\bar X_n(t))  \big| & = \Big| \int_{\R^2} \Hd(\bar X_n(t)-y) \, \d \Big( \nup_M(t) - \mubp(t)\Big)(y) \Big|  \\
            &  \leq C \W_1(\nup_M(t),\mubp(t)) \, .
        \end{split}
    \]
        Putting together the previous inequalities, we conclude that
        \[  
            \begin{split}
                & \Big| \E \Big(\int_0^T  \frac{1}{N}\frac{1}{M} \sum_{n=1}^N\sum_{m=1}^M \Hd(X^M_n(t) - Y^M_m(t))  \, \d t  \Big) -   \frac{1}{N} \sum_{n=1}^N \int_0^T  \int_{\R^2} \Hd(\bar X_n(t) - y) \, \d \mubp(t)(y) \, \d t  \Big| \\
                & \quad \leq C \E\Big( \max_n \| X^M_n - \bar X_n \|_\infty \Big) + C \int_0^T \E \Big( \W_1(\nup_M(t),\mubp(t))\Big) \, \d t \, ,
            \end{split}
        \]  
        whence~\eqref{eq:2211231748} by Theorem~\ref{thm:limit to averaged system}.

    \step{2} (Asymptotic upper bound). Let us fix $u \in L^\infty([0,T];\U)$. For every $M \geq 1$, let us set $u^M = u$. As in {\itshape Step 1}, we have that 
    \begin{equation*}
        \J_{N,M}(u^M) =  \frac{1}{2}  \int_0^T    |u(t)|^2 \d t + \E \Big(\int_0^T \frac{1}{N} \frac{1}{M} \sum_{n=1}^N \sum_{m=1}^M \Hd(X^M_n(t) - Y^M_m(t))  \, \d t  \Big) \, ,
    \end{equation*}
    where the stochastic processes $(X^M(t))_{t \in [0,T]}$, $(Y^M(t))_{t \in [0,T]}$ (and the curve $Z^M$) are the unique strong solution to~\eqref{eq:ODE random M} corresponding to the control $u^M = u$ and 
    \[
    \J_N(u) :=   \frac{1}{2}   \int_0^T    |u(t)|^2 \d t + \frac{1}{N}   \sum_{n=1}^N  \int_0^T  \int_{\R^2} \Hd(\bar X_n(t) - y) \, \d \mubp(t)(y) \, \d t  \, , 
    \]
    where the curve $\bar X$, the stochastic process $(\bar Y(t))_{t \in [0,T]}$ with law $\mubp$ (and the curve $Z$) are the unique strong solution  to~\eqref{eq:ODE Y average}. Trivially, we have $u^M \wstar u$, hence we deduce \eqref{eq:2211231748} once again and, in particular, the asymptotic upper bound 
    \[
     \lim_{M \to +\infty} \J_{N,M}(u) = \J_N(u) \, .    
    \]
    This concludes the proof. 
\end{proof}

As a byproduct, we obtain the following result. 

\begin{proposition} \label{prop:existence of optimal control N}
    Under the assumptions of Proposition~\ref{prop:solution to averaged ODE/SDE}, there exists an optimal control $u^* \in L^\infty([0,T];\U)$, \ie,  
    \[
      \J_{N}(u^*) = \min_{u \in L^\infty([0,T];\U)} \J_{N}(u)\, .  
    \] 
\end{proposition}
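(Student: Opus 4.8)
The plan is to derive existence of $u^*$ from the general fact that a $\Gamma$-limit attains its minimum whenever the approximating functionals are equi-coercive; concretely, $u^*$ will be produced as a weak* cluster point of optimal controls of the pre-limit functionals $\J_{N,M}$, whose existence is guaranteed by Proposition~\ref{prop:existence of optimal control for ODE/SDE}. Equi-coercivity is automatic here, since all the $\J_{N,M}$ share the weak* compact domain $L^\infty([0,T];\U)$.

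First I would set up the family of pre-limit problems compatibly. Fix a sequence $(W_m(t))_{t\in[0,T]}$, $m\geq 1$, of independent $\R^2$-valued Brownian motions and, for each $M$, i.i.d.\ $\R^2$-valued random variables $Y^0_1,\dots,Y^0_M$ with common law $\Law(\bar Y^0)$ and independent of the $W_m$'s. Since $\E(|\bar Y^0|)<+\infty$ (an assumption of Proposition~\ref{prop:solution to averaged ODE/SDE}), we have $\E(|Y^0_m|)<+\infty$, so the hypotheses of Proposition~\ref{prop:solution to ODE/SDE}, and hence of Proposition~\ref{prop:existence of optimal control for ODE/SDE}, are satisfied for every $M$; thus there is $u^*_M\in L^\infty([0,T];\U)$ with $\J_{N,M}(u^*_M)=\min_{u}\J_{N,M}(u)$. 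Because $\U\subset\R^{2\x L}$ is compact, the set $L^\infty([0,T];\U)$ is weak* sequentially compact (and the weak* topology on it is metrizable), so, up to a subsequence not relabeled, $u^*_M\wstar u^*$ for some $u^*\in L^\infty([0,T];\U)$.

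Next I would show that $u^*$ is optimal for $\J_N$. By Theorem~\ref{thm:Gamma convergence for M}, $\J_{N,M}$ $\Gamma$-converges to $\J_N$ in the weak* topology; in particular the asymptotic lower bound yields $\J_N(u^*)\leq\liminf_{M\to+\infty}\J_{N,M}(u^*_M)$. Given any $v\in L^\infty([0,T];\U)$, the constant sequence $v^M=v$ is an admissible recovery sequence (as already shown in the proof of Theorem~\ref{thm:Gamma convergence for M}), so $\lim_{M\to+\infty}\J_{N,M}(v)=\J_N(v)$; combining this with the minimality $\J_{N,M}(u^*_M)\leq\J_{N,M}(v)$ yields
\[
\J_N(u^*)\leq\liminf_{M\to+\infty}\J_{N,M}(u^*_M)\leq\limsup_{M\to+\infty}\J_{N,M}(v)=\J_N(v)\,.
\]
Since $v$ is arbitrary, $\J_N(u^*)=\min_{u\in L^\infty([0,T];\U)}\J_N(u)$. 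The construction is independent of the particular $\bar Y^0$ chosen within its law class, cf.\ Proposition~\ref{prop:identically distributed}, so this is well posed.

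I do not expect a genuine obstacle: everything reduces to the standard ``convergence of minimizers'' principle of $\Gamma$-convergence, cf.~\cite[Chapter~7]{DM}. The only points deserving a line of care are verifying that the data of the pre-limit problems meet the hypotheses of Proposition~\ref{prop:existence of optimal control for ODE/SDE} and invoking the weak* sequential compactness of $L^\infty([0,T];\U)$ (which is where compactness of $\U$ enters). Alternatively one could run the direct method directly on $\J_N$, but that would require re-proving continuity of the solution map of the averaged system~\eqref{eq:ODE Y average} with respect to the control, which the $\Gamma$-convergence route bypasses.
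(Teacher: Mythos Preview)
Your proposal is correct and follows essentially the same approach as the paper: set up the pre-limit problems with i.i.d.\ initial data sharing the law of $\bar Y^0$, take optimal controls $u^*_M$ for $\J_{N,M}$ via Proposition~\ref{prop:existence of optimal control for ODE/SDE}, extract a weak* cluster point $u^*$, and conclude by the standard $\Gamma$-convergence argument using Theorem~\ref{thm:Gamma convergence for M} (with the constant sequence as recovery sequence). The paper's proof is slightly more compressed in the final chain of inequalities, but the strategy is identical.
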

\begin{proof}
    The proof is standard in the theory of $\Gamma$-convergence. Let us consider a sequence of independent Brownian motions $(W_m(t))_{t \in [0,T]}$, $m \geq 1$ and $Y^0_1, \dots, Y^0_M$ i.i.d.\ random variable with the same law of $\bar Y^0$. Let $(u^M)_{M \geq 1}$ be a sequence such that $\J_{N,M}(u^M) = \inf \J_{N,M}$. Since $(u^M)_{M \geq 1}$ is bounded in $L^\infty([0,T];\U)$, there exists $u^*$ and a subsequence (not relabeled) such that $u^M \wstar u^*$ weakly-* in $L^\infty([0,T];\U)$. By Theorem~\ref{thm:Gamma convergence for M} we have that 
    \[
        \begin{split}
            \J_N(u^*) & \leq \liminf_{M \to +\infty} \J_{N,M}(u^M) =  \liminf_{M \to +\infty} \inf \J_{N,M} \leq  \limsup_{M \to +\infty} \inf \J_{N,M} \\
            &  \leq \limsup_{M \to +\infty} \J_{N,M} (u^*)  = \J_N(u^*)\, .
        \end{split}
    \]
    (Here we used that the recovery sequence for $u^*$ is the constant sequence given by $u^*$, see the proof of Theorem~\ref{thm:Gamma convergence for M}.)
\end{proof}



\section{Mean-field limit for a large number of commercial ships} \label{sec:N to infty}

In this section we study the limit of the problem as $N \to +\infty$. For this reason we will stress the dependence of initial data and solutions on $N$.

\subsection{Mean-field limit as $N \to +\infty$}

In this section, we will use the explicit formula for the velocity correction
\[
        v^N_n(X)  = v\Big(\frac{1}{N-1} \sum_{n' = 1}^N \eta\big(X_n, X_n - X_{n'}\big)\Big)  = v\Big(\frac{1}{N} \sum_{n' = 1}^N \eta_N\big(X_n, X_n - X_{n'} \big)\Big) \, ,
\]
where we set 
\begin{equation}\label{def:etaN}
    \eta_N = \frac{N}{N-1} \eta \, .
\end{equation} 

In what follows, we shall use the symbol $*_2$ to indicate that the convolution is done with respect to the second variable, \ie, $\eta *_2   \nu(x) = \int_{\R^2} \eta(x,x-x') \, \d \nu(x')$.

\begin{theorem} \label{thm:N to infty}
Assume the following:
    \begin{itemize}
        \item Let $(W(t))_{t \in [0,T]}$ be a $\R^2$-valued Brownian motion;
        \item Let $X^{N,0} = (X_1^0,\dots,X_N^0) \in \R^{2\x N}$ and assume that $\max_n \|X^{N,0}_n\|_\infty \leq R_0$ with $R_0$ independent from $N$;
        \item Let $\bar Y^0$ be a $\R^2$-valued random variable with $\E(|\bar Y^0|) < + \infty$;
        \item Let $Z^0 = (Z_1^0,\dots,Z_L^0) \in \R^{2 \x L}$;
        \item Let $\muc_0 \in \Pcal_1(\R^2)$ with $\supp(\muc_0) \subset \bar B_{R_0}$ be such that $\W_1(\frac{1}{N} \sum_{n=1}^N \delta_{X_n^0}, \muc_0) \to 0$ as $N \to +\infty$;
    \end{itemize}
    Let $u^N, u \in L^\infty([0,T];\U)$ be such that $u^N \wstar u$ weakly* in $L^\infty([0,T];\U)$.\footnote{In fact, by the boundedness of $\U$, this is equivalent to requiring that $u^N \weak u$ weakly in $L^1([0,T];\U)$.}  Let $\bar X^N = (\bar X^N_1,\dots, \bar X^N_N)$, $(\bar Y^N(t))_{t \in [0,T]}$, and $Z^N = (Z^N_1,\dots, Z^N_L)$ be the unique strong solution to\footnote{This corresponds to the averaged ODE/SDE/ODE system~\eqref{eq:ODE Y average} with initial data $X^{N,0}$, $\bar Y^0$, $Z^0$, with Brownian motion $W$ and control $u^N$. The solution is provided by Proposition~\ref{prop:solution to averaged ODE/SDE}.}
    \begin{equation}  \label{eq:ODE/SDE N}
    \left\{
        \begin{aligned}
            & \frac{\d \bar X^N_{n}}{\d t}(t) = v^N_n(\bar X^N(t))\Big(  \r(\bar X^N_{n}(t)) + \Kcp*\mubp_N(t)(\bar X^N_{n}(t))    \Big)  \, ,      \\
            & \d \bar Y^N(t) = \Big(\frac{1}{L} \sum_{\ell=1}^L \Kpg( \bar Y^N(t) - Z^N_\ell(t))  - \frac{1}{N} \sum_{n=1}^N \Kpc(\bar Y^N(t) - \bar X^N_{n}(t)) \Big) \d t  + \sqrt{2\kappa} \, \d W(t) \, ,   \\
            & \frac{\d Z^N_\ell}{\d t}(t)  =  \frac{1}{L}\sum_{\ell'=1}^L \Kgg(Z^N_\ell(t) - Z^N_{\ell'}(t))  + u^N_\ell(t)   \, ,  \\ 
            & \bar X^N_{n}(0) =  X_n^0 \, , \  \ Z^N_\ell(0) =  Z_\ell^0 \, , \quad n = 1,\dots,N \, , \ \ell = 1,\dots, L \, , \\
            & \bar Y^N(0)  = \bar Y^0 \ \text{a.s.,} \quad \mubp_N = \Law(\bar Y^N) \, .
        \end{aligned}
        \right.    
    \end{equation}
    Let us consider the measures 
    \begin{equation} \label{eq:nucN}
        \nuc_N(t) := \frac{1}{N} \sum_{n=1}^N \delta_{\bar X^N_n(t)} \, .
    \end{equation}
    Then there exists $\muc \in C^0([0,T];\Pcal_1(\R^2))$, $(\bar Y(t))_{t \in [0,T]}$, and $Z=(Z_1,\dots,Z_L)$ such that  
    \[
      \sup_{t \in [0,T]}\W_1( \nuc_N(t),\muc(t)) + \E\Big( \|\bar Y^N - \bar Y\|_\infty \Big) + \|Z^N - Z\|_\infty  \to 0 \quad \text{as } N \to +\infty \, . 
    \]
    Moreover,  $\muc \in C^0([0,T];\Pcal_1(\R^2))$, $(\bar Y(t))_{t \in [0,T]}$, and $Z=(Z_1,\dots,Z_L)$ provide the unique solution to 
    \begin{equation} \label{eq:limit PDE/SDE/ODE}
        \left\{
        \begin{aligned}
            & \de_t \muc + \div_x \Big( v\big( \eta *_2 \muc \big) \big(  \r + \Kcp*\mup   \big) \muc \Big) = 0 \, ,\\
            & \d \bar Y(t) = \Big(\frac{1}{L} \sum_{\ell=1}^L \Kpg( \bar Y(t) - Z_\ell(t))  -   \Kpc*\muc(t)(\bar Y(t)) \Big) \d t  + \sqrt{2\kappa} \, \d W(t) \, ,   \\
            & \frac{\d Z_\ell}{\d t}(t)  =   \frac{1}{L}\sum_{\ell'=1}^L \Kgg(Z_\ell(t) - Z_{\ell'}(t))  + u_\ell(t)   \, ,  \\ 
            & \muc(0) = \muc_0 \, , \\
            & \bar Y(0)  = \bar Y^0 \ \text{a.s.,} \quad \mup = \Law(\bar Y) \, ,\\
            & Z_\ell(0) =  Z_\ell^0 \, , \ \ell = 1,\dots, L \, .
        \end{aligned}
        \right.    
    \end{equation}
\end{theorem}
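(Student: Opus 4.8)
The plan is to mirror the two-step strategy already used for the limit $M\to+\infty$, replacing propagation of chaos with a deterministic mean-field argument for the commercial ships. The crucial observation is that, for each fixed $N$, the quantity $\nuc_N(t)=\frac1N\sum_n\delta_{\bar X^N_n(t)}$ feeds into the equations only through the kernels $\Kcp*\nuc_N$, $\Kpc*\nuc_N$ and, most delicately, through the velocity correction, which by the explicit formula equals $v\bigl(\eta*_2\nuc_N(\bar X^N_n(t))\bigr)$ with $\eta_N$ replaced by $\eta$ up to the $\tfrac{N}{N-1}$ factor of~\eqref{def:etaN}. So the system~\eqref{eq:ODE/SDE N} can be rewritten as a closed system for $(\nuc_N,\bar Y^N,Z^N)$, and the target~\eqref{eq:limit PDE/SDE/ODE} is its formal mean-field counterpart.

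\textbf{Step 1 (a priori bounds and compactness).} First I would record, via Remark~\ref{rmk:bar X is bounded} and the hypothesis $\supp(\muc_0)\subset\bar B_{R_0}$, that all $\bar X^N_n(t)$ stay in a fixed ball $\bar B_R$ with $R=R(R_0,T,\|v\|_\infty,\r,\Kcp)$ independent of $N$, so each $\nuc_N(t)\in\Pcal_1(\bar B_R)$. The curves $t\mapsto\bar X^N_n(t)$ are equi-Lipschitz (uniform bound on the right-hand side), hence $t\mapsto\nuc_N(t)$ is equi-Lipschitz from $[0,T]$ to $(\Pcal_1(\bar B_R),\W_1)$, which is compact; by Arzel\`a--Ascoli there is a subsequence with $\sup_t\W_1(\nuc_N(t),\muc(t))\to0$ for some $\muc\in C^0([0,T];\Pcal_1(\R^2))$. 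Likewise $\|Z^N-Z\|_\infty\to0$ by the same Gr\"onwall argument as in \emph{Substep~4.1} of the proof of Theorem~\ref{thm:limit to averaged system}, using only $u^N\wstar u$ and the boundedness of $\U$.

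\textbf{Step 2 (passing to the limit and identifying the PDE).} For a test function $\psi\in C^1_c(\R^2)$ I would write the weak formulation $\frac{\d}{\d t}\int\psi\,\d\nuc_N(t)=\frac1N\sum_n\nabla\psi(\bar X^N_n(t))\cdot v^N_n(\bar X^N(t))\bigl(\r(\bar X^N_n(t))+\Kcp*\mubp_N(t)(\bar X^N_n(t))\bigr)$ and pass to the limit term by term: the velocity $v\bigl(\eta_N*_2\nuc_N(x)\bigr)\to v\bigl(\eta*_2\muc(x)\bigr)$ uniformly on $\bar B_R$ because $\eta$ is bounded and Lipschitz, $\eta_N\to\eta$ uniformly, $v$ is Lipschitz, and $\W_1(\nuc_N(t),\muc(t))\to0$ uniformly; similarly $\Kcp*\mubp_N\to\Kcp*\mup$ once $\mubp_N\to\mup$ is known (see Step 3). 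This yields that $\muc$ solves the continuity equation $\de_t\muc+\div_x\bigl(v(\eta*_2\muc)(\r+\Kcp*\mup)\muc\bigr)=0$ in the distributional sense, with $\muc(0)=\muc_0$. Simultaneously, $(\bar Y^N(t))$ solves an SDE whose drift $b_N(t,y)=\frac1L\sum_\ell\Kpg(y-Z^N_\ell(t))-\Kpc*\nuc_N(t)(y)$ converges, uniformly on bounded sets, to $b(t,y)=\frac1L\sum_\ell\Kpg(y-Z_\ell(t))-\Kpc*\muc(t)(y)$; a Gr\"onwall estimate on $\E\|\bar Y^N-\bar Y\|_\infty$ exactly as in \emph{Step~1} of the proof of Theorem~\ref{thm:limit to averaged system} (but now using $\W_1(\nuc_N,\muc)$ instead of $\W_1(\nup_M,\mubp)$) gives $\E\|\bar Y^N-\bar Y\|_\infty\to0$, hence $\mubp_N=\Law(\bar Y^N)\to\Law(\bar Y)=\mup$ in $\W_1$. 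This closes the loop between the two convergences.

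\textbf{Step 3 (coupling the two estimates and uniqueness).} The only genuinely coupled part is that the convergence of $\nuc_N$ uses $\Kcp*\mubp_N$, while the convergence of $\bar Y^N$ uses $\Kpc*\nuc_N$; I would handle this by running a single Gr\"onwall inequality for the sum $\sup_{s\le t}\W_1(\nuc_N(s),\muc(s))+\E\bigl(\sup_{s\le t}|\bar Y^N(s)-\bar Y(s)|\bigr)+\|Z^N-Z\|_\infty$, bounding the right-hand side by a constant times the time-integral of itself plus an error term $\alpha(N)\to0$ coming from $\eta_N-\eta$, $\W_1(\tfrac1N\sum\delta_{X^0_n},\muc_0)$, and the control term $\|R_N\|_\infty$ with $R_N(t)=|\int_0^t(u^N-u)|$. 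The \textbf{main obstacle} is precisely controlling the velocity-correction term: unlike the interaction kernels $\Kcp,\Kpc$, which act linearly on the measure, $v(\eta_N*_2\nuc_N)$ is a nonlinear function of a nonlocal average evaluated at a moving point, so one must show $\bigl|v(\eta_N*_2\nuc_N(\bar X^N_n))-v(\eta*_2\muc(\bar X_n))\bigr|\le C\bigl(\max_{n'}|\bar X^N_{n'}-\bar X_{n'}|+\W_1(\nuc_N,\muc)+\|\eta_N-\eta\|_\infty\bigr)$ using the Lipschitz continuity of $v$, of $x\mapsto\eta(x,x-\cdot)$, and Kantorovich duality — this is routine once set up but is the place where the argument for the commercial ships genuinely differs from the pirate-ship case. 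Finally, since~\eqref{eq:limit PDE/SDE/ODE} has a unique solution (its continuity equation has a Lipschitz, bounded velocity field on the invariant ball $\bar B_R$, hence a unique measure solution given by the push-forward along the flow, and the SDE and ODE are uniquely solvable by Proposition~\ref{prop:SDE simple}; the fixed-point structure in $\mup$ is a contraction as in Proposition~\ref{prop:solution to averaged ODE/SDE}), the whole sequence converges, not just a subsequence.
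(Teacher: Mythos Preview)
Your proposal is essentially correct and follows the same architecture as the paper: uniform-in-$N$ bounds on $\bar X^N$ to confine the supports, Arzel\`a--Ascoli in $C^0([0,T];\Pcal_1(\bar B_R))$ for subsequential convergence of $\nuc_N$, direct convergence of $Z^N$ via the usual argument with $R_N(t)=|\int_0^t(u^N-u)|$, a Gr\"onwall estimate for $\E\|\bar Y^N-\bar Y\|_\infty$, then passing to the limit in the weak formulation of the continuity equation, and finally uniqueness to upgrade to full-sequence convergence.

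The one organizational point worth noting is your Step~3. In the paper, the apparent coupling you identify is \emph{dissolved} rather than confronted: because the convergence of $\nuc_N$ comes from compactness (not from the equation), it does not require knowing anything about $\mubp_N$; then $\bar Y^N\to\bar Y$ follows by a one-sided Gr\"onwall that uses only the already-established $\sup_t\W_1(\nuc_N(t),\muc(t))\to0$ and $\|Z^N-Z\|_\infty\to0$; only afterwards, when passing to the limit in the PDE for $\muc$, does one need $\mubp_N\to\mup$ --- and by then it is available. So your ``single coupled Gr\"onwall'' is not needed once compactness is in hand; it would be the natural route only if one first \emph{constructed} the limit $(\muc,\bar Y,Z)$ by an independent fixed-point argument and then proved stability. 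Both routes work; the paper's is slightly leaner because it avoids building the limit system twice.

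One place where your sketch is thinner than the paper is uniqueness. The velocity field $x\mapsto v(\eta*_2\muc(t)(x))(\r(x)+\Kcp*\mup(t)(x))$ is indeed Lipschitz on $\bar B_R$ for each \emph{fixed} pair $(\muc,\mup)$, but since it depends nonlinearly on $\muc$ itself, uniqueness for the continuity equation is not a direct consequence of standard linear transport theory. The paper handles this (in the separate Theorem~\ref{thm:uniqueness of PDE/SDE/ODE}) via the Lagrangian flow representation $\muc_i(t)=\Phi_i(t,\cdot)_\#\muc_0$ and a pair of coupled estimates~\eqref{eq:aimed Gronwall barY}--\eqref{eq:aimed Gronwall W1} for $\E\|\bar Y_1-\bar Y_2\|_\infty$ and $\W_1(\muc_1(t),\muc_2(t))$, closed by Gr\"onwall. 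Your ``contraction as in Proposition~\ref{prop:solution to averaged ODE/SDE}'' gestures at this but does not quite capture it: the contraction there was in $\Pcal_1(C^0)$ for the law of $\bar Y$ with $\bar X$ finite-dimensional, whereas here the state for the commercial ships is itself a measure and the stability has to be proved at that level.
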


\begin{proof}
\step{1} (PDE solved by the empirical measures)
In terms of $\nuc_N(t)$, $v^N_n(\bar X^N(t))$ reads
\begin{equation} \label{eq:alternative v}
    \begin{split}
        v^N_n(\bar X^N(t)) & = v\Big(\frac{1}{N} \sum_{n' = 1}^N \eta_N\big(\bar X^N_n(t),\bar X^N_n(t) - \bar X^N_{n'}(t)  \big)\Big) \\
        & = v\Big(\int_{\R^2} \eta_N(\bar X^N_n(t), \bar X^N_n(t) - x' ) \, \d \nuc_N(t)(x')\Big) = v\big( \eta_N *_2 \nuc_N(t) (\bar X^N_n(t)) \big) \, .
    \end{split}
\end{equation}

Let us derive the PDE solved by $\nuc_N(t)$ in the sense of distributions.\footnote{We use here the duality introduced in Footnote~\ref{footnote:distributional solution}.} Let us fix $\xi \in C^\infty_c((-\infty,T) \x \R^2)$. By~\eqref{eq:ODE/SDE N} and~\eqref{eq:alternative v} we have that 
\begin{equation} \label{eq:nucN distributional solution}
    \begin{split}
        0 & = \frac{\d}{\d t} \Big( \int_{-\infty}^0 \int_{\R^2} \xi(t,x) \, \d \nuc_N(0)(x) \, \d t + \int_0^T \int_{\R^2} \xi(t,x) \, \d \nuc_N(t)(x) \, \d t \Big) \\
        &   = \frac{1}{N} \sum_{n=1}^N \frac{\d}{\d t} \Big(  \int_{-\infty}^0  \xi(t, X^0_n)   \, \d t +  \int_0^T  \xi(t,\bar X^N_n(t))   \, \d t \Big) \\
        &  = \frac{1}{N} \sum_{n=1}^N  \Big(  \int_{-\infty}^0  \de_t \xi(t, X^0_n)   \, \d t +  \int_0^T  \Big( \de_t \xi(t,\bar X^N_n(t)) + \frac{\d \bar X^N_n}{\d t}(t) \cdot \nabla_x \xi(t,\bar X^N_n(t))  \, \d t \Big) \\
        & = \frac{1}{N} \sum_{n=1}^N \Big( \xi(0,X^0_n) + \int_0^T  \Big( \de_t \xi(t,\bar X^N_n(t))\  + \\
        & \hspace{1cm} + v\big( \eta_N *_2 \nuc_N(t) (\bar X^N_n(t)) \big)  \big(  \r(\bar X^N_{n}(t)) + \Kcp*\mubp_N(t)(\bar X^N_{n}(t))    \big) \cdot \nabla_x \xi(t,\bar X^N_n(t))  \Big) \, \d t \Big) \\
        & = \int_{\R^2} \xi(0,x) \, \d \Big( \frac{1}{N} \sum_{n=1}^N \delta_{X^0_n} \Big)(x) + \int_0^T \!\! \int_{\R^2} \! \Big( \de_t \xi(t, x) \ +  \\
        & \hspace{1cm} +  v\big( \eta_N *_2 \nuc_N(t) (x) \big) \big(  \r(x) + \Kcp*\mubp_N(t)(x)    \big) \cdot \nabla_x \xi(t,x)  \Big) \, \d \nuc_N(t)(x) \, \d t \, .
    \end{split}
\end{equation}
This means that $\nuc_N$ is a distributional solution to 
\begin{equation} \label{eq:PDE nuN}
    \left\{
        \begin{aligned}
            & \de_t \nuc_N + \div_x \Big( v\big( \eta_N *_2 \nuc_N \big) \big(  \r + \Kcp*\mubp_N   \big) \nuc_N \Big) = 0 \, ,\\
            & \nuc_N(0) = \frac{1}{N} \sum_{n=1}^N \delta_{X^0_n} \, .
        \end{aligned}
        \right.
\end{equation}

\step{2} (Convergence of empirical measures $\nuc_N$) To show compactness of the sequence of curves $\nuc_N \in C^0([0,T];\Pcal_1(\R^2))$ we rely on a Arzel\`a-Ascoli Theorem for metric-valued functions. We split the proof in substeps.

\substep{2}{1} (Equiboundedness of supports)   By Remark~\ref{rmk:bar X is bounded}, we have that $\max_n \|\bar X^N_n \|_\infty \leq R$, where the constant $R$ depending on the initial datum $X^0$, the final time $T$, $\|v^N\|_\infty$, $\r$, and $\Kcp$. This implies that $\supp\big(\nuc_N(t)\big)$ are contained in the closed ball $\bar B_R$ for every $t \in [0,T]$.  

\substep{2}{2} (Equicontinuity) Let us prove that $\nuc_N \in C^0([0,T];\Pcal_1(\R^2))$ are equicontinuous.

We observe that he sequence $\|Z^N\|_\infty$ is bounded. Indeed, by~\eqref{eq:ODE/SDE N}, 
\[
    \begin{split}
        |Z^N_\ell(t)| & \leq  |Z^0_\ell| + \int_0^t \Big( \Big| \frac{1}{L}\sum_{\ell'=1}^L \Kgg( Z^N_\ell(s) - Z^N_{\ell'}(s)) \Big|  + |u^N_\ell(s)|  \Big)   \, \d s \\
        & \leq |Z^0| + \int_0^t C (1 + |Z^N(s)|) \, \d s  \leq |Z^0| + CT + \int_0^t C |Z^N(s)| \, \d s \, ,
    \end{split}
\]
the constant $C$ depending on $\Kgg$ and the set of admissible controls $\U$ (bounded). Taking the norm of $Z^N$ and by Gr\"onwall's inequality, we obtain that 
\[
|Z^N(t)| \leq (|Z^0| + CT) e^{Ct} \leq R'\, ,    
\]
where the constant $R'$ depends on $\Kgg$, $\U$, and $T$. 

By Remark~\ref{rmk:bar X is bounded}, for every $r \in [0,T]$ we have that 
\begin{equation} \label{eq:bound on EYN}
    \int_{\R^2} |y| \, \d \mubp_N(r)(y) = \E(|\bar Y^N(r)|) \leq C(1+\E(|\bar Y^0|)) \, ,
\end{equation}
where the constant $C$ depends on $\Kpg$, $\Kpc$, $\|Z^N\|_{\infty}$ (bounded by $R'$), $\max_n \|\bar X^N_n \|_{\infty}$ (bounded by $R$), $T$, and $W$. Then the Lipschitz continuity of $\Kcp$ and~\eqref{eq:bound on EYN} yield 
\begin{equation} \label{eq:bound on Kcp}
    \begin{split}
        |\Kcp*\mubp_N(r)(x)| & \leq  \int_{\R^2} | \Kcp(x-y) | \, \d \mubp_N(r)(y) \leq \int_{\R^2} ( |\Kcp(0)| + C | x | + C | y |  ) \, \d \mubp_N(r)(y) \\ 
        & \leq C(1+\E(|\bar Y^0|)+|x|) \leq C(1+|x|) \, ,
    \end{split}
\end{equation}
where the constant $C$ additionally depends on $\E(|\bar Y^0|)$. 

By~\eqref{eq:ODE/SDE N} and~\eqref{eq:bound on Kcp}, for $s \leq t$ and $n=1,\dots,N$ we have that 
\[
    \begin{split}
        |\bar X^N_n(s) - \bar X^N_n(t)| & \leq \int_s^t |v^N_n(\bar X^N(r))\big(  \r(\bar X^N_{n}(r)) + \Kcp*\mubp_N(r)(\bar X^N_{n}(r))    \big)| \, \d r \\
        & \leq \int_s^t C ( 1 + |\bar X^N_{n}(r)|) \, \d r \leq C |t - s| \, ,
    \end{split}
\]
where the constant $C$ depends on the constant obtained in~\eqref{eq:bound on Kcp} and additionally on $\|v^N\|_\infty$ and $\r$. Using as transport plan between $\nuc_N(s)$ and $\nuc_N(t)$ the measure $\gamma = \frac{1}{N} \sum_{n=1}^N \delta_{(\bar X^N_n(s), \bar X^N_n(t))}$, we obtain that 
\[
\W_1(\nuc_N(s),\nuc_N(t)) \leq  \int_{\R^2 \x \R^2} |x - x'| \, \d \gamma(x,x') = \frac{1}{N} \sum_{n=1}^N |\bar X^N_n(s) - \bar X^N_n(t)| \leq C|t-s| \, .
\]
\ie, the curves $\nuc_N \in C^0([0,T]; \Pcal_1(\bar B_R))$ are equi-Lipschitz with respect to the 1-Wasserstein distance.

\substep{2}{3} (Compactness) Since the ball $\bar B_R$ is compact, the Wasserstein space $\Pcal_1(\bar B_R)$ is compact too~\cite[Remark 6.19]{Vil}.\footnote{In fact, the curves $\nuc_{N} \in C^0([0,T];\Pcal_1(\R^2))$ take values in a compact set of $\Pcal_1(\R^2)$ independent of $N$ even under weaker assumptions. This is the case, \eg, when $q$-moments of $\nuc_{N}(t)$ with $q>1$ are uniformly bounded, \ie, $\sup_N \sup_t \int_{\R^2} |x|^q \, \d \nuc_N(t)(x) < +\infty$ for some $q>1$ (this can be proven basing on~\cite[Theorem~6.9]{Vil}. A uniform bound on the $q$-moments follows from the analogous assumption on the distribution of initial data by a Gr\"onwall inequality. } Hence the Arzel\`a-Ascoli Theorem for continuous functions with values in a metric space guarantees the existence of a curve $\muc \in C^0([0,T]; \Pcal_1(\bar B_R))$ and a subsequence $N_k$ such that
\begin{equation} \label{eq:W1 converges}
    \sup_{t \in [0,T]}\W_1(\nuc_{N_k}(t), \muc(t)) \to 0 \quad \text{as } N_k \to +\infty \, .
\end{equation}

Without loss of generality, we do not relabel this subsequence and denote it simply by $N$. This does not affect the proof, as in Theorem~\ref{thm:uniqueness of PDE/SDE/ODE} we shall prove uniqueness of solutions for the limit problem. 

\step{3} (Convergence of $Z^N$)
We let $Z = (Z_1,\dots, Z_L)$ be the unique solution to 
\[
    \left\{
        \begin{aligned}
            & \frac{\d Z_\ell}{\d t}(t)  =   \frac{1}{L}\sum_{\ell'=1}^L \Kgg( Z_\ell(t) - Z_{\ell'}(t))  + u_\ell(t)   \, ,  \\ 
            & Z_\ell(0) =  Z_\ell^0 \, .
        \end{aligned}
    \right.    
\]
As in {\itshape Substep 4.1} in the proof of Theorem~\ref{thm:limit to averaged system}, we get that 
\begin{equation} \label{eq:ZN converges}
    \|Z^N - Z \|_{\infty} \to 0  \quad \text{as } N \to +\infty\, .
\end{equation}
 
\step{4} (Convergence of $\bar Y^N$) Let us consider the SDE 
\begin{equation} \label{eq:SDE for bar Y}
    \left\{
        \begin{aligned}
            & \d \bar Y(t) = \Big(\frac{1}{L} \sum_{\ell=1}^L \Kpg( \bar Y(t) - Z_\ell(t))  -   \Kpc*\muc(t)(\bar Y(t)) \Big) \d t  + \sqrt{2\kappa} \, \d W(t) \, ,   \\
            & \bar Y(0)  = \bar Y^0 \ \text{a.s.} \\
        \end{aligned}
        \right.   
    \end{equation}
We will show that $\bar Y^N$ converges to $\bar Y$. 

\substep{4}{1} (Well-posedness of~\eqref{eq:SDE for bar Y}) There exists a unique strong solution to~\eqref{eq:SDE for bar Y}. Indeed, let us consider the drift 
\[
b(t,Y) :=  \frac{1}{L} \sum_{\ell=1}^L \Kpg( Y - Z_\ell(t))  -   \Kpc*\muc(t)(Y) 
\]
and the constant dispersion matrix $\sigma = \sqrt{2\kappa} \, \Id_2$, so that 
\[
    \left\{
        \begin{aligned}
            & \d \bar Y(t) = b(t,\bar Y(t)) \d t  + \sigma \, \d W(t) \, ,   \\
            & \bar Y(0)  = \bar Y^0 \ \text{a.s.} \\
        \end{aligned}
        \right.  
\]
Let us observe that $b$ is continuous in $t$ and Lipschitz-continuous in $Y$ (with Lipschitz constant independent of $t$). Indeed, $Z$ is a continuous curve, while by Kantorovich's duality 
\begin{equation} \label{eq:Kpcmuct Lipschitz}
    \begin{split}
        & | \Kpc*\muc(t)(Y) - \Kpc*\muc(s)(Y) |\\
        &   = \Big| \int_{\R^2} \Kpc(Y - x) \, \d \Big( \muc(t) - \muc(s)\Big)(x)  \Big| \leq C \W_1(\muc(t),\muc(s)) \, ,
    \end{split}
\end{equation}
and $t \mapsto \muc(t)$ is a continuous curve in the Wasserstein space $\Pcal_1(\R^2)$. Moreover, the function $Y \mapsto \Kpg( Y - Z_\ell(t))$ is Lipschitz-continuos, and so is $Y \mapsto \Kpc*\muc(t)(Y)$, since 
\[
    \begin{split}
        | \Kpc*\muc(t)(Y) -  \Kpc*\muc(t)(Y')| &  \leq  \int_{\R^2} |\Kpc(Y - x)  -  \Kpc(Y' - x) | \, \d \muc(t)(x) \\
        & \leq  \int_{\R^2} C |Y - Y' | \, \d \muc(t)(x) = C |Y - Y'| \, .
    \end{split}
\]
Moreover, we have that 
\[
|b(t,Y)| \leq |b(t,0)| + |b(t,0) - b(t,Y)| \leq |b(t,0)| + C |Y|
\]
and 
\[
    \begin{split}
        |b(t,0)| & \leq     \frac{1}{L} \sum_{\ell=1}^L |\Kpg(- Z_\ell(t))|  +   |\Kpc*\muc(t)(0)| \\
        & \leq C (1 + |Z(t)|) + \int_{\R^2} C(1+|x|) \, \d \muc(t)(x) \leq C (1 + \|Z\|_\infty) +  C(1+R)  \leq C \, , 
    \end{split}
\]
where the last inequality follows from the fact that $Z$ is bounded and $\muc(t)$ has support in the ball $\bar B_R(0)$ for every $t \in [0,T]$. We conclude that 
\begin{equation} \label{eq:bound on b}
    |b(t,Y)| \leq C(1+|Y|) \, ,
\end{equation}
where the constant $C$ depends on $\Kpg$, $\Kpc$, $\|Z\|_\infty$, $R$. Thus the assumptions of Proposition~\ref{prop:SDE simple} are satisfied. Proposition~\ref{prop:SDE simple} also gives us that
\begin{equation} \label{eq:bound on bar Y}
    \E(\|\bar Y\|_\infty) \leq C \, ,    
\end{equation}
where the constant $C$ depends on $\Kpg$, $\Kpc$, $\|Z\|_\infty$, $R$, $\bar Y^0$, $T$, and $W$.


\substep{4}{2} (Convergence of $\bar Y^N$ to $\bar Y$) Let us prove that 
\begin{equation} \label{eq:YN converges}
    \E\Big(\|\bar Y^N - \bar Y\|_\infty\Big)  \to 0 \quad \text{as } N \to +\infty \, .
\end{equation}
For, we start by noticing that 
\[
    \frac{1}{N} \sum_{n=1}^N \Kpc(\bar Y^N(t) - \bar X^N_{n}(t)) = \Kpc*\nuc_N(t)(\bar Y^N(t)) \, .
\]
Hence, by~\eqref{eq:ODE/SDE N}, \eqref{eq:SDE for bar Y}, \eqref{eq:Kpcmuct Lipschitz}, and by Kantorovich's duality, we have a.s.\ for $0 \leq s \leq t \leq T$
\[
    \begin{split}
     & |\bar Y^N(s) - \bar Y(s)| \leq  \frac{1}{L} \sum_{\ell=1}^L \int_0^s     | \Kpg( \bar Y^N(r) - Z^N_\ell(r))   -    \Kpg( \bar Y(r) - Z_\ell(r)) | \, \d r \\
     & \hspace{4cm} + \int_0^s    |\Kpc*\nuc_N(r)(\bar Y^N(r)) - \Kpc*\muc(r)(\bar Y(r)) |  \, \d r \\
     & \leq  \int_0^s C |\bar Y^N(r) - \bar Y(r)| \, \d r + CT \|Z^N - Z\|_\infty \\
     & \quad + \int_0^s    |\Kpc*\nuc_N(r)(\bar Y^N(r)) - \Kpc*\muc(r)(\bar Y^N(r)) |  \, \d r \\
     & \quad + \int_0^s    |\Kpc*\muc(r)(\bar Y^N(r)) - \Kpc*\muc(r)(\bar Y(r)) |  \, \d r \\
     & \leq \int_0^s C |\bar Y^N(r) - \bar Y(r)| \, \d r + CT \|Z^N - Z\|_\infty  + CT \sup_{r \in [0,T]} \W_1(\nuc_N(r), \muc(r))   \, .
    \end{split}
\] 
Taking the supremum and the expectation, we deduce that 
\[
    \begin{split}
        \E\Big(\sup_{0\leq s \leq t}|\bar Y^N(s) - \bar Y(s)| \Big) & \leq \int_0^t C \E\Big( \sup_{0\leq r \leq s} |\bar Y^N(r) - \bar Y(r)| \Big) \, \d s \\
        & \hspace{1cm} + CT \|Z^N - Z\|_\infty  + CT \sup_{r \in [0,T]} \W_1(\nuc_N(r), \muc(r))
    \end{split}
  \]
  and, by Gr\"onwall's inequality, 
  \[
    \E\Big(\sup_{0\leq s \leq t}|\bar Y^N(s) - \bar Y(s)| \Big) \leq CT \Big(\|Z^N - Z\|_\infty  + \sup_{r \in [0,T]} \W_1(\nuc_N(r), \muc(r)) \Big) e^{Ct} \, .
  \]
    In particular,
    \[
    \E\Big(\|\bar Y^N - \bar Y\|_\infty\Big)  \leq C  \Big(\|Z^N - Z\|_\infty  + \sup_{r \in [0,T]} \W_1(\nuc_N(r), \muc(r)) \Big) \, ,
    \]
    the constant $C$ depending also on $T$. By~\eqref{eq:ZN converges} and~\eqref{eq:W1 converges}, we obtain~\eqref{eq:YN converges}.

    \step{5} (Limit problem)
    With~\eqref{eq:W1 converges}, \eqref{eq:ZN converges}, and~\eqref{eq:YN converges}  at hand, we are in a position to pass to the limit as $N \to +\infty$ in~\eqref{eq:PDE nuN} and prove that $\muc$ is a distributional solution to 
    \begin{equation} \label{eq:PDE muc}
        \left\{
            \begin{aligned}
                & \de_t \muc + \div_x \Big( v\big( \eta *_2 \muc \big) \big(  \r + \Kcp*\mup   \big) \muc\Big) = 0 \, ,\\
                & \muc(0) = \muc_0 \, ,
            \end{aligned}
            \right.
    \end{equation}
    \ie, 
    \begin{equation} \label{eq:muc distributional solution}
        \begin{split}
            0 = & \int_{\R^2} \xi(0,x) \, \d  \muc_0(x)    \\
            & \hspace{0.2cm}+  \int_0^T \!\! \int_{\R^2} \! \Big( \de_t \xi(t, x) +  v\big( \eta *_2 \muc(t) (x) \big) \big(  \r(x) + \Kcp*\mup(t)(x)    \big) \cdot \nabla_x \xi(t,x)  \Big) \, \d \muc(t)(x) \, \d t \, .
        \end{split}
    \end{equation}
        We divide the proof in substeps.


    \substep{5}{1} (Convergence of initial datum term) By the Lipschitz continuity of $x \mapsto \xi(0,x)$ and by Kantorovich's duality, we have that 
    \[
    \Big| \int_{\R^2} \xi(0,x) \, \d \Big(\frac{1}{N} \sum_{n=1}^N \delta_{X^0_n} - \muc_0 \Big)(x)\Big| \leq C \W_1\Big(\frac{1}{N} \sum_{n=1}^N \delta_{X_n^0}, \muc_0\Big) \, .
    \] 
     By the assumption on the initial data, we have that $\W_1(\frac{1}{N} \sum_{n=1}^N \delta_{X_n^0}, \muc_0) \to 0$, hence
    \begin{equation} \label{eq:initial datum term}
        \int_{\R^2} \xi(0,x) \, \d \Big(\frac{1}{N} \sum_{n=1}^N \delta_{X^0_n}\Big)(x) \to \int_{\R^2} \xi(0,x) \, \d \muc_0 \, .
    \end{equation}

    \substep{5}{2} (Convergence of time-derivative term)
    Since $x \mapsto \de_t \xi(t,x)$ is Lipschitz-continuous with a Lipschitz constant independent of $t$, by Kantorovich's duality we have that 
    \[
    \Big| \int_{\R^2} \de_t \xi(t,x) \, \d \Big( \nuc_N(t) - \muc(t) \Big)(x)  \Big| \leq C \W_1(\nuc_N(t), \muc(t)) \, ,
    \]
    for every $t$. By~\eqref{eq:W1 converges} it follows that $\int_{\R^2} \de_t \xi(t,x) \, \d \nuc_N(t)(x) \to \int_{\R^2} \de_t \xi(t,x) \, \d  \muc(t)(x)$ as $N \to +\infty$ uniformly in $t$, thus 
    \begin{equation} \label{eq:time-derivative term}
        \int_0^T \int_{\R^2} \de_t \xi(t,x) \, \d \nuc_N(t)(x) \, \d t \to \int_0^T \int_{\R^2} \de_t \xi(t,x) \, \d  \muc(t)(x) \, \d t \, .
    \end{equation}

    \substep{5}{3} (Convergence of divergence term -- I)
   Let us show that 
    \begin{equation} \label{eq:divergence term - I}
        \begin{split}
            & \int_0^T \int_{\R^2} v\big( \eta *_2 \nuc_N(t) (x) \big)     \r(x) \cdot \nabla_x \xi(t,x)  \, \d \nuc_N(t)(x) \, \d t \\
            & \hspace{2cm} \to \int_0^T \int_{\R^2} v\big( \eta *_2 \muc(t) (x) \big)     \r(x) \cdot \nabla_x \xi(t,x)  \, \d \muc(t)(x) \, \d t    \quad \text{as } N \to +\infty \, . 
        \end{split}
    \end{equation}
    We start by splitting
    \begin{equation} \label{eq:2211281906}
        \begin{split}
            &  \Big|  \int_0^T \int_{\R^2} v\big( \eta *_2 \nuc_N(t) (x) \big)     \r(x) \cdot \nabla_x \xi(t,x)  \, \d \nuc_N(t)(x) \, \d t \\
            & \hspace{1.5cm} - \int_0^T \int_{\R^2} v\big( \eta *_2 \muc(t) (x) \big)     \r(x) \cdot \nabla_x \xi(t,x)  \, \d \muc(t)(x) \, \d t \Big|  \\
            & \leq \int_0^T \int_{\R^2} \Big|v\big( \eta_N *_2 \nuc_N(t) (x) \big) - v\big( \eta *_2 \muc(t) (x) \big) \Big| \Big|\r(x)\nabla_x \xi(t,x)\Big| \, \d \nuc_N(t)(x) \, \d t   \\
            & \hspace{0.5cm} + \Big| \int_0^T \int_{\R^2} v\big( \eta *_2 \muc(t) (x) \big)     \r(x) \cdot \nabla_x \xi(t,x)  \, \d \Big( \nuc_N(t) - \muc(t) \Big)(x) \, \d t \Big| \, .
        \end{split}
        \end{equation}
    By the Lipschitz continuity of $v$, by~\eqref{def:etaN}, by the Lipschitz continuity of $\eta$, and by Kantorovich's duality we have that for every $x \in \R^2$ and $t \in [0,T]$ 
    \begin{equation} 
        \begin{split}
            & \Big|v\big( \eta_N *_2 \nuc_N(t) (x) \big) - v\big( \eta *_2 \muc(t) (x) \big)\Big|  \leq C \Big|\eta_N *_2 \nuc_N(t) (x) -   \eta *_2 \muc(t) (x) \Big| \\
            & \leq C \Big|\eta_N *_2 \nuc_N(t) (x) -   \eta *_2 \nuc_N(t) (x) \Big| +  C \Big|\eta *_2 \nuc_N(t) (x) -   \eta *_2 \muc(t) (x) \Big| \\
            & \leq C   \int_{\R^2}  |  \eta_N(x,x-x')  -  \eta(x,x-x') | \, \d \nuc_N(t)(x')  + C \Big|\int_{\R^2}\eta(x,x-x') \, \d \Big(  \nuc_N(t) - \muc(t)  \Big)(x')   \Big| \\
            & \leq C \Big( \frac{1}{N-1} +  \sup_{s \in [0,T]}\W_1( \nuc_N(s), \muc(s) ) \Big) \, ,
        \end{split}
    \end{equation}  
    where the constant $C$ depends on $v$ and $\eta$. Integrating in time and space and using the fact that $|\r(x)| \leq C(1+|x|)$, thus it is bounded on the compact support of $\xi$, we obtain that 
    \begin{equation} \label{eq:2211281646}
        \begin{split}
            & \int_0^T \int_{\R^2} \Big|v\big( \eta_N *_2 \nuc_N(t) (x) \big) - v\big( \eta *_2 \muc(t) (x) \big)\Big| \Big| \r(x) \nabla_x \xi(t,x) \Big|  \, \d \nuc_N(t)(x) \, \d t \\
            & \hspace{1cm} \leq C \Big( \frac{1}{N-1} +  \sup_{s \in [0,T]}\W_1( \nuc_N(s), \muc(s) ) \Big) \, ,
        \end{split}
    \end{equation}
    where the constant $C$ depends on $v$, $\eta$, $\r$, $\xi$, and $T$.
    
    Moreover, the function $x \mapsto v\big( \eta *_2 \muc(t) (x) \big)     \r(x) \cdot \nabla_x \xi(t,x)$ is Lipschitz-continuous with a Lipschitz constant independent of $t$ and depending on $v$, $\eta$, $\r$, and $\xi$. For, $x \mapsto v\big( \eta *_2 \muc(t) (x) \big)$ satisfies the latter property, since
    \begin{equation} \label{eq:2211291659}        
        \begin{split}
            & | v\big( \eta *_2 \muc(t) (x) \big)  - v\big( \eta *_2 \muc(t) (x') \big)|  \leq C |  \eta *_2 \muc(t) (x) - \eta *_2 \muc(t) (x') |  \\
            & \hspace{0.5cm} \leq C \int_{\R^2} \Big|  \eta(x,x-x'') - \eta(x',x'-x'') \Big| \, \d \muc(t) (x'') \leq C |x - x'| \, ,
        \end{split}
    \end{equation}
    where the constant $C$ depends on $v$ and $\eta$. As above, $\r$ is bounded on the support of $\xi$. By the Lipschitz continuity of $\r$ and $\nabla_x \xi$, we conclude that the product $x \mapsto v\big( \eta *_2 \muc(t) (x) \big)     \r(x) \cdot \nabla_x \xi(t,x)$ is also Lipschitz-continuous. 
    Thus by Kantorovich's duality we obtain that for every $t \in [0,T]$
    \begin{equation} \label{eq:2211281647}
        \begin{split}
            & \Big|  \int_{\R^2} v\big( \eta *_2 \muc(t) (x) \big)     \r(x) \cdot \nabla_x \xi(t,x)  \, \d \Big( \nuc_N(t) - \muc(t)\Big)(x) \Big| \leq C \sup_{s \in [0,T]}\W_1(\nuc_N(s),\muc(s)) \, ,
        \end{split}
    \end{equation}
    where $C$ depends on $v$, $\eta$, $\r$, $\xi$. Combining \eqref{eq:2211281906}, \eqref{eq:2211281646}, and \eqref{eq:2211281647}, by~\eqref{eq:W1 converges} it follows that 
    \[
        \begin{split}
            &  \Big|  \int_0^T \int_{\R^2} v\big( \eta *_2 \nuc_N(t) (x) \big)     \r(x) \cdot \nabla_x \xi(t,x)  \, \d \nuc_N(t)(x) \, \d t \\
            & \hspace{1.5cm} - \int_0^T \int_{\R^2} v\big( \eta *_2 \muc(t) (x) \big)     \r(x) \cdot \nabla_x \xi(t,x)  \, \d \muc(t)(x) \, \d t \Big|  \\
            & \leq \int_0^T \int_{\R^2} C \Big|v\big( \eta_N *_2 \nuc_N(t) (x) \big) - v\big( \eta *_2 \muc(t) (x) \big) \Big|  \, \d \nuc_N(t)(x) \, \d t   \\
            & \hspace{0.5cm} + \Big| \int_0^T \int_{\R^2} v\big( \eta *_2 \muc(t) (x) \big)     \r(x) \cdot \nabla_x \xi(t,x)  \, \d \nuc_N(t)(x) \, \d t\\
            & \hspace{1.5cm} - \int_0^T \int_{\R^2} v\big( \eta *_2 \muc(t) (x') \big)     \r(x') \cdot \nabla_x \xi(t,x')  \, \d \muc(t)(x') \, \d t \Big| \\
            & \leq C \Big( \frac{1}{N-1} + \sup_{t \in [0,T]}\W_1( \nuc_N(t), \muc(t) ) \Big) \to 0 \quad \text{as } N \to +\infty \, ,
        \end{split}
    \]
    where the constant $C$ depends on $v$, $\eta$, $\r$, $\xi$, and $T$.

    \substep{5}{4} (Convergence of divergence term -- II) Let us prove that
    \begin{equation} \label{eq:divergence term - II}
        \begin{split}
            & \int_0^T \int_{\R^2} v\big( \eta_N *_2 \nuc_N(t) (x) \big)  \Kcp*\mubp_N(t)(x)  \cdot \nabla_x \xi(t,x)  \, \d \nuc_N(t)(x) \, \d t  \\
            & \hspace{1cm} \to \int_0^T \int_{\R^2} v\big( \eta *_2 \muc(t) (x) \big)  \Kcp*\mup(t)(x)  \cdot \nabla_x \xi(t,x)  \, \d \muc(t)(x) \, \d t \quad \text{as } N \to +\infty \, .
        \end{split}
    \end{equation}
    We start by splitting 
    \begin{equation} \label{eq:2211281912}
        \begin{split}
            & \Big| \int_0^T \int_{\R^2} v\big( \eta_N *_2 \nuc_N(t) (x) \big)  \Kcp*\mubp_N(t)(x)  \cdot \nabla_x \xi(t,x)  \, \d \nuc_N(t)(x) \, \d t  \\
            & \hspace{0.5cm} - \int_0^T \int_{\R^2} v\big( \eta *_2 \muc(t) (x) \big)  \Kcp*\mup(t)(x)  \cdot \nabla_x \xi(t,x)  \, \d \muc(t)(x) \, \d t \Big| \\
            & \hspace{0.1cm} \leq \int_0^T \int_{\R^2} \Big| v\big( \eta_N *_2 \nuc_N(t) (x) \big) - v\big( \eta *_2 \muc(t) (x) \big) \Big| \Big| \Kcp*\mubp_N(t)(x)  \cdot \nabla_x \xi(t,x) \Big| \, \d \nuc_N(t)(x) \, \d t \\
            & \hspace{0.5cm} + \int_0^T \int_{\R^2} \Big| v\big( \eta *_2 \muc(t) (x) \big) \Big| \Big|\Big( \Kcp*\mubp_N(t)(x)  - \Kcp*\mup(t)(x) \Big) \cdot \nabla_x \xi(t,x) \Big| \, \d \nuc_N(t)(x) \, \d t \\
            & \hspace{0.5cm} + \Big| \int_0^T \int_{\R^2}   v\big( \eta *_2 \muc(t) (x) \big)    \Kcp*\mup(t)(x)  \cdot \nabla_x \xi(t,x)  \, \d \Big( \nuc_N(t) - \muc(t)\Big) (x) \, \d t  \Big|  \, .
        \end{split}
    \end{equation}
    For the first term in the right-hand side of~\eqref{eq:2211281912}, we argue analogously to~\eqref{eq:2211281646} to obtain that 
    \[
        \begin{split}
            & \int_0^T \int_{\R^2} \Big| v\big( \eta_N *_2 \nuc_N(t) (x) \big) - v\big( \eta *_2 \muc(t) (x) \big) \Big| \Big| \Kcp*\mubp_N(t)(x)  \cdot \nabla_x \xi(t,x) \Big| \, \d \nuc_N(t)(x) \, \d t \\
            & \leq C \Big( \frac{1}{N-1} +  \sup_{s \in [0,T]}\W_1( \nuc_N(s), \muc(s) ) \Big) \to 0 \quad \text{as } N \to +\infty \, ,
        \end{split}
    \]
    where the constant $C$ depends on $v$, $\eta$, $\Kcp$, $\xi$, and $T$. The only difference consists in the fact that we have $\Kcp*\mubp_N(t)(x)$ in place of $\r(x)$. For this, we need to observe that
    \begin{equation} \label{eq:2211291704}
        \begin{split}
            |\Kcp*\mubp_N(t)(x)| & = \Big| \int_{\R^2} \Kcp(x - y) \, \d \mubp_N(t)(y) \Big| \leq \int_{\R^2} \Big( |\Kcp(0)| + C|x| + C|y| \Big) \, \d \mubp_N(t)(y) \\
            & \leq C (1 + |x|) \, .
        \end{split}
    \end{equation}
    In the last inequality, we used the fact that, since $\mubp_N(t)$ is the law of $\bar Y^N(t)$,  
    \[
    \int_{\R^2} |y| \, \d \mubp_N(t)(y) \leq \E(\|\bar Y^N\|_\infty) \leq C \, ,
    \]
    where the boundedness follows from the convergence~\eqref{eq:YN converges}.

    For the second term in the right-hand side of~\eqref{eq:2211281912}, we start by observing that $\Kcp$ is Lipschitz, thus we have for every $x \in \R^2$ and $t \in [0,T]$ 
    \[
        \begin{split}
            & \Big| \Kcp*\mubp_N(t)(x)  - \Kcp*\mup(t)(x)\Big|  \\
            & \quad = \Big| \int_{\R^2} \Kcp(x - y) \, \d \mubp_N(t)(y) - \int_{\R^2} \Kcp(x - y') \, \d \mup(t) (y') \Big|  \\
            & \quad = \Big| \E\Big( \Kcp(x - \bar Y^N(t)) - \Kcp(x - \bar Y(t) )  \Big) \Big|  \leq C \E\Big( |  \bar Y^N(t) -  \bar Y(t)  | \Big)  \leq \E\Big(\|\bar Y^N - \bar Y\|_\infty\Big)
        \end{split} 
    \]
    By~\eqref{eq:YN converges}, it follows that 
    \[
        \begin{split}
            & \int_0^T \int_{\R^2} \Big| v\big( \eta *_2 \muc(t) (x) \big) \Big| \Big|  \Kcp*\mubp_N(t)(x)  - \Kcp*\mup(t)(x) \Big| \Big| \nabla_x \xi(t,x) \Big| \, \d \nuc_N(t)(x) \, \d t     \\
            & \hspace{1cm} \leq C T \|v\|_\infty \|\nabla_x \xi\|_\infty \E\Big(\|\bar Y^N - \bar Y\|_\infty\Big) \to 0 \quad \text{as } N \to +\infty  \, .
        \end{split}
    \]

    For the third term in the right-hand side of~\eqref{eq:2211281912}, we observe that the function $x \mapsto v\big( \eta *_2 \muc(t) (x) \big)    \Kcp*\mup(t)(x)  \cdot \nabla_x \xi(t,x)$ is Lipschitz-continuous with a Lipschitz constant independent of $t$ and depending on $v$, $\eta$, $\Kcp$, $\mup$, and $\xi$. This follows from~\eqref{eq:2211291659}, from the fact that $\xi$ is compactly supported, and the inequality 
    \[
        |\Kcp*\mup(t)(x)|  \leq C (1 + |x|)
    \]
    obtained as in~\eqref{eq:2211291704}. By Kantorovich's duality,
    \[
        \begin{split}
            & \Big| \int_0^T \int_{\R^2}   v\big( \eta *_2 \muc(t) (x) \big)    \Kcp*\mup(t)(x)  \cdot \nabla_x \xi(t,x)  \, \d \Big( \nuc_N(t) - \muc(t)\Big) (x) \, \d t  \Big| \\
            & \quad \leq C \W_1(\nuc_N(t),\muc(t))  \leq \sup_{s \in [0,T]} C \W_1( \nuc_N(s),\muc(s) ) \to 0 \quad \text{as } N \to +\infty \, ,
        \end{split}
    \]
    by~\eqref{eq:W1 converges}.
   
    \substep{5}{5} (Conclusion) Combining~\eqref{eq:nucN distributional solution}, \eqref{eq:initial datum term}, \eqref{eq:time-derivative term}, \eqref{eq:divergence term - I}, and~\eqref{eq:divergence term - II}, we conclude the proof of~\eqref{eq:muc distributional solution}.

    We prove the uniqueness of the solution in Theorem~\ref{thm:uniqueness of PDE/SDE/ODE} below.

    \end{proof}

    \begin{theorem} \label{thm:uniqueness of PDE/SDE/ODE}
        Under the assumptions of Theorem~\ref{thm:N to infty}, the solution $\muc \in C^0([0,T];\Pcal_1(\R^2))$, $(\bar Y(t))_{t \in [0,T]}$, and $Z=(Z_1,\dots,Z_L)$ to~\eqref{eq:limit PDE/SDE/ODE} is unique.
    \end{theorem}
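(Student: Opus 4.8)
The plan is to compare two solutions of~\eqref{eq:limit PDE/SDE/ODE} sharing the same data and to close a Gr\"onwall loop between the conservation law and the SDE. First, the equation for $Z$ is decoupled from the rest and, as recalled in the proofs of Proposition~\ref{prop:solution to ODE/SDE} and Proposition~\ref{prop:solution to averaged ODE/SDE}, admits a unique solution; hence we may assume that two solutions $(\muc_1,\bar Y_1,Z)$ and $(\muc_2,\bar Y_2,Z)$ share the same curve $Z$, are built on the same Brownian motion $(W(t))_{t\in[0,T]}$, and start from the same initial datum $\bar Y^0$. It then suffices to show $\muc_1=\muc_2$ and $\bar Y_1=\bar Y_2$ a.s.

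The first step is a characteristics representation of $\muc_i$. Freezing $\muc_i$ and $\mup_i=\Law(\bar Y_i)$ in the flux, the vector field
\[
b_i(t,x):=v\big(\eta*_2\muc_i(t)(x)\big)\big(\r(x)+\Kcp*\mup_i(t)(x)\big)
\]
is continuous in $t$ (since $t\mapsto\muc_i(t)$ is continuous in $\Pcal_1(\R^2)$ and $t\mapsto\mup_i(t)$ is continuous, being the law of a process with a.s.\ continuous paths), locally Lipschitz in $x$ uniformly in $t$ (using the Lipschitz bounds on $v$, $\r$, $\Kcp$ and the estimate~\eqref{eq:2211291659} for $x\mapsto\eta*_2\muc_i(t)(x)$), and of at most linear growth in $x$. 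Hence it generates a global flow $\Phi^i_t$, and — this being the point where I would invoke the classical well-posedness of the continuity equation with a (locally) Lipschitz, linear-growth velocity field, either by citation or via a short duality argument transporting a test function along the backward flow — the distributional solution of $\de_t\muc_i+\div_x(b_i\muc_i)=0$ with $\muc_i(0)=\muc_0$ is unique and equals $(\Phi^i_t)_\#\muc_0$. In particular a Gr\"onwall estimate on the characteristics started in $\bar B_{R_0}=\supp(\muc_0)$ shows that $\supp\muc_i(t)\subset\bar B_R$ for all $t\in[0,T]$ with $R$ independent of $i$, so only the values of $b_i$ on a fixed compact set will matter below.

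Next I would estimate the relevant distances. Using the transport plan $(\Phi^1_t,\Phi^2_t)_\#\muc_0$ and $|\Phi^1_t(x)-\Phi^2_t(x)|\le\int_0^t|b_1(s,\Phi^1_s(x))-b_2(s,\Phi^2_s(x))|\,\d s$, I split the integrand into a term controlled by the Lipschitz constant of $b_1$ on $\bar B_R$ and a term $|b_1(s,y)-b_2(s,y)|$ which, by the Lipschitz continuity of $v$ and $\Kcp$ and by Kantorovich's duality, is bounded on $\bar B_R$ by $C\big(\W_1(\muc_1(s),\muc_2(s))+\W_1(\mup_1(s),\mup_2(s))\big)$. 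Integrating in $\muc_0$ and applying Gr\"onwall's inequality yields, with $\phi(t):=\int_{\R^2}|\Phi^1_t(x)-\Phi^2_t(x)|\,\d\muc_0(x)\ge\W_1(\muc_1(t),\muc_2(t))$,
\[
\phi(t)\le C\int_0^t\W_1(\mup_1(s),\mup_2(s))\,\d s\,.
\]
In parallel, subtracting the two SDEs (same $W$, same $\bar Y^0$) and arguing exactly as in Substep~4.2 of the proof of Theorem~\ref{thm:N to infty} — Lipschitz continuity of $\Kpg$ and $\Kpc$, Kantorovich's duality, then $\sup$ in time and expectation —
\[
\E\Big(\sup_{[0,t]}|\bar Y_1-\bar Y_2|\Big)\le C\int_0^t\E\Big(\sup_{[0,s]}|\bar Y_1-\bar Y_2|\Big)\,\d s+C\int_0^t\W_1(\muc_1(s),\muc_2(s))\,\d s\,.
\]
Since $\W_1(\mup_1(s),\mup_2(s))\le\E(|\bar Y_1(s)-\bar Y_2(s)|)$ via the diagonal coupling, combining the two displays (together with $\W_1(\muc_i(s),\muc_j(s))\le\phi(s)$) and one further Gr\"onwall argument forces $\E(\sup_{[0,T]}|\bar Y_1-\bar Y_2|)=0$, hence $\bar Y_1=\bar Y_2$ a.s., $\mup_1=\mup_2$, and then $\phi\equiv0$, i.e.\ $\muc_1=\muc_2$. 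Together with the uniqueness of $Z$ this gives the claim.

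The main obstacle is the representation $\muc_i(t)=(\Phi^i_t)_\#\muc_0$, that is, the uniqueness of distributional solutions of the (now linear) continuity equation with a time-continuous, space-Lipschitz, linear-growth field, and the accompanying verification that \emph{every} solution of~\eqref{eq:limit PDE/SDE/ODE}, not merely the one constructed in Theorem~\ref{thm:N to infty}, retains uniformly compact support so that the linear-growth terms are harmless. Everything downstream is a routine coupled Gr\"onwall estimate; the care lies precisely in justifying this representation.
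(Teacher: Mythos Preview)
Your proposal is correct and takes essentially the same approach as the paper: flow representation $\muc_i(t)=(\Phi^i_t)_\#\muc_0$, coupled Gr\"onwall estimates between $\W_1(\muc_1,\muc_2)$ and $\E(\|\bar Y_1-\bar Y_2\|_\infty)$, and closure via the diagonal coupling $\W_1(\mup_1(s),\mup_2(s))\le\E(|\bar Y_1(s)-\bar Y_2(s)|)$. For the Lagrangian representation you flag as the main obstacle, the paper simply invokes \cite[Theorem~5.34]{Vil2} (crediting the idea to \cite{PicRos,PicRos2}); the only other, cosmetic, difference is that the paper carries possibly different initial data through the two key estimates before specializing to equal ones.
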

    \begin{proof}
        The uniqueness of $Z$ is direct, as the ODE for $Z$ is decoupled from the first two equations. 

        Assume now that $\muc_i \in C^0([0,T];\Pcal_1(\R^2))$, $(\bar Y_i(t))_{t \in [0,T]}$ for $i=1,2$ are solutions to~\eqref{eq:limit PDE/SDE/ODE} with the same initial data, \ie,
        \begin{equation} \label{eq:two problems}
            \left\{
                \begin{aligned}
                    & \de_t \muc_i + \div_x \Big( v\big( \eta *_2 \muc_i \big) \big(  \r + \Kcp*\mup_i   \big) \muc_i \Big) = 0 \, ,\\
                    & \d \bar Y_i(t) = \Big(\frac{1}{L} \sum_{\ell=1}^L \Kpg( \bar Y_i(t) - Z_\ell(t))  -   \Kpc*\muc_i(t)(\bar Y_i(t)) \Big) \d t  + \sqrt{2\kappa} \, \d W(t) \, ,   \\
                    & \frac{\d Z_\ell}{\d t}(t)  = \Big( \frac{1}{L}\sum_{\ell'=1}^L \Kgg(Z_\ell(t) - Z_{\ell'}(t))  + u_\ell(t)  \Big) \d t \, ,  \\ 
                    & \muc_i(0) = \muc_0 \, , \\
                    & \bar Y_i(0)  = \bar Y^0 \ \text{a.s.,} \quad \mup_i = \Law(\bar Y_i)  \, , \\
                    & Z_\ell(0) =  Z_\ell^0 \, , \ \ell = 1,\dots, L \, .
                \end{aligned}
                \right.    
            \end{equation}
        where $\supp(\muc_i(0)) = \supp(\muc_0) \subset \bar B_R$. As customary in uniqueness proofs for evolutionary problems, we will temporary neglect the assumption that the initial data $\bar Y_1(0)$ and $\bar Y_2(0)$ are a.s.\ equal and $\muc_1(0)$ and $\muc_2(0)$ are equal in order to carry out a Gr\"onwall-type argument to deduce stability with respect to initial data.  The objective is to prove the following pair of estimates:
        \begin{gather}
            \E( \|\bar Y_1 - \bar Y_2\|_\infty ) \leq C \Big( \E(|\bar Y_1(0) - \bar Y_2(0)| ) +  \int_0^t \W_1(\muc_1(s),\muc_2(s)) \, \d s \Big) \, ,  \label{eq:aimed Gronwall barY} \\
            \W_1(\muc_1(t), \muc_2(t))  \leq  C \Big( \W_1(\muc_1(0), \muc_2(0))    + \E(\|\bar Y_1 - \bar Y_2\|_\infty) \Big) \, .  \label{eq:aimed Gronwall W1}
        \end{gather}
        These two inequalities provide uniqueness when combined. Indeed, if $\bar Y_1(0) = \bar Y^0 = \bar Y_2(0)$ a.s.\ and $\muc_1(0) = \muc_0 = \muc_2(0)$, then \eqref{eq:aimed Gronwall barY} simply reads 
        \begin{equation*}
            \E( \|\bar Y_1 - \bar Y_2\|_\infty ) \leq C \int_0^t \W_1(\muc_1(s),\muc_2(s)) \, \d s  \, .
        \end{equation*}
        Substituting into~\eqref{eq:aimed Gronwall W1}, we get that 
        \begin{equation*}
            \W_1(\muc_1(t), \muc_2(t))  \leq  C \int_0^t \W_1(\muc_1(s),\muc_2(s)) \, \d s  \, ,
        \end{equation*}
        which by Gr\"onwall's inequality yields $\W_1(\muc_1(t), \muc_2(t))  = 0$ for all $t \in [0,T]$. Then~\eqref{eq:aimed Gronwall barY} gives $\E( \|\bar Y_1 - \bar Y_2\|_\infty ) = 0$.

        We divide the proof of~\eqref{eq:aimed Gronwall barY}--\eqref{eq:aimed Gronwall W1} in several steps.
        
         \step{1} (Estimate of $\E(\|\bar Y_1 - \bar Y_2 \|_\infty)$) 
         By~\eqref{eq:two problems} we have that a.s.\ for $0 \leq s \leq t \leq T$
        \begin{equation} \label{eq:2211301855}
            \begin{split}
                & |\bar Y_1(s) - \bar Y_2(s)| \\
                &  \leq |\bar Y_1(0) - \bar Y_2(0)| + \int_0^s \Big| \Big(\frac{1}{L} \sum_{\ell=1}^L \Kpg( \bar Y_1(r) - Z_\ell(r))  -   \Kpc*\muc_1(r)(\bar Y_1(r)) \Big) \\
                & \hspace{4cm} -  \Big(\frac{1}{L} \sum_{\ell=1}^L \Kpg( \bar Y_2(r) - Z_\ell(r))  -   \Kpc*\muc_2(r)(\bar Y_2(r)) \Big) \Big| \d r \\
                & \leq |\bar Y_1(0) - \bar Y_2(0)| + \frac{1}{L} \sum_{\ell=1}^K \int_0^s |\Kpg( \bar Y_1(r) - Z_\ell(r)) - \Kpg( \bar Y_2(r) - Z_\ell(r))| \, \d r \\
                & \hspace{4cm} + \int_0^s |\Kpc*\muc_1(r)(\bar Y_1(r)) - \Kpc*\muc_1(r)(\bar Y_2(r))| \, \d r \\
                & \hspace{4cm} + \int_0^s |\Kpc*\muc_1(r)(\bar Y_2(r)) - \Kpc*\muc_2(r)(\bar Y_2(r))| \, \d r \, .
            \end{split}
        \end{equation}
        The first integrand in~\eqref{eq:2211301855} is bounded using the Lipschitz continuity of $\Kpg$ by 
        \begin{equation} \label{eq:2211301856}
            |\Kpg( \bar Y_1(r) - Z_\ell(r)) - \Kpg( \bar Y_2(r) - Z_\ell(r))| \leq C |\bar Y_1(r) -  \bar Y_2(r)| \, .
        \end{equation}
        The second integrand in~\eqref{eq:2211301855} is estimated using the Lipschitz continuity of $\Kpc$ as follows 
        \begin{equation} \label{eq:2211301857}
            \begin{split}
                &  |\Kpc*\muc_1(r)(\bar Y_1(r)) - \Kpc*\muc_1(r)(\bar Y_2(r))| \\
                & \quad \leq  \int_{\R^2} | \Kpc(\bar Y_1(r) - x)  -  \Kpc(\bar Y_2(r) - x)  |  \, \d \muc_1(r)(x) \leq C |\bar Y_1(r) - \bar Y_2(r)| \, .
            \end{split}
        \end{equation}
        The third integrand in~\eqref{eq:2211301855} is estimated using the Lipschitz continuity of $\Kpc$ by Kantorovich's duality
        \begin{equation} \label{eq:2211301858}
            \begin{split}
                |\Kpc*\muc_1(r)(\bar Y_2(r)) - \Kpc*\muc_2(r)(\bar Y_2(r))| & = \Big| \int_{\R^2} \Kpc(\bar Y_2(r) - x) \, \d \Big( \muc_1(r) - \muc_2(r) \Big)(x) \Big|   \\
                & \leq C \W_1(\muc_1(r),\muc_2(r)) \, .
            \end{split}
        \end{equation}
        
        Combining~\eqref{eq:2211301855}--\eqref{eq:2211301858} we infer that a.s.\ for $0 \leq s \leq t \leq T$
        \begin{equation*}
            |\bar Y_1(s) - \bar Y_2(s)| \leq |\bar Y_1(0) - \bar Y_2(0)| + \int_0^s |\bar Y_1(r) -  \bar Y_2(r)| \, \d r + C \int_0^s \W_1(\muc_1(r),\muc_2(r)) \, \d r \, .
        \end{equation*}
        Taking the supremum in $s$ and the expectation we deduce that 
        \begin{equation*}
            \begin{split}
                \E\Big( \sup_{0\leq s \leq t}|\bar Y_1(s) - \bar Y_2(s)| \Big) & \leq \E(|\bar Y_1(0) - \bar Y_2(0)| ) + C \int_0^t \W_1(\muc_1(s),\muc_2(s)) \, \d s \\
                & \quad + C \int_0^t \sup_{0 \leq r \leq s} \E\Big( |\bar Y_1(r) -  \bar Y_2(r)| \Big) \, \d r  \, .
            \end{split}
        \end{equation*}
        By Gr\"onwall's inequality 
        \begin{equation*}
            \E\Big( \sup_{0\leq s \leq t}|\bar Y_1(s) - \bar Y_2(s)| \Big)  \leq \Big( \E(|\bar Y_1(0) - \bar Y_2(0)| ) + C \int_0^t \W_1(\muc_1(s),\muc_2(s)) \, \d s \Big) e^{Ct} \, 
        \end{equation*}
        for $t \in [0,T]$ and, in particular, 
        \begin{equation*}
            \E( \|\bar Y_1 - \bar Y_2\|_\infty ) \leq C \Big( \E(|\bar Y_1(0) - \bar Y_2(0)| ) +  \int_0^t \W_1(\muc_1(s),\muc_2(s)) \, \d s \Big)
        \end{equation*}
        for $t \in [0,T]$, where the constant $C$ also depends on $T$.  

        \step{2} (Introducing the flow for the transport equation) Following an idea in~\cite{PicRos, PicRos2}, we prove uniqueness by regarding the solutions of the transport equation from a Lagrangian point of view. Let us consider for every $x \in \supp( \muc_i(0))$ the flow 
        \begin{equation} \label{eq:flow Phi}
            \left\{
                \begin{aligned}
                    & \de_t \Phi_i(t,x) =  v\Big( \eta *_2 \muc_i(t)(\Phi_i(t,x)) \Big) \Big(  \r(\Phi_i(t,x)) + \Kcp*\mup_i(t)(\Phi_i(t,x))   \Big)  \, , \\
                    & \Phi_i(0,x) = x \, .
                \end{aligned}
                \right.
            \end{equation}
        Then $\muc_i(t) = \Phi_i(t,\cdot)_\# \muc_i(0)$, see~\cite[Theorem~5.34]{Vil2}.

        Let us show that the flows $\Phi_i$ are bounded. We notice that 
        \[
            \begin{split}
                & |\Kcp * \mup_i(t)(X)|  \leq \int_{\R^2} \Big( |\Kcp(0)| + |\Kcp(X - y) - \Kcp(0)| \Big) \, \d \mup_i(t)(y) \\
                & \leq \int_{\R^2} C ( 1 + |X| + |y| ) \, \d \mup_i(t)(y) \leq C( 1+|X| + \E(\|\bar Y_i\|_\infty)) \leq C(1+|X|) \, ,
            \end{split}
        \]
        where we used the bound~\eqref{eq:bound on bar Y}. By~\eqref{eq:flow Phi} and by the estimate $|r(X)| \leq C(1+|X|)$ we deduce that for every $x \in \bar B_R$
        \[
        \begin{split}
            |\Phi_i(t,x)| & \leq |x| + \int_0^t   \|v\|_\infty \Big(  |\r(\Phi_i(s,x))| + |\Kcp*\mup_i(s)(\Phi_i(s,x))|   \Big)   \\
            & \leq |x| + \int_0^t C (1 + |\Phi_i(s,x)|) \, \d s = |x| + Ct + \int_0^t C |\Phi_i(s,x)| \, \d s \, .
        \end{split}
        \]
        By Gr\"onwall's inequality and since $x \in \bar B_R$,  we obtain that 
        \begin{equation} \label{eq:flow is bounded}
            |\Phi_i(t,x)| \leq (|x| + Ct) e^{C t} \leq (R + CT) e^{CT} \leq C \quad \text{for } t \in [0,T] \, ,
        \end{equation}
        where the constant $C$ depends on $\|v\|_\infty$, $\r$, $\Kcp$, $R$, and $T$ (in addition to the constant in~\eqref{eq:bound on bar Y}).
 
        In what follows we will show that 
        \begin{equation} \label{eq:2211301645}
            |\Phi_1(t,x) - \Phi_2(t,x')| \leq C |x - x'|  + C  \Big( \int_0^t  \W_1( \mup_1(s),  \mup_2(s)) \, \d s  + \int_0^t \W_1(\muc_1(s),\muc_2(s)) \, \d s \Big)  \, ,
        \end{equation}
        for $x$, $x' \in \bar B_R$ and $t \in [0,T]$. 
        
        We start by observing that 
        \begin{equation} \label{eq:inequality with Phi}
            |\Phi_1(t,x) - \Phi_2(t,x')| \leq |\Phi_1(t,x) - \Phi_2(t,x)| + |\Phi_2(t,x) - \Phi_2(t,x')|
        \end{equation}
        for every $x$, $x' \in \R^2$ and $t \in [0,T]$. 
        
        \step{3} (Estimate of $|\Phi_1(t,x) - \Phi_2(t,x)|$)  We estimate the first term in the right-hand side of~\eqref{eq:inequality with Phi} as follows:
        \begin{equation} \label{eq:Phi1 - Phi2}
            \begin{split}
            & |\Phi_1(t,x) - \Phi_2(t,x)| \\
            & = \Big| \int_0^t v\Big( \eta *_2 \muc_1(s)(\Phi_1(s,x)) \Big) \Big(  \r(\Phi_1(s,x)) + \Kcp*\mup_1(s)(\Phi_1(s,x))\Big) \, \d s  \\
            & \quad - \int_0^t v\Big( \eta *_2 \muc_2(s)(\Phi_2(s,x)) \Big) \Big(  \r(\Phi_2(s,x)) + \Kcp*\mup_2(s)(\Phi_2(s,x))\Big) \, \d s \Big| \\
            & \leq \int_0^t \Big| v\Big( \eta *_2 \muc_1(s)(\Phi_1(s,x)) \Big) - v\Big( \eta *_2 \muc_2(s)(\Phi_2(s,x)) \Big) \Big|  |\r(\Phi_1(s,x))| \, \d s \\
            & \quad + \int_0^t \Big| v\Big( \eta *_2 \muc_1(s)(\Phi_1(s,x)) \Big) - v\Big( \eta *_2 \muc_2(s)(\Phi_2(s,x)) \Big) \Big|  |\Kcp*\mup_1(s)(\Phi_1(s,x))|  \, \d s  \\
            &  \quad + \int_0^t \|v\|_\infty  |\r(\Phi_1(s,x)) - \r(\Phi_2(s,x))| \, \d s \\
            & \quad + \int_0^t \|v\|_\infty |\Kcp*\mup_1(s)(\Phi_1(s,x)) - \Kcp*\mup_1(s)(\Phi_2(s,x))| \, \d s   \\
            & \quad + \int_0^t \|v\|_\infty|\Kcp*\mup_1(s)(\Phi_2(s,x)) - \Kcp*\mup_2(s)(\Phi_2(s,x))|  \, \d s  \, .
            \end{split}
        \end{equation}
        In the following substeps we estimate the five terms on the right-hand side of~\eqref{eq:Phi1 - Phi2}. 

        \substep{3}{1} Let us estimate the first term in the right-hand side of~\eqref{eq:Phi1 - Phi2}. For $x \in \supp( \muc_i(0))$ and $s \in [0,T]$ we split
        \begin{equation} \label{eq:2211301447}
            \begin{split}
                & \Big| v\Big( \eta *_2 \muc_1(s)(\Phi_1(s,x)) \Big) - v\Big( \eta *_2 \muc_2(s)(\Phi_2(s,x)) \Big) \Big| \\
                & \quad  \leq \Big| v\Big( \eta *_2 \muc_1(s)(\Phi_1(s,x)) \Big) - v\Big( \eta *_2 \muc_1(s)(\Phi_2(s,x)) \Big) \Big| \\
                & \quad \quad + \Big| v\Big( \eta *_2 \muc_1(s)(\Phi_2(s,x)) \Big) - v\Big( \eta *_2 \muc_2(s)(\Phi_2(s,x)) \Big) \Big| \, .
            \end{split}
        \end{equation}
        We exploit the Lipschitz continuity of $v$ and $\eta$ to obtain that 
        \begin{equation} \label{eq:2211301448}
            \begin{split}    
                & \Big| v\Big( \eta *_2 \muc_1(s)(\Phi_1(s,x)) \Big) - v\Big( \eta *_2 \muc_1(s)(\Phi_2(s,x)) \Big) \Big| \\
                & \quad  \leq C | \eta *_2 \muc_1(s)(\Phi_1(s,x))  - \eta *_2 \muc_1(s)(\Phi_2(s,x)) |\leq |\Phi_1(s,x) - \Phi_2(s,x)| \, .
            \end{split}
        \end{equation}
        Moreover, we use the Lipschitz continuity of $x' \mapsto \eta(\Phi_2(s,x), \Phi_2(s,x) - x')$ and Kantorovich's duality to deduce that 
        \begin{equation} \label{eq:2211301449}
            \begin{split}
                & \Big| v\Big( \eta *_2 \muc_1(s)(\Phi_2(s,x)) \Big) - v\Big( \eta *_2 \muc_2(s)(\Phi_2(s,x)) \Big) \Big| \\
                & \leq C |\eta *_2 \muc_1(s)(\Phi_2(s,x)) - \eta *_2 \muc_2(s)(\Phi_2(s,x))| \\
                & \leq C \Big| \int_{\R^2} \eta(\Phi_2(s,x), \Phi_2(s,x) - x') \, \d \Big(  \muc_1(s) -  \muc_2(s)\Big)(x')\Big| \\
                & \leq C \W_1( \muc_1(s),  \muc_2(s)) \, .
            \end{split}
        \end{equation}
        By~\eqref{eq:flow is bounded} we have that for $x \in \bar B_R$ and $t \in [0,T]$ 
        \begin{equation} \label{eq:2211301450}
            |\r(\Phi_1(s,x))| \leq C(1 + |\Phi_1(s,x)|) \leq C \, .
        \end{equation}
        By \eqref{eq:2211301447}--\eqref{eq:2211301450} we get that for every $x \in \bar B_R$ and $t \in [0,T]$ 
        \begin{equation} \label{eq:2211301451}
            \begin{split}
                & \int_0^t \Big| v\Big( \eta *_2 \muc_1(s)(\Phi_1(s,x)) \Big) - v\Big( \eta *_2 \muc_2(s)(\Phi_2(s,x)) \Big) \Big|  |\r(\Phi_1(s,x))| \, \d s \\
                & \leq C \int_0^t \Big(|\Phi_1(s,x) - \Phi_2(s,x)| + \W_1( \muc_1(s),  \muc_2(s)) \Big) \, \d s \, .
            \end{split}
        \end{equation}

        \substep{3}{2} Let us estimate the second term in the right-hand side of~\eqref{eq:Phi1 - Phi2}. By the Lipschitz-continuity of $\Kcp$, we observe that for $x \in \bar B_R$ and $s \in [0,T]$
        \[
            \begin{split}
                |\Kcp*\mup_1(s)(\Phi_1(s,x))| & \leq  \int_{\R^2} |\Kcp(\Phi_1(s,x) - y)| \, \d \mup_1(s)(y)  \\
                & \leq \int_{\R^2} \Big( |\Kcp(0)| + C  |\Phi_1(s,x)| + C |y|\Big) \, \d \mup_1(s)(y)  \\
                & \leq C(1 + |\Phi_1(s,x)|) \leq C \, ,
            \end{split}
        \]
        where we used~\eqref{eq:bound on bar Y} and \eqref{eq:flow is bounded}. Then, as for \eqref{eq:2211301451}, we have that 
        \begin{equation} \label{eq:2211301452}
            \begin{split}
                & \int_0^t \Big| v\Big( \eta *_2 \muc_1(s)(\Phi_1(s,x)) \Big) - v\Big( \eta *_2 \muc_2(s)(\Phi_2(s,x)) \Big) \Big|  |\Kcp*\mup_1(s)(\Phi_1(s,x))| \, \d s \\
                & \leq C \int_0^t \Big(|\Phi_1(s,x) - \Phi_2(s,x)| + \W_1( \muc_1(s),  \muc_2(s)) \Big) \, \d s \, .
            \end{split}
        \end{equation}
    
        \substep{3}{3} Let us estimate the third term in the right-hand side of~\eqref{eq:Phi1 - Phi2}. By the Lipschitz continuity of $\r$, we get that 
        \begin{equation} \label{eq:2211301453}
            \int_0^t \|v\|_\infty  |\r(\Phi_1(s,x)) - \r(\Phi_2(s,x))| \, \d s   \leq C \int_0^t  |\Phi_1(s,x) -  \Phi_2(s,x) | \, \d s \, . 
        \end{equation}

        \substep{3}{4} Let us estimate the fourth term in the right-hand side of~\eqref{eq:Phi1 - Phi2}. We exploit the Lipschitz continuity of $\Kcp$ to deduce that 
        \[
            \begin{split}
                & |\Kcp*\mup_1(s)(\Phi_1(s,x)) - \Kcp*\mup_1(s)(\Phi_2(s,x))| \\
                & \quad \leq  \int_{\R^2} | \Kcp(\Phi_1(s,x) -  y) - \Kcp( \Phi_2(s,x) - y)| \, \d \mup_1(s)  \leq  C | \Phi_1(s,x) - \Phi_2(s,x)| \, , 
            \end{split}
        \]
        from which it follows that 
        \begin{equation} \label{eq:2211301454}
            \begin{split}
                & \int_0^t \|v\|_\infty |\Kcp*\mup_1(s)(\Phi_1(s,x)) - \Kcp*\mup_1(s)(\Phi_2(s,x))| \, \d s  \\
                & \leq C \int_0^t | \Phi_1(s,x) - \Phi_2(s,x)| \, \d s \, .
            \end{split}
        \end{equation}

        \substep{3}{5} Let us estimate the fifth term in the right-hand side of~\eqref{eq:Phi1 - Phi2}. By the Lipschitz continuity of $y \mapsto \Kcp(\Phi_2(s,x) - y)$ we have that 
        \[
            \begin{split}
                & |\Kcp*\mup_1(s)(\Phi_2(s,x)) - \Kcp*\mup_2(s)(\Phi_2(s,x))| \\
                & \quad = \Big| \int_{\R^2} \Kcp(\Phi_2(s,x) - y) \, \d \Big( \mup_1(s) - \mup_2(s) \Big)(y) \Big|  \\
                & \quad \leq  \E\Big( | \Kcp(\Phi_2(s,x) - \bar Y_1(s)) -  \Kcp(\Phi_2(s,x) - \bar Y_2(s)) |  \Big)  \leq C \E( |\bar Y_1(s) - \bar Y_2(s)|  ) \\
                & \quad \leq C \E(\|\bar Y_1 - \bar Y_2\|_\infty) \, ,
            \end{split}
        \]
        from which we infer that  
        \begin{equation}\label{eq:2211301455}
            \int_0^t \|v\|_\infty|\Kcp*\mup_1(s)(\Phi_2(s,x)) - \Kcp*\mup_2(s)(\Phi_2(s,x))|  \, \d s \leq C \E(\|\bar Y_1 - \bar Y_2\|_\infty) \, ,
        \end{equation}
        the constant $C$ depending also on $T$.

        \substep{3}{6} Combining~\eqref{eq:Phi1 - Phi2}, \eqref{eq:2211301451}, \eqref{eq:2211301452}, \eqref{eq:2211301453}, \eqref{eq:2211301454}, and~\eqref{eq:2211301455} we obtain that 
        \begin{equation*}
            \begin{split}
                |\Phi_1(t,x) - \Phi_2(t,x)|  & \leq  C \E(\|\bar Y_1 - \bar Y_2\|_\infty)  + C \int_0^t \W_1(\muc_1(s),\muc_2(s)) \, \d s   \\
                & \quad + C \int_0^t  |\Phi_1(s,x) - \Phi_2(s,x)|   \, \d s  \, .
            \end{split}
        \end{equation*}
        By Gr\"onwall's inequality this yields 
        \begin{equation*}
            |\Phi_1(t,x) - \Phi_2(t,x)|  \leq C e^{Ct} \Big( \E(\|\bar Y_1 - \bar Y_2\|_\infty)  + \int_0^t \W_1(\muc_1(s),\muc_2(s)) \, \d s \Big)
        \end{equation*}
        for $t \in [0,T]$ and, in particular,
        \begin{equation} \label{eq:2211301551}
            |\Phi_1(t,x) - \Phi_2(t,x)|  \leq C  \Big( \E(\|\bar Y_1 - \bar Y_2\|_\infty)  + \int_0^t \W_1(\muc_1(s),\muc_2(s)) \, \d s \Big)  \, ,
        \end{equation}
        for $t \in [0,T]$, with $C$ depending also on $T$. 

        \step{4} (Estimate of $|\Phi_2(t,x) - \Phi_2(t,x')|$) Let us estimate the second term in the right-hand side of~\eqref{eq:inequality with Phi}. By~\eqref{eq:flow Phi}, we have that 
        \begin{equation*} 
            \begin{split}
                & |\Phi_2(t,x) - \Phi_2(t,x')|  \leq | x  - x' | \\
                & \quad  + \Big| \int_0^t v\Big( \eta *_2 \muc_2(t)(\Phi_2(s,x)) \Big) \Big(  \r(\Phi_2(s,x)) + \Kcp*\mup_2(s)(\Phi_2(s,x))   \Big) \, \d s \\
                & \quad  - \int_0^t v\Big( \eta *_2 \muc_2(t)(\Phi_2(s,x')) \Big) \Big(  \r(\Phi_2(s,x')) + \Kcp*\mup_2(s)(\Phi_2(s,x'))   \Big) \, \d s \Big| \\
                & \leq | x  - x' | + \int_0^t \Big| v\Big( \eta *_2 \muc_2(t)(\Phi_2(s,x)) \Big) - v\Big( \eta *_2 \muc_2(t)(\Phi_2(s,x')) \Big) \Big|  |\r(\Phi_2(s,x))| \, \d s \\
                &  \quad + \int_0^t \Big| v\Big( \eta *_2 \muc_2(t)(\Phi_2(s,x)) \Big) - v\Big( \eta *_2 \muc_2(t)(\Phi_2(s,x')) \Big) \Big| |\Kcp*\mup_2(s)(\Phi_2(s,x))|   \, \d s \\
                & \quad + \int_0^t \|v\|_\infty | \r(\Phi_2(s,x)) - \r(\Phi_2(s,x')) | \, \d s \\
                & \quad + \int_0^t \|v\|_\infty | \Kcp*\mup_2(s)(\Phi_2(s,x)) - \Kcp*\mup_2(s)(\Phi_2(s,x')) | \, \d s \, .
            \end{split}
        \end{equation*}
        Reasoning similarly to {\itshape Step 2} (\ie, exploiting the Lipschitz continuity of $v$, $\eta$, $\r$, and $\Kcp$), one shows that for $x$, $x' \in \bar B_R$ and $t \in [0,T]$ 
        \begin{equation*} 
            |\Phi_2(t,x) - \Phi_2(t,x')| \leq |x - x'| + C \int_0^t |\Phi_2(s,x) - \Phi_2(s,x')| \, \d s  \, ,
        \end{equation*}
         which by Gr\"onwall's inequality yields $|\Phi_2(t,x) - \Phi_2(t,x')| \leq |x - x'| e^{Ct}$  for $x$, $x' \in \bar B_R$ and $t \in [0,T]$ and, in particular, 
        \begin{equation}\label{eq:30111602}
            |\Phi_2(t,x) - \Phi_2(t,x')| \leq C |x - x'|  \, ,
        \end{equation}
        for $x$, $x' \in \bar B_R$, where the constant $C$ depends also on $T$.

        Putting together~\eqref{eq:inequality with Phi}, \eqref{eq:2211301551}, and~\eqref{eq:30111602}, we conclude that~\eqref{eq:2211301645} holds true. 

        \step{5} (Estimate of $\W_1(\muc_1(t), \muc_2(t))$) Let $\gamma \in \Pcal(\R^2\x\R^2)$ be an optimal transport plan satisfying $\pi^i_\# \gamma = \muc_i(0)$ and
        \begin{equation} \label{eq:optimal gamma}
            \W_1(\muc_1(0), \muc_2(0)) = \int_{\R^2 \x \R^2} |x - x'| \, \d \gamma(x,x') \, .
        \end{equation}
        We observe that since $\muc_1(0)$ and $\muc_2(0)$ have both supports contained in the closed ball $\bar B_R$, the measure $\gamma$ is also concentrated on a set contained in $\bar B_R \x \bar B_R$, see~\cite[Theorem~5.10-(ii)-(e)]{Vil}. We consider the map $(x,x') \mapsto (\Phi_1(t,\pi^1(x,x')),\Phi_2(t,\pi^2(x,x')))$ and the transport plan $(\Phi_1(t,\pi^1),\Phi_2(t,\pi^2))_\# \gamma$, observing that it has marginals $\muc_1(t)$ and $\muc_2(t)$ since we have that $\pi^i_\# (\Phi_1(t,\pi^1),\Phi_2(t,\pi^2))_\# \gamma = \Phi_i(t,\cdot)_\# \pi^i_\# \gamma = \Phi_i(t,\cdot)_\# \muc_i(0) = \muc_i(t)$. From~\eqref{eq:2211301645} and~\eqref{eq:optimal gamma} it follows that 
        \[
            \begin{split}
                & \W_1(\muc_1(t), \muc_2(t))  \leq \int_{\R^2 \x \R^2} |X - X'| \, \d \Big( (\Phi_1(t,\pi^1),\Phi_2(t,\pi^2))_\# \gamma \Big) (X,X') \\
                & \quad = \int_{\bar B_R \x \bar B_R} |\Phi_1(t,x) - \Phi_2(t,x')| \, \d \gamma(x,x') \\
                & \quad \leq  C  \int_{\bar B_R \x \bar B_R} |x - x'| \, \d \gamma(x,x')  + C \E(\|\bar Y_1 - \bar Y_2\|_\infty) + C \int_0^t  \W_1( \muc_1(s),  \muc_2(s)) \, \d s   \\
                & \quad =  C  \W_1(\muc_1(0), \muc_2(0)) + C \E(\|\bar Y_1 - \bar Y_2\|_\infty) + C \int_0^t  \W_1( \muc_1(s),  \muc_2(s)) \, \d s  \, .
            \end{split}
        \] 
        By Gr\"onwall's inequality 
        \begin{equation*}
            \begin{split}
                \W_1(\muc_1(t), \muc_2(t))  & \leq  C e^{Ct} \Big( \W_1(\muc_1(0), \muc_2(0))   + \E(\|\bar Y_1 - \bar Y_2\|_\infty) \Big)  \\
                & \leq C \Big( \W_1(\muc_1(0), \muc_2(0))   + \E(\|\bar Y_1 - \bar Y_2\|_\infty) \Big) 
            \end{split}
        \end{equation*}
        for $t \in [0,T]$, where the constant $C$ also depends on $T$. This concludes the proof of~\eqref{eq:aimed Gronwall W1} and of the theorem.
    \end{proof}

    \begin{proposition} \label{prop:PDE formulation of PDE/SDE/ODE}
        Under the assumptions of Theorem~\ref{thm:N to infty}, the curve $\muc \in C^0([0,T];\Pcal_1(\R^2))$, the law $\mup \in \Pcal_1(C^0([0,T];\R^2))$, and the curve $Z = (Z_1,\dots,Z_L)$ from~\eqref{eq:limit PDE/SDE/ODE} are solutions to the ODE/PDE/ODE system: 
        \begin{equation}  \label{eq:PDE/SDE/ODE}
            \left\{
                \begin{aligned}
                    & \de_t \muc + \div_x \Big( v\big( \eta *_2 \muc \big) \big(  \r + \Kcp*\mup   \big) \muc \Big) = 0 \, ,\\  
                    & \de_t \mup - \kappa \Delta_y \mup + \div_y\Big( \Big( \frac{1}{L} \sum_{\ell=1}^L \Kpg(\cdot - Z_\ell(t)) -  \Kpc*\muc  \Big)\mup \Big)  = 0 \, ,\\
                    & \d Z_\ell(t)  = \Big( \frac{1}{L}\sum_{\ell'=1}^L \Kgg(Z_\ell(t) - Z_{\ell'}(t) )  + u_\ell(t)  \Big) \d t \, ,  \\ 
                    & \muc(0) = \muc_0 \, , \\
                    & \mup(0)  = \mup_0 \, , \\
                    & Z_\ell(0) =  Z_\ell^0 \, ,  \  \ell = 1,\dots, L \, , 
                \end{aligned}
            \right.
        \end{equation}
        where the PDEs are understood in the sense of distributions and $\mup_0$ is the law of $\bar Y^0$.
    \end{proposition}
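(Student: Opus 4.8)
The plan is to notice that of the three equations in~\eqref{eq:PDE/SDE/ODE} two have already been obtained, so that the only genuinely new content is the parabolic equation for $\mup$, which is a transcription of the argument proving Proposition~\ref{prop:PDE formulation of ODE/SDE/ODE}. Indeed, the transport equation $\de_t \muc + \div_x ( v( \eta *_2 \muc ) (  \r + \Kcp*\mup   ) \muc ) = 0$ with $\muc(0) = \muc_0$ is precisely~\eqref{eq:PDE muc}, whose distributional formulation~\eqref{eq:muc distributional solution} was established in the course of the proof of Theorem~\ref{thm:N to infty}; and the ODE for $Z$ together with $Z_\ell(0) = Z_\ell^0$ is literally the third line of~\eqref{eq:limit PDE/SDE/ODE}. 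Thus it remains only to show that $\mup = \Law(\bar Y) \in \Pcal_1(C^0([0,T];\R^2))$ is a distributional solution, in the sense analogous to the footnote of Proposition~\ref{prop:PDE formulation of ODE/SDE/ODE}, of
\[
\de_t \mup - \kappa \Delta_y \mup + \div_y\Big( \Big( \frac{1}{L} \sum_{\ell=1}^L \Kpg(\cdot - Z_\ell(t)) -  \Kpc*\muc(t)  \Big)\mup \Big)  = 0 \, , \qquad \mup(0) = \mup_0 \, .
\]

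Next I would recall from~\eqref{eq:limit PDE/SDE/ODE} that $(\bar Y(t))_{t\in[0,T]}$ is the strong solution of $\d \bar Y(t) = b(t,\bar Y(t))\,\d t + \sqrt{2\kappa}\,\d W(t)$ with drift $b(t,Y) := \frac{1}{L}\sum_{\ell=1}^L \Kpg(Y - Z_\ell(t)) - \Kpc*\muc(t)(Y)$, the same SDE solved in Substep~4.1 of the proof of Theorem~\ref{thm:N to infty}; there it was already checked that $b$ is continuous in $t$, globally Lipschitz in $Y$, and of linear growth, and that $\E(\|\bar Y\|_\infty)<+\infty$ by~\eqref{eq:bound on bar Y}, the latter giving $\mup\in\Pcal_1(C^0([0,T];\R^2))$. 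Then I would fix $\xi\in C^\infty_c((-\infty,T)\x\R^2)$ and apply It\^o's formula to $(\xi(t,\bar Y(t)))_{t\in[0,T]}$ exactly as in the proof of Proposition~\ref{prop:PDE formulation of ODE/SDE/ODE}: write $\xi(t,\bar Y(t))$ as an It\^o process with a drift term involving $\de_t\xi + \kappa\Delta_y\xi + b\cdot\nabla_y\xi$ and martingale part $\int_0^t \nabla_y\xi(s,\bar Y(s))\cdot\d W(s)$, take expectations, use that this stochastic integral has zero expectation (its integrand is bounded since $\nabla_y\xi$ is), use $\xi(T,\cdot)\equiv 0$ and $\mup(t)=\Law(\bar Y(t))$ to rewrite each expectation as an integral against $\mup(t)$, and arrive at the desired distributional identity
\[
\begin{split}
& \int_{\R^2} \xi(0,y)\,\d \mup_0(y) + \int_0^T \!\! \int_{\R^2} \Big[ \de_t \xi(t,y) + \kappa \Delta_y \xi(t,y) \\
& \quad + \Big( \frac{1}{L}\sum_{\ell=1}^L \Kpg(y - Z_\ell(t)) - \Kpc*\muc(t)(y) \Big)\cdot \nabla_y \xi(t,y) \Big]\,\d\mup(t)(y)\,\d t = 0 \, .
\end{split}
\]

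The only point requiring slightly more care than in Proposition~\ref{prop:PDE formulation of ODE/SDE/ODE} is the nonlocal-in-time term $\Kpc*\muc(t)$ appearing in the drift: one must know that $t\mapsto \Kpc*\muc(t)(y)$ is continuous, uniformly for $y$ in compact sets, so that It\^o's formula applies and the time-integrals above are meaningful. This is immediate from the continuity of $t\mapsto\muc(t)$ in $\W_1$ (part of the conclusion of Theorem~\ref{thm:N to infty}) together with the Lipschitz continuity of $\Kpc$ and Kantorovich's duality, exactly the estimate~\eqref{eq:Kpcmuct Lipschitz}; moreover $\Kpc*\muc(t)(\cdot)$ and $\Kpg(\cdot-Z_\ell(t))$ have at most linear growth and hence are bounded, together with $\Delta_y\xi$, on the ($t$-uniform, compact) support of $\xi$, so the dominated-convergence and vanishing-expectation steps go through verbatim. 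I therefore expect no substantive obstacle: the proof is essentially a direct adaptation of the It\^o argument already carried out for $\mubp$, with the time-regularity of $\Kpc*\muc$ being the single genuine check, and that regularity is already on record.
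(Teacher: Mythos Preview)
Your proposal is correct and follows exactly the paper's approach: the paper's proof simply states that the argument consists in deriving the PDE for $\mup$ via It\^o's lemma, \emph{mutatis mutandis} from Proposition~\ref{prop:PDE formulation of ODE/SDE/ODE}. You have in fact spelled out more detail than the paper does (in particular the observation that the transport equation for $\muc$ and the ODE for $Z$ are already on record, and the continuity of $t\mapsto \Kpc*\muc(t)$ via~\eqref{eq:Kpcmuct Lipschitz}), but the substance is identical.
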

    \begin{proof}
        The proof consists in deriving the PDE solved by $\mup$ using It\^{o}'s lemma and is obtained as in the proof of Proposition~\ref{prop:PDE formulation of ODE/SDE/ODE} {\itshape mutatis mutandis}. 
    \end{proof}

    \begin{theorem}  \label{thm:uniqueness PDE/PDE/ODE}
        Under the assumptions of Theorem~\ref{thm:N to infty} and further assuming that:
        \begin{itemize}
            \item $\mup_0 \in \Pcal_2(\R^2)$;
            \item $\mup_0$ has finite entropy, \ie,  $\mup_0 = \rhop_0(x) \, \d x$ for some function $\rhop_0 \in L^1(\R^2)$ satisfying $\int_{\R^2} \rhop_0(x) \log(\rhop_0(x)) \, \d x < +\infty$;
            \item $\mup_0 = \Law(\tilde Y^0)$;
        \end{itemize}
        the solution to~\eqref{eq:PDE/SDE/ODE} is unique.   
    \end{theorem}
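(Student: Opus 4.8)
The plan is to deduce the uniqueness for~\eqref{eq:PDE/SDE/ODE} from the uniqueness already established for~\eqref{eq:limit PDE/SDE/ODE} in Theorem~\ref{thm:uniqueness of PDE/SDE/ODE}: under the additional hypotheses, I want to show that every distributional solution of~\eqref{eq:PDE/SDE/ODE} is induced by a solution of~\eqref{eq:limit PDE/SDE/ODE}. Since the ODE for $Z$ is decoupled, $Z$ is unique, and I fix it. Let $(\muc,\mup,Z)$ be a solution of~\eqref{eq:PDE/SDE/ODE}. First I would record the needed a priori regularity: the velocity field $v(\eta*_2\muc)(\r+\Kcp*\mup)$ is Lipschitz in space and continuous in time, so by its characteristic flow (as in the proof of Theorem~\ref{thm:uniqueness of PDE/SDE/ODE}, via~\cite[Theorem~5.34]{Vil2}) one has $\muc(t)=\Phi(t,\cdot)_\#\muc_0$, whence $\supp\muc(t)\subset\bar B_R$ for every $t\in[0,T]$ with $R$ depending only on $R_0$, $T$, and the data, and $t\mapsto\muc(t)$ is $\W_1$-Lipschitz. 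Consequently the drift $b(t,y):=\frac1L\sum_{\ell=1}^L\Kpg(y-Z_\ell(t))-\Kpc*\muc(t)(y)$ is continuous in $t$, globally Lipschitz in $y$ with constant uniform in $t$, and of linear growth; by Proposition~\ref{prop:SDE simple} the SDE $\d\bar Y(t)=b(t,\bar Y(t))\,\d t+\sqrt{2\kappa}\,\d W(t)$ with $\bar Y(0)=\tilde Y^0$ has a unique strong solution, and $\E(\|\bar Y\|_\infty)<+\infty$ since $\mup_0\in\Pcal_2(\R^2)\subset\Pcal_1(\R^2)$ gives $\E(|\tilde Y^0|)<+\infty$.

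The crux is the identification $\mup(t)=\Law(\bar Y(t))$ for all $t\in[0,T]$. On the one hand, by It\^o's formula — exactly as in the proof of Proposition~\ref{prop:PDE formulation of ODE/SDE/ODE} — the curve $t\mapsto\Law(\bar Y(t))$ is a distributional solution of the \emph{linear} Fokker--Planck equation $\de_t\mu-\kappa\Delta_y\mu+\div_y(b\mu)=0$ with $\mu(0)=\mup_0$. On the other hand, $\mup$ is by assumption also such a solution. It therefore suffices to prove uniqueness for this linear equation in the class of curves of probability measures, and this is exactly where the hypotheses $\mup_0\in\Pcal_2(\R^2)$ and $\int_{\R^2}\rhop_0\log\rhop_0\,\d x<+\infty$ enter: since $\div_y b$ is bounded (as $\Kpg,\Kpc$ are Lipschitz), the finite-entropy bound for the Fokker--Planck equation propagates in time, so any such solution is absolutely continuous, $\mu(t)=\rho(t)\,\d x$ with $\rho\in C^0([0,T];\Pcal_2(\R^2))$ and $\rho(t)\in L^1\log L^1(\R^2)$; then, because $b\in L^\infty([0,T];W^{1,\infty}(\R^2))$ and $\sigma=\sqrt{2\kappa}\,\Id_2$ is non-degenerate, uniqueness follows from standard linear parabolic theory (a duality argument against the backward parabolic equation, or an $L^2$ energy estimate after the instantaneous parabolic regularization of $L^1\log L^1$ data). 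Equivalently, one may invoke a superposition principle to conclude directly that any such $\mu$ coincides with the law of the SDE with drift $b$.

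Once $\mup=\Law(\bar Y)\in\Pcal_1(C^0([0,T];\R^2))$ is established, the triple $(\muc,\bar Y,Z)$ solves the coupled system~\eqref{eq:limit PDE/SDE/ODE} with $\bar Y^0:=\tilde Y^0$ and Brownian motion $W$: the conservation law for $\muc$ in~\eqref{eq:PDE/SDE/ODE} becomes that of~\eqref{eq:limit PDE/SDE/ODE} after replacing $\mup$ by $\Law(\bar Y)$; $\bar Y$ solves the SDE by construction; and $Z$ together with all initial conditions match. Applying Theorem~\ref{thm:uniqueness of PDE/SDE/ODE} to two solutions of~\eqref{eq:PDE/SDE/ODE} — reconstructed with the same $\tilde Y^0$ and the same $W$ — yields that $\muc$, $\bar Y$ (hence $\mup=\Law(\bar Y)$), and $Z$ coincide, so~\eqref{eq:PDE/SDE/ODE} has a unique solution; its existence being already guaranteed by Theorem~\ref{thm:N to infty} and Proposition~\ref{prop:PDE formulation of PDE/SDE/ODE}.

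The main obstacle is precisely the identification $\mup=\Law(\bar Y)$, i.e.\ the uniqueness for the linear Fokker--Planck equation starting from a datum that is merely in $\Pcal_2(\R^2)$ with finite entropy; all the rest is either decoupled (the ODE for $Z$), a routine a priori estimate (compact support and $\W_1$-continuity of $\muc$, well-posedness of the auxiliary SDE via Proposition~\ref{prop:SDE simple}), or an immediate consequence of results already proven (Theorem~\ref{thm:uniqueness of PDE/SDE/ODE} and the It\^o computation of Proposition~\ref{prop:PDE formulation of ODE/SDE/ODE}).
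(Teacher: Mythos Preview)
Your proposal is correct and follows essentially the same architecture as the paper's proof: fix $Z$, reduce to uniqueness of the \emph{linear} Fokker--Planck equation for $\mup$ with $\muc$ frozen, identify $\mup=\Law(\bar Y)$ via It\^o's formula applied to the auxiliary SDE, and then invoke Theorem~\ref{thm:uniqueness of PDE/SDE/ODE} for the PDE/SDE/ODE system. The only substantive difference is in how the linear Fokker--Planck uniqueness is justified: the paper appeals directly to \cite[Theorem~3.3]{BogDPRocSta}, checking the hypotheses $A=\mathrm{Id}_2$, $|b(t,y)\cdot y|\leq C(1+|y|^2)$, $\mup_0\in\Pcal_2(\R^2)$ with finite entropy, and weak attainment of the initial datum; you instead sketch the mechanism behind such a result (entropy propagation under bounded $\div_y b$, followed by a duality/energy argument or a superposition principle). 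Your sketch is morally what the cited theorem encapsulates, so the two routes coincide; just be aware that the entropy-propagation step for an \emph{a priori} weak measure solution is itself the delicate part and is precisely what \cite{BogDPRocSta} supplies.
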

    \begin{proof} 
        \step{1} Let us fix $\muc \in C^0([0,T];\Pcal_1(\R^2))$ and $Z = (Z_1,\dots, Z_L) \in C^0([0,T];\R^2)$ solution to the ODE in~\eqref{eq:PDE/SDE/ODE} involving $Z$. We start by observing that the PDE 
        \begin{equation}  \label{eq:PDE mup}
            \left\{
                \begin{aligned}
                    & \de_t \mup - \kappa \Delta_y \mup + \div_y\Big( \Big( \frac{1}{L} \sum_{\ell=1}^L \Kpg(\cdot - Z_\ell(t)) -  \Kpc*\muc  \Big)\mup \Big)  = 0 \, ,\\
                    & \mup(0)  = \mup_0 \, , \\
                \end{aligned}
            \right.
        \end{equation}
        has at most one solution $\mup \in \Pcal_1(C^0([0,T];\bar B_R))$, where $\bar B_R$ is a closed ball of radius $R>0$. As done in~\cite[Theorem~3.7]{AscCasSol}, we apply the result \cite[Theorem~3.3]{BogDPRocSta}. To check all the conditions, let us write the PDE using the same notation of \cite{BogDPRocSta}. Let $A(t,y) = \mathrm{Id}_2$ and $b(t,y) =  \frac{1}{L} \sum_{\ell=1}^L \Kpg(y - Z_\ell(t)) -  \Kpc*\muc(t)(y)$. Let us define the operator $\mathscr{L} \xi  =  \de_t \xi + \mathrm{tr}(A\nabla^2 \xi) + b \cdot \nabla \xi$. If $\mup \in \Pcal_1(C^0([0,T];\R^2))$ solves~\eqref{eq:PDE mup}, then it is a Radon measure\footnote{Indeed, $\mup$ can be seen as a Radon measure on $(0,T)\x\R^2$, since the duality $\xi \in C^0_c((0,T)\x \R^2) \mapsto \int_0^T \int_{\R^2} \xi(t,y) \, \mup(t)(y) \, \d t$ is a linear and continuous operator, \cf~\cite[Corollary~1.55]{AmbFusPal}.} on $(0,T) \x \R^2$ solving $\mathscr{L}^* \mup = 0$, \ie, $\int_{(0,T)\x \R^2} \mathscr{L} \xi \, \d \mup = 0$ for every $\xi \in C^\infty_c((0,T) \x \R^2)$. Trivially, $A$ is bounded and Lipschitz in the $y$ variable. By the Lipschitz continuity of $\Kpg$ and the boundedness of $Z$,
        \[
            \Big|   \frac{1}{L} \sum_{\ell=1}^L \Kpg(y - Z_\ell(t))\Big| \leq C\frac{1}{L} \sum_{\ell=1}^L|y - Z_\ell(t)| \leq C |y| + C \|Z\|_\infty \leq C(1+|y|)\,.
        \]
        Moreover, by the Lipschitz continuity of $\Kpc$,
        \[
            |\Kpc*\muc(t)(y)| \leq \int_{\R^2}   |\Kpc(y - x)|  \, \d \muc(t)(x) \leq \int_{\R^2} C \Big( |y| + |x| \Big) \, \d \muc(t)(x) \leq C(1+|y|) \, ,
        \]
        where the constant $C$ also depends on $R$. In conclusion, 
        \[
        |b(t,y) \cdot y| \leq C (1 + |y|^2) \, .    
        \]
        By the assumption $\mup_0 \in \Pcal_2(\R^2)$ we have that $\int_{\R^2} |y|^2 \, \d \mup_0(y) < +\infty$. Finally, the initial condition is attained  also in the sense
        \[
        \lim_{t \to 0} \int_{\R^2} \xi(y) \, \d \mup(t)(y) =   \int_{\R^2} \xi(y) \, \d \mup_0(y) \, ,
        \]
        since $t \in [0,T] \mapsto \int_{\R^2} \xi(y) \, \d \mup(t)(y)$ is a continuous function, \cf~also Footnote~\ref{footnote:distributional solution}.

        By~\cite[Theorem~3.3]{BogDPRocSta} we conclude that there is at most one family $(\mup(t))_{t \in [0,T]}$ that solves~\eqref{eq:PDE mup}.

        \step{2} Let now $\muc_i \in C^0([0,T];\Pcal_1(\R^2))$, $\mup_i \in \Pcal_1(C^0([0,T];\R^2))$, $i = 1,2$ (and $Z = (Z_1,\dots,Z_L) \in C^0([0,T];\R^2)$) be two solutions of~\eqref{eq:PDE/SDE/ODE}, \ie, for $i = 1,2$
        \begin{equation} \label{eq:2212041948}
            \left\{
                \begin{aligned}
                    & \de_t \muc_i + \div_x \Big( v\big( \eta *_2 \muc_i \big) \big(  \r + \Kcp*\mup_i   \big) \muc_i \Big) = 0 \, ,\\   
                    & \de_t \mup_i - \kappa \Delta_y \mup_i + \div_y\Big( \Big( \frac{1}{L} \sum_{\ell=1}^L \Kpg(\cdot - Z_\ell(t)) -  \Kpc*\muc_i  \Big)\mup_i \Big)  = 0 \, ,\\
                    & \d Z_\ell(t)  = \Big( \frac{1}{L}\sum_{\ell'=1}^L \Kgg(Z_\ell(t) - Z_{\ell'}(t) )  + u_\ell(t)  \Big) \d t \, ,  \\ 
                    & \muc_i(0) = \muc_0 \, , \\
                    & \mup_i(0)  = \mup_0 \, , \\
                    & Z_\ell(0) =  Z_\ell^0 \, ,  \  \ell = 1,\dots, L \, . \\
                \end{aligned}
            \right.
        \end{equation}
        Let $\bar Y^0$ be a $\R^2$-valued random variable with law $\mup_1(0) = \mup_2(0) = \mup_0$ and let us consider for $i=1,2$ the solutions to 
        \begin{equation} \label{eq:2212041951}
            \left\{
                \begin{aligned}
                    & \d \bar Y_i(t) = \Big(\frac{1}{L} \sum_{\ell=1}^L \Kpg( \bar Y_i(t) - Z_\ell(t))  -   \Kpc*\muc_i(t)(\bar Y_i(t)) \Big) \d t  + \sqrt{2\kappa} \, \d W(t) \, ,   \\
                    & \bar Y_i(0)  = \bar Y^0 \ \text{a.s.,}
                \end{aligned}
                \right.    
        \end{equation}
        which exist and are unique by {\itshape Substep 4.1} in the proof of Theorem~\ref{thm:N to infty}. Let us set $\mubp_i = \Law(\bar Y_i)$. (Notice the temporary difference between $\mubp_i$ and $\mup_i$.) By deriving the PDE solved by the law $\mubp_i$ using It\^{o}'s lemma (see the proof of Proposition~\ref{prop:PDE formulation of ODE/SDE/ODE}), we obtain that for $i=1,2$
        \[
                \left\{
                    \begin{aligned}
                        & \de_t \mubp_i - \kappa \Delta_y \mubp_i + \div_y\Big( \Big( \frac{1}{L} \sum_{\ell=1}^L \Kpg(\cdot - Z_\ell(t)) -  \Kpc*\muc_i  \Big)\mubp_i \Big)  = 0 \, ,\\
                        & \mubp_i(0)  = \mup_0 \, , \\
                    \end{aligned}
                \right.
        \]
        However, by~\eqref{eq:2212041948} and by the uniqueness proven in {\itshape Step 1}, we deduce that $\mubp_i = \mup_i$ for $i = 1,2$. Combining~\eqref{eq:2212041948} and~\eqref{eq:2212041951} we obtain that, for $i=1,2$
        \[
            \left\{
                \begin{aligned}
                    & \de_t \muc_i + \div_x \Big( v\big( \eta *_2 \muc_i \big) \big(  \r + \Kcp*\mup_i   \big) \muc_i \Big) = 0 \, ,\\   
                    & \d \bar Y_i(t) = \Big(\frac{1}{L} \sum_{\ell=1}^L \Kpg( \bar Y_i(t) - Z_\ell(t))  -   \Kpc*\muc_i(t)(\bar Y_i(t)) \Big) \d t  + \sqrt{2\kappa} \, \d W(t) \, ,\\
                    & \frac{\d Z_\ell}{\d t}(t)  =  \frac{1}{L}\sum_{\ell'=1}^L \Kgg(Z_\ell(t) - Z_{\ell'}(t) )  + u_\ell(t) \, ,  \\ 
                    & \muc_i(0) = \muc_0 \, , \\
                    & \bar Y_i(0)  = \bar Y^0 \ \text{a.s.,} \quad \mup_i = \Law(\bar Y_i) \, , \\
                    & Z_\ell(0) =  Z_\ell^0 \, ,  \  \ell = 1,\dots, L \, , \\ 
                \end{aligned}
            \right.
        \]
        By Theorem~\ref{thm:uniqueness of PDE/SDE/ODE}, the problem above has a unique solution, hence $\muc_1 = \muc_2$ and $\bar Y_1 = \bar Y_2$ a.s., yielding $\mup_1 = \Law(\bar Y_1) = \Law(\bar Y_2) = \mup_2$. This concludes the proof. 
    \end{proof}  

    \begin{remark} \label{rmk:absolutely continuous}
        Thanks to Theorem~\ref{thm:uniqueness of PDE/SDE/ODE}, if there exist absolutely continuous solutions to~\eqref{eq:PDE/SDE/ODE}, then $\muc$ and $\mup$ are {\itshape a fortiori} absolutely continuous by uniqueness. Under suitable conditions, the solutions are, in fact, absolutely continuous. 

        If $\muc(0) = \rhoc_0 \, \d x$, by~\cite[Theorem~2]{PicRos2} the measure $\muc(t)$ is absolutely continuous for all $t \in [0,T]$, hence there exists a density $\rhoc(t) \in L^1(\R^2)$ such that $\muc(t) = \rhoc(t) \, \d x$. This is a consequence of the Lagrangian representation of the solution to the non-local transport equation. 

        By \cite[Theorem~2.5]{BogDPRocSta} (see also~\cite[Corollary~3.9]{BogKryRoc}) there exists a function $\rhop$ locally H\"older continuous on $(0,T) \x \R^2$ such that $\mup = \rhop(t,y)  \, \d t \, \d y$.

        In conclusion,
        \begin{equation*} 
            \left\{
                \begin{aligned}
                    & \de_t \rhoc + \div_x \Big( v\big( \eta *_2 \rhoc \big) \big(  \r + \Kcp*\rhop   \big) \rhoc \Big) = 0 \, ,\\  
                    & \de_t \rhop - \kappa \Delta_y \rhop + \div_y\Big( \Big( \frac{1}{L} \sum_{\ell=1}^L \Kpg(\cdot - Z_\ell(t)) -  \Kpc*\rhoc  \Big)\rhop \Big)  = 0 \, ,\\
                    & \frac{\d Z_\ell}{\d t}(t)  =  \frac{1}{L}\sum_{\ell'=1}^L \Kgg(Z_\ell(t) - Z_{\ell'}(t) )  + u_\ell(t)  \, ,  \\ 
                    & \rhoc(0) = \rhoc_0 \, , \\
                    & \rhop(t) \, \d y  \weak \mup_0 \text{ as } t \to 0 \, , \\
                    & Z_\ell(0) =  Z_\ell^0 \, ,  \  \ell = 1,\dots, L \, . 
                \end{aligned}
            \right.
        \end{equation*}
    \end{remark}

    \subsection{Limit of optimal control problems as $N \to + \infty$} Let us consider the following cost functional for the limit problem obtained in~\eqref{eq:limit PDE/SDE/ODE}.  Let  $\J \colon L^\infty([0,T]; \U) \to \R$ be defined for every $u \in L^\infty([0,T];\U)$ by 
    \begin{equation} \label{def:J}
        \J(u) :=   \frac{1}{2}   \int_0^T    |u(t)|^2 \d t +  \int_0^T  \int_{\R^2 \x \R^2} \Hd(x - y) \, \d \muc(t) \x \mup(t)(x,y) \, \d t  \, , 
    \end{equation}
    where $\muc \in C^0([0,T];\Pcal_1(\R^2))$, $(\bar Y(t))_{t \in [0,T]}$ are obtained by  Theorem~\ref{thm:N to infty} as the unique solution to~\eqref{eq:limit PDE/SDE/ODE} and $\mup$ is the law of $(\bar Y(t))_{t \in [0,T]}$.

    \begin{theorem} \label{thm:Gamma convergence for N}
        Under the assumptions of Theorem~\ref{thm:N to infty}, the sequence of functionals $(\J_N)_{N \geq 1}$ $\Gamma$-converges to $\J$ as $N \to +\infty$ with respect to the weak* topology in $L^\infty([0,T];\U)$.\footnote{\cf~Footnote~\ref{footnote:metrizability}.}
    \end{theorem}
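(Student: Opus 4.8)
The plan is to follow the scheme of the proof of Theorem~\ref{thm:Gamma convergence for M}, with the convergence of solutions now supplied by Theorem~\ref{thm:N to infty} instead of Theorem~\ref{thm:limit to averaged system}. Since weak* convergence in $L^\infty([0,T];\U)$ is metrizable (\cf~Footnote~\ref{footnote:metrizability}), it suffices to prove the sequential $\Gamma$-$\liminf$ and $\Gamma$-$\limsup$ inequalities. The preliminary observation is that, for a control $w \in L^\infty([0,T];\U)$ with associated averaged solution $\bar X = (\bar X_1,\dots,\bar X_N)$, $\bar Y$, $\mubp = \Law(\bar Y)$ of~\eqref{eq:ODE Y average}, setting $\nuc_N(t) := \frac{1}{N}\sum_{n=1}^N \delta_{\bar X_n(t)}$ the danger term in~\eqref{def:JN} rewrites as
\[
    \frac{1}{N}\sum_{n=1}^N \int_{\R^2} \Hd(\bar X_n(t) - y)\,\d \mubp(t)(y) = \int_{\R^2 \x \R^2} \Hd(x - y)\,\d\big( \nuc_N(t) \x \mubp(t)\big)(x,y) \, ,
\]
so that $\J_N(w) = \frac{1}{2}\int_0^T |w(t)|^2\,\d t + \int_0^T \int_{\R^2\x\R^2} \Hd(x-y)\,\d(\nuc_N(t)\x\mubp(t))(x,y)\,\d t$, and likewise $\J(u)$ carries the term $\int_0^T \int_{\R^2\x\R^2}\Hd(x-y)\,\d(\muc(t)\x\mup(t))(x,y)\,\d t$, with $\muc$ and $\mup$ as in~\eqref{eq:limit PDE/SDE/ODE}.

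The key estimate, valid for an arbitrary sequence $u^N \wstar u$, is the convergence of the danger term. Let $\nuc_N$ and $\mubp_N = \Law(\bar Y^N)$ be attached to the solution of~\eqref{eq:ODE/SDE N} with control $u^N$, and let $\muc$, $\mup = \Law(\bar Y)$, $Z$ be the limit objects of Theorem~\ref{thm:N to infty}. I would estimate, for fixed $t$,
\[
    \Big| \int_{\R^2\x\R^2}\Hd(x-y)\,\d(\nuc_N(t)\x\mubp_N(t)) - \int_{\R^2\x\R^2}\Hd(x-y)\,\d(\muc(t)\x\mup(t)) \Big| \leq A_N(t) + B_N(t) \, ,
\]
where $A_N(t) := \big| \int_{\R^2}\big( \int_{\R^2}\Hd(x-y)\,\d(\nuc_N(t)-\muc(t))(x)\big)\,\d\mubp_N(t)(y)\big|$ and $B_N(t) := \big|\int_{\R^2}\big(\int_{\R^2}\Hd(x-y)\,\d\muc(t)(x)\big)\,\d(\mubp_N(t)-\mup(t))(y)\big|$. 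Since $x \mapsto \Hd(x-y)$ is Lipschitz with constant $\Lip(\Hd)$ uniformly in $y$, Kantorovich's duality gives $A_N(t) \leq \Lip(\Hd)\,\W_1(\nuc_N(t),\muc(t))$; since $y \mapsto \int_{\R^2}\Hd(x-y)\,\d\muc(t)(x)$ is Lipschitz with the same constant, Kantorovich's duality and $\W_1(\mubp_N(t),\mup(t)) \leq \E(|\bar Y^N(t) - \bar Y(t)|) \leq \E(\|\bar Y^N - \bar Y\|_\infty)$ give $B_N(t) \leq \Lip(\Hd)\,\E(\|\bar Y^N - \bar Y\|_\infty)$. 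Integrating in $t \in [0,T]$ and using Theorem~\ref{thm:N to infty} yields the claimed convergence of the danger terms.

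With this convergence in hand, the two $\Gamma$-convergence inequalities follow exactly as in Theorem~\ref{thm:Gamma convergence for M}. For the $\Gamma$-$\liminf$ inequality, given $u^N \wstar u$ I combine the weak lower semicontinuity of the $L^2$-norm with the convergence of the danger term just shown and the superadditivity of the $\liminf$, obtaining $\J(u) \leq \liminf_{N\to+\infty} \J_N(u^N)$. For the $\Gamma$-$\limsup$ inequality, given $u$ I take the constant recovery sequence $u^N = u$, which trivially satisfies $u^N \wstar u$; the $L^2$-terms then coincide for all $N$ and the danger terms converge, so $\J_N(u) \to \J(u)$, which is even stronger than the required upper bound.

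The argument is routine once Theorem~\ref{thm:N to infty} is available, and I expect the only point needing care to be the treatment of the mixed, product-measure danger term: one has to peel off one marginal at a time, exploiting that for fixed $t$ both $x \mapsto \Hd(x-y)$ and $y \mapsto \int_{\R^2}\Hd(x-y)\,\d\muc(t)(x)$ are globally Lipschitz with the same constant, so that Kantorovich's duality applies to each factor and the two convergences $\sup_{t}\W_1(\nuc_N(t),\muc(t)) \to 0$ and $\E(\|\bar Y^N-\bar Y\|_\infty) \to 0$ control the two pieces uniformly in time. No propagation-of-chaos or compactness input is needed here; the substantive work is entirely contained in Theorem~\ref{thm:N to infty}.
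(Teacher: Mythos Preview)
Your proposal is correct and follows essentially the same approach as the paper: both split the danger term by peeling off one marginal at a time, bound the $x$-marginal piece by $\mathrm{Lip}(\Hd)\,\W_1(\nuc_N(t),\muc(t))$ via Kantorovich's duality, bound the $y$-marginal piece by $\mathrm{Lip}(\Hd)\,\E(\|\bar Y^N - \bar Y\|_\infty)$ (the paper does this directly via the expectation rather than through $\W_1(\mubp_N(t),\mup(t))$, but the estimates are equivalent), and then conclude using Theorem~\ref{thm:N to infty}, weak lower semicontinuity of the $L^2$-norm, and the constant recovery sequence.
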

    \begin{proof}
        \step{1} (Asymptotic lower bound). Let us fix a sequence of controls $(u^N)_{N \geq 1}$, $u^N \in L^\infty([0,T];\U)$ such that $u^N \wstar u$ weakly* in $L^\infty([0,T];\U)$ as $N \to +\infty$ . Let us show that 
    \begin{equation}  \label{eq:liminf inequality N}
        \J(u) \leq \liminf_{N \to +\infty} \J_{N}(u^N) \, .    
    \end{equation}
    On the one hand, by definition~\eqref{def:JN}, we have that 
    \[
        \J_N(u^N) :=   \frac{1}{2}   \int_0^T    |u^N(t)|^2 \d t + \frac{1}{N}   \sum_{n=1}^N  \int_0^T  \int_{\R^2} \Hd(\bar X^N_n(t) - y) \, \d \mubp_N(t)(y) \, \d t  \, , 
    \]
    where $\bar X^N = (\bar X^N_1,\dots, \bar X^N_N)$, $(\bar Y^N(t))_{t \in [0,T]}$, and $Z^N = (Z^N_1,\dots, Z^N_L)$ are the unique strong solution to~\eqref{eq:ODE/SDE N} and $\mubp_N$ is the law of $(\bar Y^N(t))_{t \in [0,T]}$. On the other hand, 
    \[
        \J(u) :=   \frac{1}{2}   \int_0^T    |u(t)|^2 \d t +  \int_0^T  \int_{\R^2 \x \R^2} \Hd(x - y) \, \d \muc(t) \x \mup(t)(x,y) \, \d t  \, , 
        \]
        where $\muc \in C^0([0,T];\Pcal_1(\R^2))$, $(\bar Y(t))_{t \in [0,T]}$ are obtained by the unique solution to~\eqref{eq:limit PDE/SDE/ODE} and $\mup$ is the law of $(\bar Y(t))_{t \in [0,T]}$.
 
    By the weak sequential lower semicontinuity of the $L^2$-norm, we have that 
    \[
    \int_0^T |u(t)|^2 \, \d t \leq     \liminf_{N \to +\infty} \int_0^T |u^N(t)|^2 \, \d t \, .
    \]

    Let us prove the convergence 
    \begin{equation} \label{eq:2212011243}
        \frac{1}{N}   \sum_{n=1}^N  \int_0^T  \int_{\R^2} \Hd(\bar X^N_n(t) - y) \, \d \mubp_N(t)(y) \, \d t \to \int_0^T  \int_{\R^2 \x \R^2} \Hd(x - y) \, \d \muc(t) \x \mup(t)(x,y) \, \d t \, ,
    \end{equation}
    as $N \to +\infty$. This will conclude the proof of~\eqref{eq:liminf inequality N}. 

    Setting $\cHd(w) = \Hd(-w)$ and considering the empirical measures in~\eqref{eq:nucN}, we get that
    \[
        \frac{1}{N}   \sum_{n=1}^N \Hd(\bar X^N_n(t) - y) = \cHd * \nuc_N(t)(y) \, .
    \]
    Moreover,  by Fubini's theorem
    \[
        \int_{\R^2 \x \R^2} \Hd(x - y) \, \d \muc(t) \x \mup(t)(x,y) = \int_{\R^2} \cHd*\muc(t)(y)  \, \d \mup(t)(y) \, .
    \]
    These equations allow us to estimate
    \begin{equation} \label{eq:2212011226}
        \begin{split}
            & \Big| \frac{1}{N}   \sum_{n=1}^N  \int_0^T  \int_{\R^2} \Hd(\bar X^N_n(t) - y) \, \d \mubp_N(t)(y) \, \d t - \int_0^T  \int_{\R^2 \x \R^2} \Hd(x - y) \, \d \muc(t) \x \mup(t)(x,y) \, \d t \Big|  \\
            & = \Big| \int_0^T \Big( \int_{\R^2}  \cHd * \nuc_N(t)(y) \, \d \mubp_N(t)(y) - \int_{\R^2} \cHd*\muc(t)(y)  \, \d \mup(t)(y) \Big) \, \d t \Big| \\
            & \leq  \int_0^T \int_{\R^2} | \cHd * \nuc_N(t)(y) - \cHd*\muc(t)(y)| \, \d \mubp_N(t)(y)  \, \d t \\
            & \quad + \int_0^T \Big| \int_{\R^2} \cHd*\muc(t)(y)  \, \d  \mubp_N(t)(y)  -  \int_{\R^2} \cHd*\muc(t)(y') \, \d \mup(t) (y')  \Big| \, \d t  \, .
        \end{split}
    \end{equation}
    We estimate the first term in the right-hand side of~\eqref{eq:2212011226} using the Lipschitz continuity of $\cHd(y - \cdot )$ and Kantorovich's duality by
    \[
        \begin{split}
            | \cHd * \nuc_N(t)(y) - \cHd*\muc(t)(y)| & = \Big| \int_{\R^2} \cHd(y - x) \, \d \Big(\nuc_N(t) - \muc(t)\Big)(x) \Big| \\
            & \leq C \W_1(\nuc_N(t),\muc(t)) \leq \sup_{s \in [0,T]} C \W_1(\nuc_N(s),\muc(s))\, .
        \end{split}
    \]
    For the second term in the right-hand side of~\eqref{eq:2212011226}, we observe that $\cHd*\muc(t)$ is Lipschitz continuous, as 
    \[
    |\cHd*\muc(t)(y) - \cHd*\muc(t)(y')| \leq \int_{\R^2} | \cHd(y-x) - \cHd(y'-x) | \, \d \muc(t)(x) \leq C |y - y'|    
    \]
    hence, since $\mubp_N(t)$ is the law of $\bar Y^N(t)$ and $\mup(t)$ is the law of $\bar Y(t)$, 
    \[
        \begin{split}
           &  \Big| \int_{\R^2} \cHd*\muc(t)(y)  \, \d  \mubp_N(t)(y)  -  \int_{\R^2} \cHd*\muc(t)(y') \, \d \mup(t) (y')  \Big|\\
           &  \leq \E( | \cHd*\muc(t)(\bar Y^N(t)) - \cHd*\muc(t)(\bar Y(t)) | ) \leq \E(|\bar Y^N(t) - \bar Y(t)|) \leq \E(\|\bar Y^N - \bar Y\|_\infty) \, .
        \end{split}
    \]

    Combining the previous inequalities, \eqref{eq:2212011226} reads 
    \[
        \begin{split}
            & \Big| \frac{1}{N}   \sum_{n=1}^N  \int_0^T  \int_{\R^2} \Hd(\bar X^N_n(t) - y) \, \d \mubp_N(t)(y) \, \d t - \int_0^T  \int_{\R^2 \x \R^2} \Hd(x - y) \, \d \muc(t) \x \mup(t)(x,y) \, \d t \Big|   \\
            & \leq  CT \Big( \sup_{s \in [0,T]} C \W_1(\nuc_N(s),\muc(s))  + \E(\|\bar Y^N - \bar Y\|_\infty) \Big) \to 0 \quad \text{as } N \to +\infty \, ,
        \end{split}
    \]
    where the convergence follows from Theorem~\ref{thm:N to infty}. This proves \eqref{eq:2212011243}.

    \step{2} (Asymptotic upper bound). Let us fix $u \in L^\infty([0,T];\U)$. For every $N \geq 1$, let us set $u^N = u$. As in {\itshape Step 1}, we have that 
    \[
        \J_N(u^N) :=   \frac{1}{2}   \int_0^T    |u(t)|^2 \d t + \frac{1}{N}   \sum_{n=1}^N  \int_0^T  \int_{\R^2} \Hd(\bar X^N_n(t) - y) \, \d \mubp_N(t)(y) \, \d t  \, , 
    \]
    where $\bar X^N = (\bar X^N_1,\dots, \bar X^N_N)$, $(\bar Y^N(t))_{t \in [0,T]}$, and $Z^N = (Z^N_1,\dots, Z^N_L)$ are the unique strong solution to~\eqref{eq:ODE/SDE N} corresponding to the control $u^N = u$ and $\mubp_N$ is the law of $(\bar Y^N(t))_{t \in [0,T]}$. Moreover
    \[
        \J(u) :=   \frac{1}{2}   \int_0^T    |u(t)|^2 \d t +  \int_0^T  \int_{\R^2 \x \R^2} \Hd(x - y) \, \d \muc(t) \x \mup(t)(x,y) \, \d t  \, , 
        \]
        where $\muc \in C^0([0,T];\Pcal_1(\R^2))$, $(\bar Y(t))_{t \in [0,T]}$ are obtained by the unique solution to~\eqref{eq:limit PDE/SDE/ODE} and $\mup$ is the law of $(\bar Y(t))_{t \in [0,T]}$. Trivially, we have $u^N \wstar u$, hence we deduce \eqref{eq:2212011243} once again and, in particular, the asymptotic upper bound 
    \[
     \lim_{N \to +\infty} \J_{N}(u) = \J(u) \, ,    
    \]
    concluding the proof. 
    \end{proof}

    \begin{proposition} \label{prop:existence of optimal control}
        Under the assumptions of Theorem~\ref{thm:N to infty}, there exists an optimal control $u^* \in L^\infty([0,T];\U)$, \ie,  
        \[
          \J(u^*) = \min_{u \in L^\infty([0,T];\U)} \J(u)\, .  
        \] 
    \end{proposition}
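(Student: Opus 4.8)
The plan is to reproduce verbatim the argument used in the proof of Proposition~\ref{prop:existence of optimal control N}, now invoking the $\Gamma$-convergence $\J_N \to \J$ established in Theorem~\ref{thm:Gamma convergence for N} in place of Theorem~\ref{thm:Gamma convergence for M}. The starting observation is that, since $\U$ is compact, the admissible class $L^\infty([0,T];\U)$ is sequentially compact for the weak* topology of $L^\infty([0,T];\R^{2\x L})$, which is moreover metrizable on this bounded set (\cf~Footnote~\ref{footnote:metrizability}); hence the functionals $(\J_N)_{N\geq 1}$ are trivially equicoercive and we are precisely in the setting of the fundamental theorem of $\Gamma$-convergence.

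First I would record that the assumptions of Theorem~\ref{thm:N to infty} are in force, so that the $\Gamma$-convergence statement applies, and that by Proposition~\ref{prop:existence of optimal control N} for every $N\geq 1$ there exists $u^N\in L^\infty([0,T];\U)$ with $\J_N(u^N)=\min_{v} \J_N(v)=\inf \J_N$. Being bounded in $L^\infty([0,T];\U)$, the sequence $(u^N)_{N\geq 1}$ admits a subsequence (not relabeled) and some $u^*\in L^\infty([0,T];\U)$ such that $u^N\wstar u^*$ weakly* in $L^\infty([0,T];\U)$.

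Next I would chain the two $\Gamma$-convergence inequalities. By the asymptotic lower bound (Step~1 of Theorem~\ref{thm:Gamma convergence for N}), $\J(u^*)\leq\liminf_{N\to+\infty}\J_N(u^N)=\liminf_{N\to+\infty}\inf \J_N$. On the other hand, for any competitor $v\in L^\infty([0,T];\U)$, the asymptotic upper bound realized by the constant recovery sequence $u^N=v$ (Step~2 of Theorem~\ref{thm:Gamma convergence for N}) gives $\limsup_{N\to+\infty}\inf \J_N\leq\limsup_{N\to+\infty}\J_N(v)=\J(v)$. Combining the two, $\J(u^*)\leq\J(v)$ for every $v\in L^\infty([0,T];\U)$, so $u^*$ is an optimal control.

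Since this is a standard corollary of $\Gamma$-convergence together with equicoercivity, no genuine obstacle is expected: the only points requiring care are purely bookkeeping, namely verifying that the chosen initial data meet the hypotheses of Theorem~\ref{thm:N to infty} (hence of Theorem~\ref{thm:Gamma convergence for N}), and recalling that the recovery sequence produced in the proof of Theorem~\ref{thm:Gamma convergence for N} is the constant sequence, which is exactly what makes the $\limsup$ bound immediate.
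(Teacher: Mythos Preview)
Your proposal is correct and follows essentially the same route as the paper: the paper simply states that the proof is ``completely analogous to the proof of Proposition~\ref{prop:existence of optimal control N}, as it is a general result about $\Gamma$-convergence'', and the argument you spell out (extracting a weak* limit of minimizers of $\J_N$ and using the liminf/limsup inequalities from Theorem~\ref{thm:Gamma convergence for N} with the constant recovery sequence) is exactly that analogy. The only cosmetic difference is that the paper closes the chain with the single competitor $v=u^*$, whereas you test against an arbitrary $v$; both versions are standard and equivalent.
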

    \begin{proof}
        The proof is completely analogous to the proof of Proposition~\ref{prop:existence of optimal control N}, as it is a general result about $\Gamma$-convergence.
    \end{proof}

    \section*{Declarations}

    \subsection*{Availability of data and materials} Not applicable.

    \subsection*{Competing interests} The authors declares that he has no competing interests.

    \subsection*{Funding} The author has been supported by the project ``Approccio integrato e predittivo per il controllo della criminalit\`a marittima'' in the program ``Research for Innovation'' (REFIN) - POR Puglia FESR FSE 2014-2020, Codice CUP: D94I20001410008. He is member of Gruppo Nazionale per l'Analisi Matematica, la Probabilit\`a e le loro Applicazioni (GNAMPA) of the Istituto Nazionale di Alta Matematica (INdAM) and has received funding from the GNAMPA 2022 project ``Approccio multiscala all'analisi di modelli di interazione'', Codice CUP: E55F22000270001. This work was supported by the Italian Ministry of University and Research under the Programme “Department of Excellence” Legge 232/2016 (Grant No. CUP - D93C23000100001).

    \subsection*{Authors' contributions} G.O. is the unique author of the manuscript. 

    \subsection*{Acknowledgements} The author thanks Giuseppe Maria Coclite for suggesting the problem and for interesting discussions about the model.

    \bigskip

    \bibliography{bibliography}
    \bibliographystyle{siam}

\end{document}